\numberwithin{equation}{section}
\theoremstyle{plain}
\newtheorem{theorem}{Theorem}[section]
\newtheorem{lemma}[theorem]{Lemma}
\newtheorem{proposition}[theorem]{Proposition}
\theoremstyle{definition}
\newtheorem{definition}[theorem]{Definition}
\newtheorem{remark}[theorem]{Remark}
\def\beqn{\begin{equation}}
\def\beqn*{$$}
\def\eeqn{\end{equation}}
\def\eeqn*{$$}
\newcommand{\BY}{{\bf Y}}
\newcommand{\BZ}{{\bf Z}}
\newcommand{\BV}{{\bf V}}
\newcommand{\bx}{{\bf x}}
\newcommand{\by}{{\bf y}}
\newcommand{\bz}{{\bf z}}
\def\P{\mathbb{P}}
\def\E{\mathbb{E}}
\def\Pn{\mathcal P_n}
\def\Pnp{\mathcal P_n^{\prime}}
\def\B{\mathcal B}
\def\ip{i^{\prime}}
\def\jp{j^{\prime}}
\def\hij{h^{(i,j)}}
\def\hijp{h^{(i,\jp)}}
\def\hipjp{h^{(\ip,\jp)}}
\def\hijipjp{h^{(i, j, i^{\prime}, j^{\prime})}}
\def\gij{g^{(i,j)}}
\def\gijp{g^{(i,j^{\prime})}}
\def\gipjp{g^{(i^{\prime},j^{\prime})}}
\def\Diip{D^{(i,\ip)}}
\def\muijj{\mu^{(i,j,j)}}
\def\muijjp{\mu^{(i,j,\jp)}}
\def\xiijipjp{\xi^{(i,j,\ip,\jp)}}
\def\mukt{\mu^{(k+2,1,1)}}
\def\Zij{Z^{(i,j)}}
\def\Zipjp{Z^{(\ip,\jp)}}
\newcommand{\reals}{{\mathbb R}}
\newcommand{\bbr}{\reals}
\newcommand{\bbn}{{\mathbb N}}
\newcommand{\X}{{\mathcal{X}}}
\newcommand{\Y}{{\mathcal{Y}}}
\newcommand{\Yp}{{\mathcal{Y}^{\prime}}}
\newcommand{\ellp}{{\ell^{\prime}}}
\newcommand{\one}{{\bf 1}}
\begin{document}

\bibliographystyle{abbrv}

\title[Sum of persistence barcodes]
{Limit Theorems for the Sum of Persistence Barcodes}
\author{Takashi Owada}
\address{Faculty of Electrical Engineering\\
Technion-Israel Institute of Technology \\
Haifa, 32000, Israel}
\email{takashiowada@ee.technion.ac.il}

\thanks{This research was supported by funding from the European Research Council under the European Union's
Seventh Framework Programme (FP/2007-2013) / ERC Grant Agreement n. 320422.}

\subjclass[2000]{Primary 60G70, 60F17. Secondary 60D05, 60G55, 55N35, 55U10.}
\keywords{Functional central limit theorem, Poisson limit theorem, random topology, persistent homology, Betti number. \vspace{.5ex}}

\begin{abstract}
Topological Data Analysis (TDA) refers to an approach that uses concepts from algebraic topology to study the ``shapes" of datasets.
The main 
focus of this paper is persistent homology, 
a ubiquitous tool in TDA. 
Basing our study on this, we investigate the topological dynamics of extreme sample clouds generated by a heavy tail distribution on $\bbr^d$. In particular, we establish various limit theorems for the sum of bar lengths in the persistence barcode plot, a graphical descriptor of persistent homology. It then turns out that the growth rate of the sum of the bar lengths and the properties of the limiting processes all depend on the distance of the region of interest in $\bbr^d$ from the weak core, that is, the area in which random points are placed sufficiently densely to connect with one another. If the region of interest becomes sufficiently close to the weak core, the limiting process involves a new class of Gaussian processes.
\end{abstract}

\maketitle

\section{Introduction} \label{sec:intro}

The aim of this study is to investigate the algebraic topological properties of heavy tail distributions, relying on a ubiquitous tool in Topological Data Analysis (TDA). Topological Data Analysis is a growing research area that broadly refers to the analysis of high-dimensional and incomplete datasets, using concepts from algebraic topology, while borrowing ideas and techniques from other fields in mathematics \cite{carlsson:2009}. The most typical approach to TDA is probably \textit{persistent homology}, which originated in computational topology and appears in a wide range of applications, including sensor networks \cite{desilva:ghrist:2007}, bioinformatics \cite{dabaghian:memoli:frank:carlsson:2012}, computational chemistry \cite{martin:thompson:coutsias:watson:2010}, manifold learning \cite{niyogi:smale:weinberger:2008}, and linguistics \cite{port:gheorghita:guth:clark:liang:dasu:marcolli:2015}.

A standard approach in TDA usually starts with a point cloud $\mathcal X = \{ x_1,\dots,x_n \}$ of points in $\bbr^d$, from which more complex sets are constructed. Two such examples are the union of balls $\bigcup_{i=1}^n B(x_i; t)$, where $B(x; t)$ is a closed ball of radius $t$ about the point $x$, and the  \textit{\v{C}ech complex},
$\check{C}(\mathcal{X},t)$.
\begin{definition}
\label{cech:defn}
Let $\mathcal{X}$ be a collection of points in $\bbr^d$ and $t$ be a positive number. Then, the \v{C}ech complex $\check{C}(\mathcal{X},t)$ is defined as follows.
\begin{enumerate}
\item The $0$-simplices are the points in $\mathcal{X}$.
\item A $p$-simplex $\sigma=[x_{i_0}, \dots, x_{i_p}]$ belongs to $\check{C}(\mathcal{X},t)$ whenever a family of closed balls $\bigl\{ B(x_{i_j}; t/2), \, j=0,\dots,p \bigr\}$ has a nonempty intersection.
\end{enumerate}
\end{definition}
\begin{figure}[!t]
\begin{center}
\includegraphics[width=9cm]{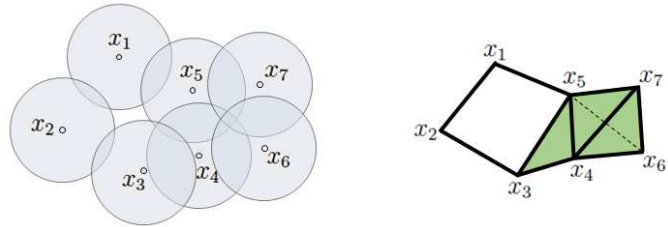}
\caption{{\footnotesize Take $\mathcal X = \{ x_1,\dots,x_7 \} \subset \bbr^2$. Since three balls with radius $t/2$ centered at $x_3, x_4, x_5$ have a common intersection, the $2$-simplex $[x_3,x_4,x_5]$ belongs to $\check{C}(\mathcal X; t)$. There also exists a $3$-simplex $[x_4,x_5,x_6,x_7]$, which adds a tetrahedron on the right figure. }}
\label{f:cech}
\end{center}
\end{figure}

In addition to the \v{C}ech complex, there are many other \textit{simplicial complexes}, such as the Vietoris-Rips and alpha complexes (see, e.g., \cite{ghrist:2014}). However, throughout the current paper, we concentrate on the \v{C}ech complex. One reason for doing so is its topological equivalence to the union of balls. Indeed, according to the Nerve theorem \cite{borsuk:1948}, the \v{C}ech complex and the union of balls are homotopy equivalent, and thus, they represent the same topological object. Furthermore, \v{C}ech complexes are regarded as higher-dimensional analogues of \textit{geometric graphs}, and therefore, many of the techniques developed thus far in random geometric graph theory (see, e.g., \cite{penrose:2003}) are also applicable to random \v{C}ech complexes.

A standard topological argument classifies objects such as \v{C}ech complexes, usually in terms of homological concepts, etc. Given a topological space $X$, the \textit{$0$-th homology group} $H_0(X)$ consists of elements that represent connected components in $X$, while for $k \geq 1$, the \textit{$k$-th homology group} $H_k(X)$ is generated by elements representing $k$-dimensional ``holes" or ``cycles" in $X$. Then, for $k \geq 0$, the \textit{$k$-th Betti number} $\beta_k(X)$ is defined as the rank of $H_k(X)$ and is the quantifier of topology that is central
 to the entire study in this paper. 
 More intuitively, $\beta_0(X)$ counts the number of connected components in $X$, while $\beta_k(X)$, $k \geq 1$, measures the number of $k$-dimensional holes or cycles in $X$. For example, a one-dimensional sphere, i.e.,\ a circle, has $\beta_0=1$, $\beta_1=1$,  and $\beta_k=0$ for all $k\geq 2$. A two-dimensional sphere has $\beta_0=1$, $\beta_1=0$, and $\beta_2=1$,  and all others  zero. In the case of a two-dimensional torus, the non-zero Betti numbers are  $\beta_0=1$, $\beta_1=2$, and $\beta_2=1$. At a more formal level, we need a rigorous coverage of homology theory (see, e.g., \cite{hatcher:2002} or \cite{vick:1994}); however, the essence of this paper can be captured without knowledge of homology theory. In the sequel, simply viewing $\beta_k(X)$ as the number of $k$-dimensional holes will suffice.
\begin{figure}[!t]
\begin{center}
\includegraphics[width=15cm]{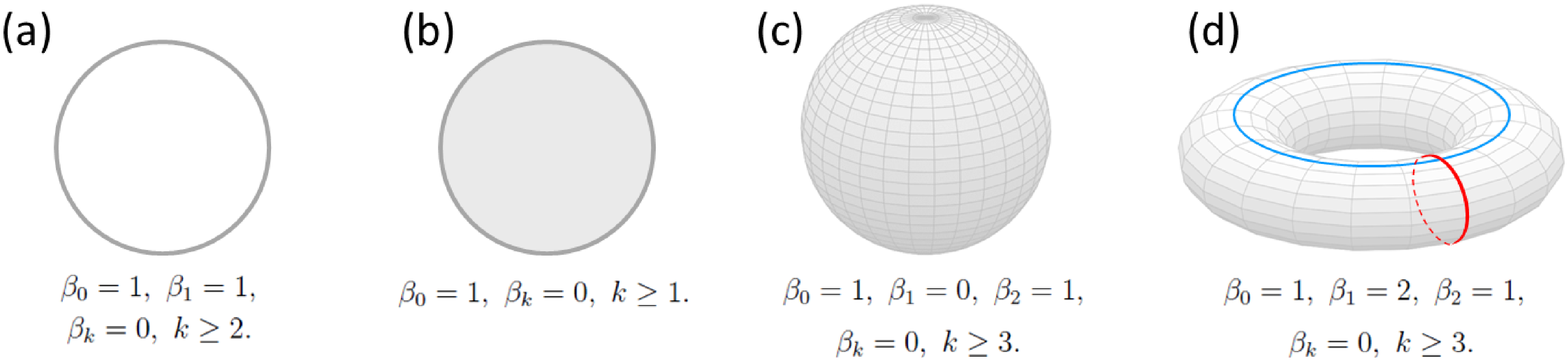}
\caption{{\footnotesize (a) One-dimensional sphere. (b) One-dimensional disk. (c) Two-dimensional sphere. (d) Two-dimensional torus. The Betti number $\beta_1$ of a two-dimensional sphere is zero; even if one winds a closed loop around the sphere, the loop ultimately vanishes as it moves upward (or downward) along the sphere until the pole. The Betti number $\beta_1$ of a two-dimensional torus is $2$ because of two independent closed loops (one is red and the other is blue). }}
\end{center}
\end{figure}

 Persistent homology keeps track of how topological features dynamically evolve in a filtered topological space. We do not give a formal description of persistent homology, but, alternatively, we present an illustrative example, which helps capture its essence. Readers interested in a more rigorous description of persistent homology may refer to \cite{edelsbrunner:letscher:zomorodian:2002}, \cite{zomorodian:carlsson:2005} and \cite{edelsbrunner:harer:2010}, while \cite{adler:bobrowski:borman:subag:weinberger:2010} and \cite{ghrist:2008} provide an elegant review of the topics in an accessible way for non-topologists. Let $\mathcal X_n = \{ X_1,\dots,X_n \}$ be a set of random points on $\bbr^d$, drawn from an unknown manifold $\mathcal M \subset \bbr^d$. First, we construct a union of balls
$$
U(t) := \bigcup_{i=1}^n B(X_i; t), \ \ \ t\geq0,
$$
which defines a random filtration generated by balls with increasing radii $t\to \infty$, that is, $U(s) \subset U(t)$ holds for all $0 \leq s \leq t$. By virtue of the Nerve theorem, this filtration conveys the same homological information as a collection of \v{C}ech complexes $\bigl\{  \check{C} (\mathcal X_n; t), \,  t\geq0 \bigr\}$. Utilizing $\bigl\{ U(t), \, t\geq0  \bigr\}$ or $\bigl\{  \check{C} (\mathcal X_n; t), \,  t\geq0 \bigr\}$, we wish to recover the homology of $\mathcal M$. We expect that, provided that $t$ is suitably chosen, the union of balls $U(t)$ is homotopy equivalent to $\mathcal M$ and hence its homology is the same as $\mathcal M$. In general, however, selecting such an appropriate $t$ is not easy at all. To make this more transparent, we consider an example for which $\mathcal M$ represents an annulus (Figure \ref{f:persistence}). In this case, if $t$ is chosen to be too small, $U(t)$ is homotopy equivalent to many distinct points, implying that we fail to recover the homology of an annulus. On the other hand, if $t$ is extremely large, then $U(t)$ becomes contractible (i.e., can deform into a single point continuously) and, once again, $U(t)$ does not recover the homology of an annulus.
\begin{figure}[!t]
\begin{center}
\includegraphics[width=11.5cm]{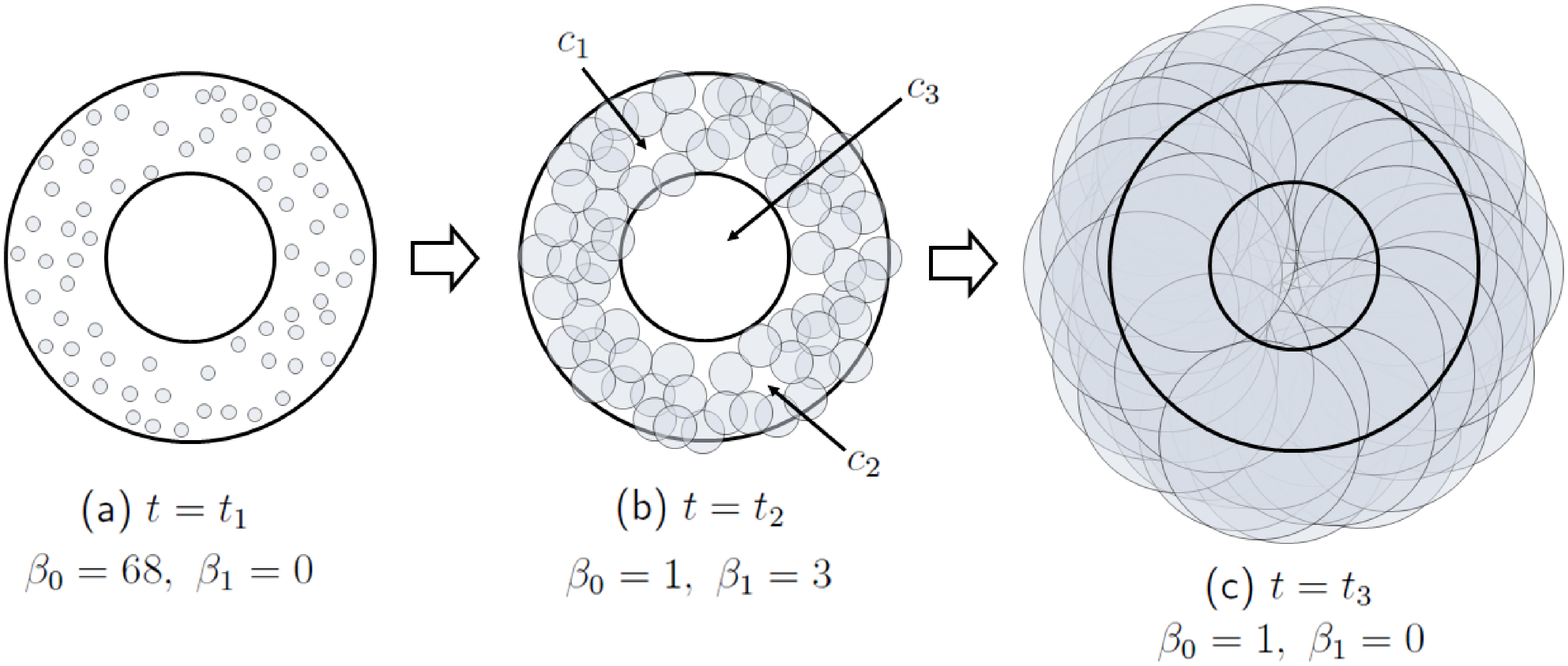}
\caption{{\footnotesize Many random points are scattered over an annulus. We increase the radius $t$ of the balls about these random points. }}
\label{f:persistence}
\end{center}
\end{figure}
\begin{figure}[!t]
\begin{center}
\includegraphics[width=12cm]{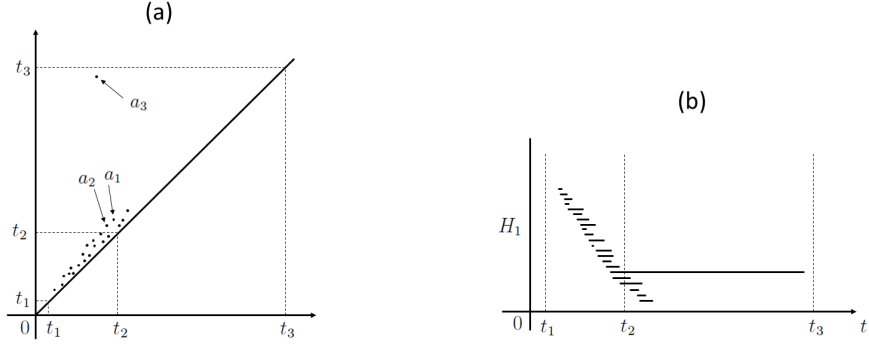}
\caption{{\footnotesize (a) Persistence diagram for the first homology group $H_1$ represented by one-dimensional holes. In Figure \ref{f:persistence}, there exist two small holes $c_1$ and $c_2$ when $t=t_2$. The lifetimes of these holes are so short that they are represented by the points $a_1$ and $a_2$ near the diagonal line. On the other hand, $c_3$ is a robust hole, and thus, the corresponding point $a_3$ is placed far from the diagonal line. (b) Persistence barcode plot for $H_1$. The vertical line at level $t_2$ intersects horizontal bars three times, meaning that there are three holes when $t=t_2$. Although two of these quickly vanish, the remaining one has the largest persistence and generates the longest bar. }}
\label{f:diagram-plot}
\end{center}
\end{figure}

 Persistent homology can extract the robust homological information of $\mathcal M$ by treating a possible range of $t$ simultaneously. Typically, persistent homology can be visualized by two equivalent graphical descriptors known as the \textit{persistence diagram} and \textit{persistence barcode plot}. The persistence diagram consists of a multiset of points in the plane $\bigl\{ (b_i, d_i): i=1,\dots, m,\ 0 \leq b_i < d_i \leq \infty \bigr\}$, where each pair $(b_i, d_i)$ describes the birth time and death time of each hole (or connected component). Alternatively, if we represent the pair $(b_i, d_i)$ as an interval $[b_i, d_i]$, we obtain a set of horizontal bars, called the persistence barcode plot.

For the annulus example in Figure \ref{f:persistence}, as we increase the radius $t$, many small one-dimensional holes appear and quickly disappear (e.g., the holes $c_1$ and $c_2$). Since the birth time and death time of these non-robust holes are close to each other, they are expressed in the persistence diagram as the points near the diagonal line (see the points $a_1$ and $a_2$ in Figure \ref{f:diagram-plot} (a)). The points near the diagonal line are usually viewed as ``topological noise." In contrast, a robust hole for the annulus denoted by $c_3$ in Figure \ref{f:persistence} has a much longer lifetime than any other small hole, and therefore, it can be represented by the point $a_3$ placed far above the diagonal line. From the viewpoint of the persistence barcode plot in Figure \ref{f:diagram-plot} (b), the hole $c_3$ generates the longest bar, whereas other small holes generate only much shorter bars.

Given a set of intervals $[b_i, d_i]$, $i=1,\dots,m$, in the persistence barcode plot for the $k$-th homology group $H_k$ (for short, we call it $k$-th persistence barcode plot), the quantity we explore in the present paper is the \textit{lifetime sum} up to parameter $t$ defined by
\begin{equation}  \label{e:def.lifetime.sum}
L_k(t) := \sum_{i=1}^m \bigl( d_i(t) - b_i(t) \bigr),
\end{equation}
where
$$
b_i(t) = \begin{cases}  b_i & \text{if }  b_i \leq t,  \\
t & \text{if } b_i > t,
\end{cases}
\qquad \qquad
d_i(t) = \begin{cases}  d_i & \text{if }  d_i \leq t,  \\
t & \text{if } d_i > t.
\end{cases}
$$
\begin{figure}[!t]
\begin{center}
\includegraphics[width=8cm]{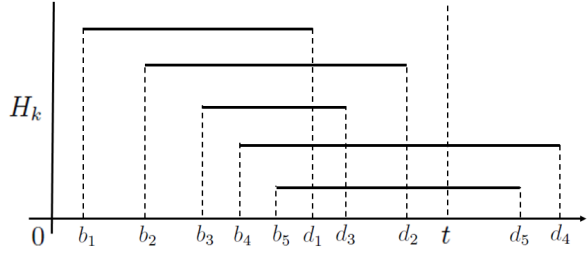}
\caption{{\footnotesize Persistence barcode plot for $H_k$. A set of horizontal bars represents the birth and death of $k$-dimensional holes. In this case, the lifetime sum up to parameter $t$ is given by $L_{k,n}(t) = \sum_{i=1}^3 (d_i - b_i) + (t-b_4) + (t - b_5)$. }}
\label{f:barcode1}
\end{center}
\end{figure}

Utilizing these topological tools developed in TDA, we investigate the topological dynamics of \textit{extreme sample clouds} lying far away from the origin, which are generated by heavy tail distributions on $\bbr^d$. The study of the geometric and topological properties of extreme sample clouds in a high-dimensional space belongs to Extreme Value Theory (EVT). Indeed, over the last decade or so, many studies have provided geometric descriptions of multivariate extremes in view of point process theory, among them \cite{balkema:embrechts:2007}, \cite{balkema:embrechts:nolde:2010}, and \cite{balkema:embrechts:nolde:2013}.  In particular, Poisson limits of point processes with a U-statistic structure were discussed in \cite{dabrowski:dehling:mikosch:sharipov:2002} and \cite{schulte:thale:2012}, the latter also including a number of stochastic geometry examples. Furthermore, in \cite{owada:adler:2016} a recent extensive study of the general point process convergence of extreme sample clouds, leading to limit theorems for Betti numbers of extremes, is reported. The main contribution in \cite{owada:adler:2016} is a probabilistic investigation into a
 layered structure consisting of a collection of ``rings" around the origin, with each ring containing extreme random points that exhibit different topological behaviors in terms of the Betti numbers. More formally, this ring-like structure is referred to as \textit{topological crackle}, which was originally reported in \cite{adler:bobrowski:weinberger:2014}. We remark also that there has been increasing interest in the limiting behaviors of random simplicial complexes, which are not necessarily related to extremes; see \cite{kahle:2011}, \cite{kahle:meckes:2013}, \cite{yogeshwaran:adler:2015},    \cite{yogeshwaran:subag:adler:2014}, and \cite{bobrowski:mukherjee:2015}. These papers derive various limit theorems for the Betti numbers of the random \v{C}ech complexes $\check{C}(\mathcal X_n; r_n)$, with $\mathcal X_n$ a random point set in $\bbr^d$ and $r_n$ a threshold radius decreasing to $0$.

The organization of this paper is as follows. First we provide a formal setup of our extreme sample clouds and express the lifetime sum of extremes as a simple functional of the corresponding Betti numbers. We observe that the nature of limit theorems for the lifetime sum of extremes depends crucially on the distance of the region of interest from the origin. The asymptotics of the lifetime sum exhibits completely different topological features according to the region examined.
The persistent homology originated in algebraic topology, and thus, there are only a limited number of probabilistic and statistical studies that have treated it. The present paper contains some of the earliest and
most comprehensive results obtained by examining persistent homology from a pure probabilistic viewpoint, two other papers being \cite{hiraoka:shirai:2015} and \cite{bobrowski:kahle:skraba:2015}. The interdisciplinary studies between statistics and persistent homology include, for example, \cite{fasy:lecci:rinaldo:wasserman:balakrishnan:singh:2014}, \cite{bubenik:2015}, and \cite{kusano:fukumizu:hiraoka:2016}. 

Before commencing the main body of the paper, we remark that all the random points in this
paper are assumed to be generated by an inhomogeneous Poisson point process on $\bbr^d$ with intensity
$nf$. In our opinion, all the limit theorems derived in this paper can be carried over to a usual iid random
sample setup by a standard ``de-Poissonization" argument; see Section 2.5 in \cite{penrose:2003}. This is, however, a little more technical and challenging, and therefore, we decided to concentrate on the simpler setup of an inhomogeneous Poisson point process. Furthermore, we consider only spherically symmetric
distributions. Although the spherical symmetry assumption is far from being crucial, we adopt it
to avoid unnecessary technicalities.

\section{Limit Theorems for the Sum of Bar Lengths}

Let $(X_i, \, i \geq 1)$ be an iid sequence of $\bbr^d$-valued random variables with common spherically symmetric density $f$ of a regularly varying tail. Let $S_{d-1}$ be the $(d-1)$-dimensional unit sphere in $\bbr^d$. Assume that for any $\theta \in S_{d-1}$ (equivalently for some $\theta \in S_{d-1}$) and for some $\alpha > d$,
\begin{equation}  \label{e:RV.tail}
\lim_{r\to \infty} f(rt\theta)/f(r\theta) = t^{-\alpha} \ \ \text{for every } t>0.
\end{equation}
Denoting by $RV_{\gamma}$ a family of regularly varying functions (at infinity) with exponent $\gamma \in \bbr$, this can be written as $f \in RV_{-\alpha}$.
Let $N_n$ be a Poisson random variable with mean $n$, independent of $(X_i)$, and $\Pn = \{ X_1,\dots,X_{N_n} \}$ denote an inhomogeneous Poisson point process on $\bbr^d$ with intensity $nf$.

Given a sequence $(R_n, \, n \geq 1)$ growing to infinity and a non-negative number $t \geq 0$, we denote by $\check{C}\bigl( \Pn \cap B(0;R_n)^c; t \bigr)$ a \v{C}ech complex built over random points in $\Pn$ lying outside a growing ball $B(0;R_n)$. Then, a family of \v{C}ech complexes
\begin{equation}  \label{e:filtration.cech}
\Bigl\{  \check{C} \bigl( \Pn \cap B(0; R_n)^c; t \bigr), \, t \geq 0  \Bigr\}
\end{equation}
constitutes a ``random filtration" parametrized by $t\geq 0$. That is, we have for all $0 \leq s \leq t$,
$$
\check{C} \bigl( \Pn \cap B(0; R_n)^c; s \bigr) \subset \check{C} \bigl( \Pn \cap B(0; R_n)^c; t \bigr).
$$

Choosing a positive integer $k \geq 1$, which remains fixed hereafter, we denote the $k$-th Betti number of the \v{C}ech complex by
$$   
\beta_{k,n}(t) := \beta_k \Bigl( \check{C}\bigl( \Pn \cap B(0;R_n)^c; t \bigr) \Bigr)
= \beta_k \biggl(\ \bigcup_{X \in \Pn \cap B(0; R_n)^c} B(X; t) \biggr),
$$
where the second equality is justified by homotopy equivalence between the \v{C}ech complex and the union of balls.
\begin{figure}[!t]
\begin{center}
\includegraphics[width=9cm]{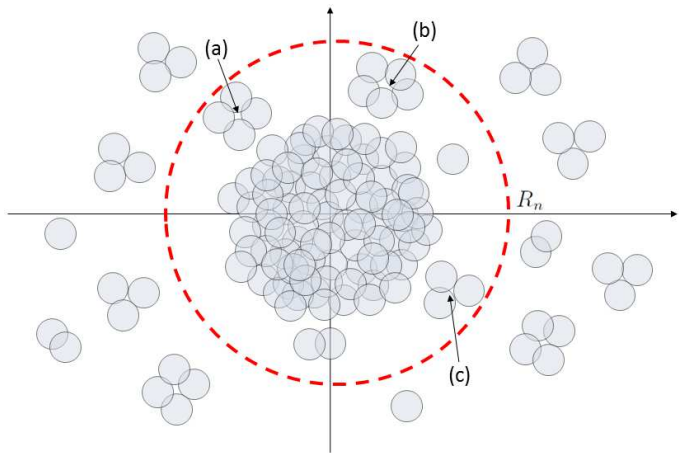}
\caption{{\footnotesize For $k=1$, $d=2$. The Betti number $\beta_{1,n}(t)$ counts one-dimensional holes outside $B(0; R_n)$, while ignoring holes inside the ball (e.g., (a), (b), and (c)). }}
\label{f:betti-cech}
\end{center}
\end{figure}

Here, we provide a key relation between the $k$-th Betti number and the lifetime sum of the $k$-th persistent homology associated with the filtration \eqref{e:filtration.cech}. Denote by $L_{k,n}(t)$ the lifetime sum in the $k$-th persistence barcode plot up to parameter $t$, as constructed in \eqref{e:def.lifetime.sum}.
 Then, it holds that
\begin{equation}  \label{e:sum.barcode}
L_{k,n}(t) = \int_0^t \beta_{k,n}(s) ds, \ \ t\geq0.
\end{equation}
The proof of \eqref{e:sum.barcode} is elementary. In the persistence barcode plot, the Betti number $\beta_{k,n}(s)$ represents the number of times the vertical line at level $s$ intersects the horizontal bars (Figure \ref{f:barcode2}). Therefore, the integration of $\beta_{k,n}(s)$ from $0$ to $t$ equals the sum of the bar lengths $L_{k,n}(t)$. For more formal proof, one may refer to Proposition 2.2 in \cite{hiraoka:shirai:2015}. Clearly, \eqref{e:sum.barcode} may be viewed as generating a stochastic process in the parameter $t\geq0$ with continuous sample paths, and its limiting properties are central to this paper, for which we derive various limit theorems in the sequel.
\begin{figure}[!t]
\begin{center}
\includegraphics[width=8cm]{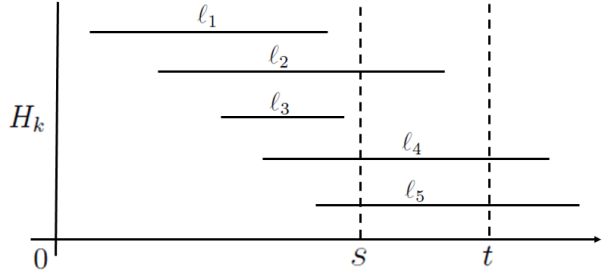}
\caption{{\footnotesize $k$-th persistence barcode plot. The lifetime sum up to parameter $t$ is $L_{k,n}(t) = \sum_{i=1}^5\ell_i$. The vertical line at level $s$ intersects the horizontal bars three times, implying that $\beta_{k,n}(s) = 3$. The integration of $\beta_{k,n}(s)$ from $0$ to $t$ coincides with $\sum_{i=1}^5\ell_i$. }}
\label{f:barcode2}
\end{center}
\end{figure}

The behavior of \eqref{e:sum.barcode} splits into three different regimes, each of which is characterized by the growth rate of $R_n$:
\begin{align*}
&(i) \ n^{k+2}R_n^d f(R_ne_1)^{k+2} \to 1, \ \ \ n\to\infty, \\
&(ii) \  n^{k+2}R_n^d f(R_ne_1)^{k+2} \to \infty,   \ \ nf(R_ne_1) \to 0, \ \ \ n\to\infty,  \\
&(iii) \ nf(R_ne_1) \to \lambda, \ \  \ n\to\infty, \text{ for some } \lambda \in (0,\infty)
\end{align*}
with $e_1 = (1,0,\dots,0) \in \bbr^d$.
Since $(R_n)$ in case $(i)$ grows fastest, the occurrence of $k$-dimensional holes outside $B(0; R_n)$ is the least likely of the three regimes. In contrast, the $R_n$ determined by $(iii)$ grows most slowly, which implies that the occurrence of $k$-dimensional holes outside $B(0; R_n)$ is the most likely of the three regimes. In the following, we establish the limit theorems for $L_{k,n}(t)$ in all three regimes.

Before proceeding to specific subsections, we need to introduce one important notion.
\begin{definition}  \label{def.weak.core}
Let $f$ be a spherically symmetric density on $\bbr^d$. A \textit{weak core} is a centered ball
$B(0; R_n^{(w)})$ such that $nf(R_n^{(w)}e_1) \to 1$ as $n \to
\infty$.
\end{definition}

Weak cores are balls, centered at the origin with growing radii as $n$ increases, in which random points are placed so densely that the balls with fixed (e.g, unit) radius about these random points become highly connected with one another and form a giant component of a geometric graph. For example, if $f$ has a power-law tail
$$
f(x) =  C/\bigl( 1+\| x \|^\alpha \bigr), \ \ x \in \bbr^d
$$
for some $\alpha > d$ and normalizing constant $C$ ($\| \cdot \|$ denotes a Euclidean norm), then the radius of a weak core is given by $R_n^{(w)} = (Cn)^{1/\alpha}$. The properties of a weak core, together with those of the related notion of a \textit{core}, were carefully explored in \cite{owada:2016} for a wide class of distributions. See also \cite{owada:adler:2016} and \cite{adler:bobrowski:weinberger:2014}. Note that the $R_n$ determined in $(iii)$ coincides with the radius of a weak core (up to multiplicative factors). Since there are essentially no holes inside the weak core, the case in which $(R_n)$ satisfies $nf(R_ne_1) \to \infty$, $n \to \infty$ is expected to lead to the same asymptotic result as that in regime $(iii)$. Therefore, all non-trivial results regarding asymptotics of $L_{k,n}(t)$ can be completely covered by regimes $(i) - (iii)$.

\subsection{Poissonian Limit Theorem in the First Regime}  \label{s:first.regime}

First, we assume that $(R_n)$ satisfies condition $(i)$, i.e.,
\begin{equation}  \label{e:Rn.regime1}
n^{k+2}R_n^d f(R_ne_1)^{k+2} \to 1, \ \ \ n\to\infty.
\end{equation}
It is then elementary to check that $(R_n)$ is a regularly varying sequence (at infinity) with exponent
$$
R_{n} \in RV_{1/\bigl( \alpha - d/(k+2) \bigr)}.
$$
Since this exponent depends on $k$, we write $R_n = R_{k,n}$ whenever it becomes an asymptotic solution to \eqref{e:Rn.regime1}. Then, the resulting \v{C}ech complex lying outside  $B(0; R_n)$ is so sparse that there appear at most finitely many $k$-dimensional holes outside $B(0; R_n)$. Hence, the occurrence of $k$-dimensional holes outside $B(0; R_n)$ is seen to be ``rare," and, consequently, the limiting process for $L_{k,n}(t)$ is expressed as a natural functional of a certain Poisson random measure.

To define the limiting process more rigorously, we need some preparation. Let
\begin{equation}  \label{e:def.ht}
h_t(x_1,\dots,x_{k+2}) := \one \Bigl\{ \, \beta_k\bigl( \check{C}(x_1,\dots,x_{k+2}; t) \bigr) = 1\, \Bigr\}, \ \ x_i \in \bbr^d.
\end{equation}
This indicator function can be expressed as the difference between two other indicators:
\begin{align}
h_t(x_1,\dots,x_{k+2}) &= \one \Bigl\{ \bigcap_{j=1, \, j \neq j_0}^{k+2} B(x_j; t) \neq \emptyset \ \text{for all } j_0 \in \{ 1,\dots,k+2 \} \Bigr\} \label{e:decomp.ind} \\
&\quad - \one \Bigl\{ \bigcap_{j=1}^{k+2} B(x_j; t) \neq \emptyset \Bigr\} \notag \\
&:= h_t^+(x_1,\dots,x_{k+2}) - h_t^-(x_1,\dots,x_{k+2}).  \notag
\end{align}
This decomposition comes from the fact that $h_t(x_1,\dots,x_{k+2}) = 1$ if and only if $\{x_1,\dots,x_{k+2}  \}$ forms an \textit{empty $(k+1)$-simplex} with respect to $t$, i.e., for each $j_0 \in \{ 1,\dots,k+2 \}$, the intersection $\bigcap_{j=1, \, j \neq j_0}^{k+2} B(x_j; t)$ is non-empty, while $\bigcap_{j=1}^{k+2} B(x_j; t)$ is empty. Note that $h_t^+$ and $h_t^-$ are non-decreasing functions in $t$:
\begin{equation}  \label{e:increase.ht}
h_s^{\pm}(x_1,\dots,x_{k+2}) \leq h_t^{\pm}(x_1,\dots,x_{k+2})
\end{equation}
for all $x_1,\dots,x_{k+2} \in \bbr^d$ and $0\leq s \leq t$.
Hereafter, we denote $h(x_1,\dots,x_{k+2}) := h_1(x_1,\dots,x_{k+2})$ and $h^{\pm}(x_1,\dots,x_{k+2}) := h_1^{\pm}(x_1,\dots,x_{k+2})$.

Next, we give a Poissonian structure to the limiting process. Let
\begin{equation}  \label{e:def.Ck}
C_k = \frac{s_{d-1}}{(k+2)! \bigl( \alpha (k+2) - d \bigr)}\,,
\end{equation}
where $s_{d-1}$ is a surface area of the $(d-1)$-dimensional unit sphere in $\bbr^d$. Writing $\lambda_k$ for the Lebesgue measure on $(\bbr^d)^{k+1}$, the \textit{Poisson random measure}  $M_k$ with intensity measure $C_k \lambda_k$ is defined by the finite-dimensional distributions
$$
\P \bigl\{ M_k(A)=m \bigr\} = e^{-C_k\lambda_k(A)} \bigl( C_k\lambda_k(A) \bigr)^m/m!, \ \ \ m=0,1,2,\dots
$$
for all measurable $A \subset (\bbr^d)^{k+1}$ with $\lambda_k(A)<\infty$. Furthermore, if $A_1,\dots,A_m$ are disjoint subsets in $(\bbr^d)^{k+1}$, then $M_k(A_1), \dots, M_k(A_m)$ are independent.

We now state the main result of this subsection, the proof of which is, however, deferred to the Appendix. In the following, $\Rightarrow$ denotes weak convergence. All weak convergences hereafter are basically either in the space $D[0,\infty)$ of right-continuous functions with left limits or in the space $C[0,\infty)$ of continuous functions.
\begin{theorem}  \label{t:sparse.poisson}
Suppose that $R_n = R_{k,n}$ satisfies \eqref{e:Rn.regime1}. Then,
\begin{equation}  \label{e:limit.betti.first}
\beta_{k,n}(t) \Rightarrow V_k(t):= \int_{(\bbr^d)^{k+1}} h_t(0,\by) M_k(d\by) \ \ \text{in } D[0,\infty).
\end{equation}
Furthermore,
\begin{equation}  \label{e:limit.barcode.first}
L_{k,n}(t) \Rightarrow \int_0^tV_k(s)ds \ \ \text{in } C[0,\infty).
\end{equation}
\end{theorem}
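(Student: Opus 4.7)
The plan is three-step: first reduce the Betti number to a combinatorial count of empty simplices, then establish Poisson convergence of that count in $D[0,\infty)$, and finally transfer to the lifetime sum via continuous mapping.

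To begin, I would approximate $\beta_{k,n}(t)$ by
$$
\widehat\beta_{k,n}(t) := \frac{1}{(k+2)!} \sum_{(Y_1,\dots,Y_{k+2}) \in (\Pn\cap B(0;R_n)^c)^{k+2}_{\neq}} h_t(Y_1,\dots,Y_{k+2}),
$$
the number of empty $(k+1)$-simplices. Since any $k$-dimensional cycle requires at least $k+2$ vertices, the discrepancy $\beta_{k,n}(t) - \widehat\beta_{k,n}(t)$ is controlled by the number of ``non-minimal'' clusters of at least $k+3$ mutually close points, whose expected count, computed via the multivariate Mecke formula, is of order $n^{k+3}R_n^d f(R_ne_1)^{k+3} = \bigl(n^{k+2}R_n^d f(R_ne_1)^{k+2}\bigr) \cdot n f(R_ne_1) \to 0$ under \eqref{e:Rn.regime1}. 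This yields $\sup_{0\leq t\leq T} |\beta_{k,n}(t) - \widehat\beta_{k,n}(t)| \to 0$ in probability for every $T>0$.

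Next I would compute the limiting intensity. The Slivnyak--Mecke formula gives
$$
\E\bigl[\widehat\beta_{k,n}(t)\bigr] = \frac{n^{k+2}}{(k+2)!} \int_{(B(0;R_n)^c)^{k+2}} h_t(x_1,\dots,x_{k+2}) \prod_{j=1}^{k+2} f(x_j)\, dx_j.
$$
Substituting $x_j = x_1 + y_{j-1}$ for $j\geq 2$, writing $x_1 = r\theta$ in polar coordinates, using \eqref{e:RV.tail} to replace each $f(x_1+y_{j-1})$ asymptotically by $f(r\theta)$, and applying Karamata's theorem to $\int_{R_n}^\infty r^{d-1} f(r\theta)^{k+2}\, dr$ reproduces the constant $C_k$ of \eqref{e:def.Ck} and gives $\E[\widehat\beta_{k,n}(t)] \to C_k \int h_t(0,\by)\, d\by = \E[V_k(t)]$. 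An analogous calculation for higher factorial moments, combined with a Poisson limit criterion for Poisson U-statistics (as in \cite{schulte:thale:2012}), yields finite-dimensional convergence of $\widehat\beta_{k,n}$ to $V_k$. To upgrade this to $D[0,\infty)$, I would decompose via \eqref{e:decomp.ind} as $\widehat\beta_{k,n} = \widehat\beta_{k,n}^+ - \widehat\beta_{k,n}^-$ and $V_k = V_k^+ - V_k^-$, using $h_t^\pm$ in place of $h_t$; by the monotonicity \eqref{e:increase.ht} each piece is non-decreasing in $t$, so finite-dimensional convergence forces Skorohod convergence of each piece, and since $V_k^+$ and $V_k^-$ almost surely have disjoint jump sets (the formation of an empty face and the closing up of a simplex occur at different parameter values), subtraction is continuous at the limit point, giving \eqref{e:limit.betti.first}.

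Finally, the relation \eqref{e:sum.barcode} identifies $L_{k,n}$ as the image of $\beta_{k,n}$ under the map $\Phi: D[0,\infty) \to C[0,\infty)$, $x\mapsto \bigl(\int_0^\cdot x(s)\,ds\bigr)$. Since $\Phi$ is continuous on paths that are locally bounded (which $V_k$ is, almost surely, because $h_t(0,\by)=0$ unless $\by\in B(0;2t)^{k+1}$), the continuous mapping theorem delivers \eqref{e:limit.barcode.first}. The principal obstacle is Step 1: Betti numbers are topological invariants that in principle depend on the full simplicial structure, so equating them asymptotically with a naive count of empty simplices requires the sparsity \eqref{e:Rn.regime1} to ensure that non-minimal cycles contribute nothing in the limit.
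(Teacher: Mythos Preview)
Your overall architecture---approximate $\beta_{k,n}$ by the empty-simplex count $\widehat\beta_{k,n}$, establish a Poisson limit for the latter in $D[0,\infty)$, then apply continuous mapping to get the lifetime sum---matches the paper's proof exactly, including the asymptotic negligibility estimate $n^{k+3}R_n^d f(R_ne_1)^{k+3}\to 0$ for non-minimal clusters and the use of the integration map $\Phi$ for \eqref{e:limit.barcode.first}. The substantive difference is how you obtain functional convergence in $D[0,\infty)$. The paper does \emph{not} use your monotonicity trick: it proves tightness of $\widehat\beta_{k,n}$ (denoted $G_{k,n}$ there) directly via the Billingsley criterion (Theorem 13.4 of \cite{billingsley:1999}), bounding
$\E\bigl[\,|G_{k,n}(t)-G_{k,n}(s)|\cdot|G_{k,n}(s)-G_{k,n}(r)|\,\bigr]$
by $C^*(t-r)^2$ through Palm calculus and an explicit estimate (its Lemma 3.2) on integrals of $h_{t,s}^\pm\, h_{s,r}^\pm$. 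This is more laborious but entirely self-contained.

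Your monotonicity route is slicker but has one genuine loose end. Marginal Skorohod convergence of $\widehat\beta_{k,n}^+$ and of $\widehat\beta_{k,n}^-$ (which indeed follows from monotonicity plus finite-dimensional convergence to a limit without fixed discontinuities) is not enough to subtract: you need \emph{joint} weak convergence of the pair $\bigl(\widehat\beta_{k,n}^+,\widehat\beta_{k,n}^-\bigr)$ in $D[0,\infty)\times D[0,\infty)$, and then continuity of $(x,y)\mapsto x-y$ at the limit point $(V_k^+,V_k^-)$. Joint tightness in the product topology follows from marginal tightness, but joint finite-dimensional convergence does not follow from marginal finite-dimensional convergence; you must run the Poisson U-statistic / point-process argument with the vector kernel $(h_t^+,h_t^-)$, or equivalently with arbitrary linear combinations via Cram\'er--Wold (this is essentially what the paper does through its point-process convergence \eqref{e:point.proc.conv}). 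Likewise, your claim that $V_k^+$ and $V_k^-$ almost surely have disjoint jump sets is correct---for each atom $\by$ of $M_k$ the birth time of $h_t^+(0,\by)$ is strictly smaller than that of $h_t^-(0,\by)$ for Lebesgue-a.e.\ $\by$, and distinct atoms produce distinct jump times a.s.---but it deserves a sentence of justification rather than an assertion. Once these two points are filled in, your argument goes through and is arguably cleaner than the paper's moment computation.
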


Recalling the definition of $h_t$, one may state that the $k$-dimensional holes contributing to the limit are always formed by connected components on $k+2$ vertices, while other components on more than $k+2$ vertices never appear in the limit. Since there need to be at least $k+2$ vertices to form a single $k$-dimensional hole, all the $k$-dimensional holes  remaining in the limit are necessarily formed by components of the smallest size.

Because of the decomposition \eqref{e:decomp.ind}, we can denote $\BV_k = \bigl( V_k(t), \, t\geq 0 \bigr)$ as
\begin{align*}
V_k(t) &= \int_{(\bbr^d)^{k+1}} h_t^+(0,\by) M_k(d\by) - \int_{(\bbr^d)^{k+1}} h_t^-(0,\by) M_k(d\by) \\
&:= V_k^+(t) - V_k^-(t).
\end{align*}
The following proposition shows that $\BV_k^+$ and $\BV_k^-$ can be represented as a time-changed Poisson process.
\begin{proposition}  \label{p:diff.poisson}
The process $\BV_k^{\pm}$ is represented in law as
$$
\bigl( V_k^{\pm}(t), \, t\geq 0 \bigr) \stackrel{d}{=} \Bigl( N_k^{\pm}\bigl( t^{d(k+1)} \bigr), \, t\geq 0 \Bigr),
$$
where $N_k^{\pm}$ is a Poisson process with intensity $C_k\int_{(\bbr^d)^{k+1}} h^{\pm}(0,\by) d\by$.
\end{proposition}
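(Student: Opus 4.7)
The plan is to exploit the scaling invariance of $h_t^{\pm}$ under simultaneous dilation of its arguments. Directly from \eqref{e:decomp.ind} one observes
\begin{equation*}
h_t^{\pm}(0, y_1, \dots, y_{k+1}) = h^{\pm}(0, y_1/t, \dots, y_{k+1}/t), \qquad t>0,
\end{equation*}
since $\bigcap_{j} B(y_j; t) \neq \emptyset$ if and only if $\bigcap_{j} B(y_j/t; 1) \neq \emptyset$ (any common point $z$ rescales to $z/t$, and analogously for the ``$+$'' version obtained by deleting one argument). After the change of variables $\by = t \bz$, the mean of $V_k^{\pm}(t)$ reads
\begin{equation*}
\E[V_k^{\pm}(t)] \,=\, C_k \int_{(\bbr^d)^{k+1}} h_t^{\pm}(0, \by)\, d\by \,=\, C_k\, t^{d(k+1)} \int_{(\bbr^d)^{k+1}} h^{\pm}(0, \bz)\, d\bz,
\end{equation*}
which already matches the mean of $N_k^{\pm}(t^{d(k+1)})$.

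To upgrade this from agreement of means to equality in law of the full processes, I would verify that $\BV_k^{\pm}$ is itself an inhomogeneous Poisson counting process in $t$. For each fixed $t$, $V_k^{\pm}(t) = M_k(A_t^{\pm})$ with $A_t^{\pm} := \{\by \in (\bbr^d)^{k+1} : h_t^{\pm}(0,\by) = 1\}$, and is therefore Poisson distributed with the mean computed above. By the monotonicity \eqref{e:increase.ht}, $A_s^{\pm} \subset A_t^{\pm}$ whenever $0 \leq s \leq t$, so the increment
\begin{equation*}
V_k^{\pm}(t) - V_k^{\pm}(s) \,=\, M_k\bigl( A_t^{\pm} \setminus A_s^{\pm} \bigr)
\end{equation*}
is Poisson with mean $C_k (t^{d(k+1)} - s^{d(k+1)}) \int h^{\pm}(0,\bz) d\bz$. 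Increments over disjoint time intervals correspond to $M_k$ evaluated over disjoint subsets of $(\bbr^d)^{k+1}$ and are therefore independent. This characterizes $\BV_k^{\pm}$ as a Poisson process with mean function $t \mapsto C_k t^{d(k+1)} \int h^{\pm}(0,\bz) d\bz$, which is precisely the law of $\bigl( N_k^{\pm}(t^{d(k+1)}) \bigr)_{t \geq 0}$.

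I do not anticipate any substantive obstacle: the argument rests solely on the scaling property of the \v{C}ech construction and the definition of the Poisson random measure $M_k$. The only minor technical point is to confirm right-continuity of $t \mapsto h_t^{\pm}$ (so that both processes live in $D[0,\infty)$), which follows from the observation that if $\bigcap_j B(y_j; t_n) \neq \emptyset$ for a sequence $t_n \downarrow t$, then the decreasing intersection of these nonempty compact sets is itself nonempty, hence $\bigcap_j B(y_j; t) \neq \emptyset$.
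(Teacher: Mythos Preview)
Your proof is correct and follows essentially the same route as the paper: both arguments show that $\BV_k^{\pm}$ has independent increments, with each increment $V_k^{\pm}(t)-V_k^{\pm}(s)$ Poisson of mean $C_k\int h^{\pm}(0,\bz)\,d\bz\,(t^{d(k+1)}-s^{d(k+1)})$, which characterizes the time-changed Poisson process. The only cosmetic difference is that the paper packages this via the joint moment generating function \eqref{e:mgf.Vk}, whereas you read off the same conclusions directly from the representation $V_k^{\pm}(t)=M_k(A_t^{\pm})$ and the defining properties of the Poisson random measure on the nested sets $A_s^{\pm}\subset A_t^{\pm}$.
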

\begin{proof}
It is straightforward to calculate the moment generating function of $(V_k^{\pm}(t_1), \dots, V_k^{\pm}(t_m))$ for $0 \leq t_1 < \dots < t_m <\infty$. For $\lambda_1, \dots, \lambda_m \geq 0$, we have
\begin{equation}  \label{e:mgf.Vk}
\E \Bigl\{ \exp \bigl\{ -\sum_{j=1}^m \lambda_j V_k^{\pm}(t_j) \bigr\} \Bigr\} = \exp \Bigl\{ -C_k \int_{(\bbr^d)^{k+1}} \bigl( 1-e^{-\sum_{j=1}^m \lambda_j h_{t_j}^{\pm}(0,\by)} \bigr)d\by \Bigr\}.
\end{equation}
Exploiting this result, one can easily see that $\BV_k^{\pm}$ has independent increments, while for $0\leq s \leq t$, $V_k^{\pm}(t) - V_k^{\pm}(s)$ has a Poisson law with mean $C_k\int_{(\bbr^d)^{k+1}} h^{\pm}(0,\by) d\by\, (t^{d(k+1)}-s^{d(k+1)})$. Now, the claim follows.
\end{proof}
\begin{remark}
By the moment generating function \eqref{e:mgf.Vk}, it is easy to see that for each $t\geq0$, $V_k(t)$ has a Poisson distribution with mean $C_k \int_{(\bbr^d)^{k+1}} h(0,\by)\, d\by\, t^{d(k+1)}$. Nevertheless, the process $\BV_k$ cannot be represented as a (time-changed) Poisson process, since the sample paths of $\BV_k$ allow for both upward and downward jumps.
\end{remark}

\subsection{Functional Central Limit Theorem in the Second Regime}
 \label{s:second.regime}

In this subsection, we turn to the second regime, which is characterized by
\begin{equation} \label{e:second.Rn}
n^{k+2} R_n^d f(R_ne_1)^{k+2} \to \infty, \ \  nf(R_ne_1) \to 0, \ \ \ n \to \infty,
\end{equation}
for which $(R_n)$ exhibits a slower divergence rate than that in the previous regime. Thus, we expect that, in an asymptotic sense, there  appear infinitely many $k$-dimensional holes outside $B(0;R_n)$, and accordingly, instead of a Poissonian limit theorem, some sort of functional central limit theorem (FCLT) governs the behavior of $L_{k,n}(t)$.

To formulate the limiting process for $L_{k,n}(t)$, we need some preliminary work. As before, let $\lambda_k$ denote the Lebesgue measure on $(\bbr^d)^{k+1}$ and $C_k$ a positive constant given in \eqref{e:def.Ck}. Denote by $G_k$ a \textit{Gaussian $C_k\lambda_k$-noise}, such that
$$
G_k(A) \sim \mathcal N \bigl( 0, C_k\lambda_k(A) \bigr)
$$
for measurable sets $A \subset (\bbr^d)^{k+1}$ with $\lambda_k(A) < \infty$, and if $A \cap B = \emptyset$, then $G_k(A)$ and $G_k(B)$ are independent.

We define a Gaussian process $\BY_k = \bigl( Y_k(t), \, t\geq0 \bigr)$ by
$$
Y_k(t) = \int_{(\bbr^d)^{k+1}} h_t(0,\by) G_k(d\by), \ \ t\geq 0,
$$
where $h_t$ is given in \eqref{e:def.ht}.
This process involves the same indicator function as  $\BV_k$, which implies that, similarly to the last regime, the $k$-dimensional holes affecting $\BY_k$ must be always formed by connected components on $k+2$ vertices (i.e., components of the smallest size).

We now state the main limit theorem for $L_{k,n}(t)$. The proof is presented in the Appendix.
\begin{theorem}  \label{t:sparse.fclt}
Suppose that $(R_n)$ satisfies \eqref{e:second.Rn}. Then,
$$
\bigl( n^{k+2}R_n^d f(R_ne_1)^{k+2} \bigr)^{-1/2} \Bigl( L_{k,n}(t) - \E \bigl\{ L_{k,n}(t) \bigr\}\Bigr) \Rightarrow \int_0^t Y_k(s)ds \ \ \text{in } C[0,\infty).
$$
\end{theorem}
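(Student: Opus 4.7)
Since $L_{k,n}(t)=\int_0^t\beta_{k,n}(s)\,ds$ by \eqref{e:sum.barcode} and the integration map $\phi\mapsto\int_0^\cdot\phi(s)\,ds$ is continuous from $D[0,\infty)$ into $C[0,\infty)$, the continuous mapping theorem reduces the claim to the corresponding functional central limit theorem for the Betti numbers:
$$
a_n^{-1/2}\bigl(\beta_{k,n}(\cdot)-\E\beta_{k,n}(\cdot)\bigr)\Rightarrow Y_k(\cdot)\quad\text{in }D[0,\infty),
$$
where $a_n:=n^{k+2}R_n^df(R_ne_1)^{k+2}\to\infty$. Mirroring the proof of Theorem \ref{t:sparse.poisson}, the first step is to replace $\beta_{k,n}(t)$ by its empty--$(k+1)$--simplex proxy
$$
\widehat\beta_{k,n}^{\pm}(t):=\sum_{\mathcal Y\subset\Pn\cap B(0;R_n)^c,\ |\mathcal Y|=k+2}h_t^{\pm}(\mathcal Y),\qquad\widehat\beta_{k,n}(t):=\widehat\beta_{k,n}^{+}(t)-\widehat\beta_{k,n}^{-}(t).
$$
Any configuration on at least $k+3$ points that could create or destroy a $k$-dimensional hole beyond what $\widehat\beta_{k,n}(t)$ already detects has expected count of order $n^{k+3}R_n^df(R_ne_1)^{k+3}=a_n\cdot nf(R_ne_1)=o(a_n)$, and a comparable variance bound yields $a_n^{-1/2}|\beta_{k,n}(t)-\widehat\beta_{k,n}(t)|\to 0$ in $L^2$ uniformly in $t$ on compact intervals, since $nf(R_ne_1)\to 0$ by \eqref{e:second.Rn}.

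The processes $\widehat\beta_{k,n}^{\pm}$ are Poisson U-statistics of order $k+2$. Iterating the multivariate Mecke formula decomposes their covariance into a sum indexed by the number $j\in\{1,\dots,k+2\}$ of Poisson points shared between two copies of the kernel. A change of variables centred at a shared point, combined with Karamata's theorem applied to $\int_{R_n}^\infty r^{d-1}f(r)\,dr\sim R_n^df(R_ne_1)/(\alpha-d)$, shows that the $j$-th piece is of order $a_n\bigl(nf(R_ne_1)\bigr)^{k+2-j}$; only the top-order term $j=k+2$ survives normalization, yielding
$$
a_n^{-1}\mathrm{Cov}\bigl(\widehat\beta_{k,n}^{\pm}(s),\widehat\beta_{k,n}^{\pm}(t)\bigr)\longrightarrow C_k\int_{(\bbr^d)^{k+1}}h_s^{\pm}(0,\by)h_t^{\pm}(0,\by)\,d\by.
$$
This matches the covariance of the Gaussian process $t\mapsto\int h_t^{\pm}(0,\by)G_k(d\by)$, and joint limits (across $\pm$ signs and across multiple times) are handled identically by a completely analogous computation. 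Gaussianity of the finite-dimensional distributions then follows either from the fourth-moment / contraction criterion of Peccati--Sol\'e--Taqqu--Utzet applied to the dominant top-order Wiener--It\^o chaos, or by a direct cluster-decomposition argument that exploits the compact spatial support of $h^{\pm}$ together with the asymptotic independence of distant local clusters guaranteed by $nf(R_ne_1)\to 0$.

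It remains to prove tightness. Since $h_t^{\pm}(\cdot)$ is non-decreasing in $t$ by \eqref{e:increase.ht}, each process $\widehat\beta_{k,n}^{\pm}(\cdot)$ is monotone, and the same Mecke computation used for the covariance yields a uniform modulus-of-continuity estimate on increments, of order $(t^{d(k+1)}-s^{d(k+1)})a_n$; together with the finite-dimensional convergence this gives tightness in $D[0,\infty)$ via a Bickel--Wichura type criterion. Transferring the FCLT from $\widehat\beta_{k,n}$ back to $\beta_{k,n}$ via the approximation step, and then to $L_{k,n}$ via the continuous mapping theorem, completes the argument. The main obstacle, I anticipate, is the Gaussian central limit theorem for the top-order Poisson chaos: the variance calculation itself is routine, but verifying that the limit is Gaussian rather than a genuine multiple stochastic integral requires careful exploitation of the local sparsity $nf(R_ne_1)\to 0$, which is precisely what distinguishes this regime from the Poisson limit of Theorem \ref{t:sparse.poisson}.
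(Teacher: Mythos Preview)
Your overall strategy---reduce to an FCLT for $\beta_{k,n}$ in $D[0,\infty)$ and then apply the continuous mapping $\phi\mapsto\int_0^\cdot\phi$---differs from the paper's, and the reduction step contains a genuine gap that the paper explicitly flags as open. In Remark~\ref{r:betti.conv.second} the author states that finite-dimensional convergence of $a_n^{-1/2}(\beta_{k,n}(\cdot)-\E\beta_{k,n}(\cdot))$ to $Y_k(\cdot)$ is provable, but that he is ``unable to prove the required tightness'' in $D[0,\infty)$. Your sketch does not overcome this. The increment bound you quote, $\mathrm{Var}\bigl(\widehat\beta_{k,n}^{\pm}(t)-\widehat\beta_{k,n}^{\pm}(s)\bigr)\sim C\,a_n\,(t^{d(k+1)}-s^{d(k+1)})$, gives after normalization only $\E|X_n(t)-X_n(s)|^2\le C|t-s|$, which is \emph{not} enough for a Bickel--Wichura/Billingsley criterion: one needs an exponent strictly greater than~$1$ on $|t-r|$, and Cauchy--Schwarz from a second-moment bound produces exactly~$1$. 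The monotonicity of $\widehat\beta_{k,n}^{\pm}$ does not help either, because the object whose tightness you need is the \emph{centered} and rescaled process $a_n^{-1/2}(\widehat\beta_{k,n}^{\pm}-\E\widehat\beta_{k,n}^{\pm})$, which is no longer monotone; and the uncentered, unscaled process diverges, so the ``monotone $+$ continuous limit $\Rightarrow$ uniform convergence'' trick used in the Poisson regime is unavailable here.

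The paper avoids this obstacle by \emph{never} proving an FCLT for $\beta_{k,n}$. Instead it works directly with $L_{k,n}(t)=\int_0^t\beta_{k,n}(s)\,ds$: finite-dimensional convergence is obtained for the integrated process via Stein's method with a dependency-graph construction (paralleling Lemma~\ref{l:giant.truncated.clt}), and tightness in $C[0,\infty)$ follows from the second-moment bound
\[
\E\Bigl[\bigl(X_n(T)-X_n(S)\bigr)^2\Bigr]
=\int_S^T\!\!\int_S^T a_n^{-1}\,\mathrm{Cov}\bigl(\beta_{k,n}(t),\beta_{k,n}(s)\bigr)\,ds\,dt
\le C\,(T-S)^2,
\]
where the crucial extra factor of $(T-S)$ comes from the double integral, not from any extra regularity of $\beta_{k,n}$ itself. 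Your covariance computation and your handling of the higher-order configurations (both of which are correct in spirit, and use different but legitimate tools---Mecke/Karamata and fourth-moment or cluster arguments in place of the paper's Palm theory/Potter bounds and Stein's method) can be recycled for this purpose, but the architecture of the proof has to change: prove fidi convergence and tightness for $L_{k,n}$ directly in $C[0,\infty)$, and abandon the attempt to get a functional limit for $\beta_{k,n}$ in $D[0,\infty)$.
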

\begin{remark}  \label{r:betti.conv.second}
This theorem does not mention anything about a direct result on the FCLT for $\beta_{k,n}(t)$. As can be seen in the proof of the theorem, however, a slight modification of the argument proves the CLT for $\beta_{k,n}(t)$ in a finite-dimensional sense. Namely, under the assumptions of Theorem \ref{t:sparse.fclt},
$$
\beta_{k,n}(t) \stackrel{fidi}{\Rightarrow}  Y_k(t),
$$
where $\stackrel{fidi}{\Rightarrow}$ denotes a finite-dimensional weak convergence. We believe that this holds even in the space $D[0,\infty)$ of right-continuous functions with left limits, but we are unable to prove the required tightness.
\end{remark}

In order to further clarify the structure of $\BY_k$, we express the process as
\begin{align*}
Y_k(t) &= \int_{(\bbr^d)^{k+1}} h_t^+(0,\by) G_k(d\by) - \int_{(\bbr^d)^{k+1}} h_t^-(0,\by) G_k(d\by) \\
&:= Y_k^+(t) - Y_k^-(t).
\end{align*}
We claim that $\BY_k^+$ and $\BY_k^-$ are represented as a time-changed Brownian motion. Note, however, that, although $\BY_k$ is a Gaussian process, it cannot be denoted as a (time-changed) Brownian motion.
\begin{proposition}
The process $\BY_k^{\pm}$ can be represented in law as
$$
\bigl( Y_k^{\pm}(t), \, t\geq0 \bigr)\stackrel{d}{=} \Bigl( B^{\pm} \bigl( D_k^{\pm}\, t^{d(k+1)} \bigr), \, t\geq0 \Bigr),
$$
where $B^{\pm}$ denotes the standard Brownian motion, and $D_k^{\pm} := C_k\int_{(\bbr^d)^{k+1}}h^{\pm}(0,\by)\, d\by$.
\begin{proof}
It suffices to prove that the covariance functions on both sides coincide. It follows from \eqref{e:increase.ht} that for $0\leq s\leq t$,
\begin{align*}
\E \bigl\{ Y_k^{\pm}(t)Y_k^{\pm}(s) \bigr\} &= C_k\int_{(\bbr^d)^{k+1}} \hspace{-10pt} h_t^{\pm}(0,\by)\, h_s^{\pm}(0,\by)\, d\by \\
&=s^{d(k+1)} D_k^{\pm} \\
&= \E \Bigl\{ B^{\pm}\bigl(D_k^{\pm}\, t^{d(k+1)}\bigr)\, B^{\pm}\bigl(D_k^{\pm}\, s^{d(k+1)}\bigr) \Bigr\}.
\end{align*}
\end{proof}
\end{proposition}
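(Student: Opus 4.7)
The plan is to exploit the fact that both $\BY_k^{\pm}$ and the proposed time-changed Brownian motion are centered Gaussian processes, so to establish equality in law it will suffice to verify that their covariance functions agree. Since $\BY_k^{\pm}$ is defined as a stochastic integral against the Gaussian $C_k\lambda_k$-noise $G_k$, I would first record the basic Itô-type isometry
\[
\E\bigl\{ Y_k^{\pm}(t) Y_k^{\pm}(s) \bigr\} = C_k \int_{(\bbr^d)^{k+1}} h_t^{\pm}(0,\by)\, h_s^{\pm}(0,\by)\, d\by,
\]
which is really just the defining property that $G_k(A),G_k(B)$ are independent on disjoint sets and each has variance $C_k\lambda_k(\cdot)$.

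Next I would simplify the integrand. The key observation is the monotonicity \eqref{e:increase.ht}: for $0\le s \le t$ one has $h_s^{\pm}\le h_t^{\pm}$ pointwise, and since both are $\{0,1\}$-valued, their product collapses to the smaller one, i.e.\ $h_t^{\pm}\cdot h_s^{\pm} = h_s^{\pm}$. This reduces the covariance to $C_k \int h_s^{\pm}(0,\by)\, d\by$ and eliminates the coupling of the two time parameters.

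The step I expect to require the most care is the scaling relation that converts this into $s^{d(k+1)} D_k^{\pm}$. The indicators $h^{\pm}$ are defined geometrically through intersections of balls whose radii scale linearly in the parameter, so $h_s^{\pm}(0,\by) = h^{\pm}(0, \by/s)$ for $\by \in (\bbr^d)^{k+1}$. Performing the change of variables $\by \mapsto s\by$ (with Jacobian $s^{d(k+1)}$ since we integrate over $k+1$ copies of $\bbr^d$) then yields
\[
C_k \int_{(\bbr^d)^{k+1}} h_s^{\pm}(0,\by)\, d\by = s^{d(k+1)} C_k \int_{(\bbr^d)^{k+1}} h^{\pm}(0,\by)\, d\by = s^{d(k+1)} D_k^{\pm}.
\]

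Finally, I would compare with the right-hand side: a standard Brownian motion $B^{\pm}$ evaluated at the deterministic time change $u\mapsto D_k^{\pm} u^{d(k+1)}$ is a centered Gaussian process whose covariance at $(s,t)$ with $s\le t$ is simply $D_k^{\pm} s^{d(k+1)}$, matching the expression obtained above. Since both processes are Gaussian, centered, and have identical covariance functions, they are equal in law as processes indexed by $t\ge 0$, which completes the argument. The only nontrivial ingredient is genuinely the scaling identity for $h_s^{\pm}$; everything else is a formal Gaussian computation.
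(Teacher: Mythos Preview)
Your proposal is correct and follows essentially the same route as the paper's proof: reduce to matching covariance functions of centered Gaussian processes, use the isometry for the Gaussian noise integral, invoke the monotonicity \eqref{e:increase.ht} to collapse $h_t^{\pm}h_s^{\pm}$ to $h_s^{\pm}$, and identify the result with the Brownian-motion covariance. You are simply more explicit than the paper about the scaling step $h_s^{\pm}(0,\by)=h^{\pm}(0,\by/s)$ and the Jacobian $s^{d(k+1)}$, which the paper compresses into a single line.
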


\subsection{Functional Central Limit Theorem in the Third Regime}
\label{s:third.regime}

Finally, we turn to the third regime in which $(R_n)$ is determined by
\begin{equation}  \label{e:Rn.regime3.no}
nf(R_ne_1) \to \lambda \ \ \text{as } n\to\infty
\end{equation}
for some $\lambda>0$.
In this case, the formation of $k$-dimensional holes drastically varies as compared to the previous regimes.
If $(R_n)$ satisfies \eqref{e:Rn.regime3.no}, then, by definition, $B(0; R_n)$ coincides with the weak core (up to multiplicative factors). Therefore, many random points become highly connected to one another in the area sufficiently close to the weak core. As a result, connected components on $i$ vertices for $i=k+2,k+3,\dots$ can all contribute to the limit in the FCLT. This phenomenon was never observed in the previous regimes.

In order to make the notations for defining the limiting process significantly lighter, we introduce several shorthand notations.
First, for $x_i \in \bbr^d$, $i=1,\dots,m$, and $r>0$,
$$
\B (x_1,\dots,x_m; r) := \bigcup_{i=1}^m B(x_i; r).
$$
For $i \geq k+2$, $j\geq1$, and $t\geq0$, we define an indicator $h_t^{(i,j)}: (\bbr^d)^i \to \{ 0,1 \}$ by
\begin{equation}  \label{e:def.hijt}
\hij_t(\Y) := \one \Bigl\{\, \beta_k \bigl( \check{C}(\Y; t) \bigr) = j, \ \check{C}(\Y; t) \text{ is connected} \Bigr\}\,, \ \ \ \Y = (y_1,\dots,y_i) \in (\bbr^d)^i.
\end{equation}
Clearly, $h_t^{(k+2,1)}$ coincides with the $h_t$ defined in \eqref{e:def.ht}. In particular, we write $\hij(\Y) := \hij_1(\Y)$. \\
Furthermore, for $i, \ip \geq k+2$, $j, \jp \geq 1$, and $t,s \geq0$, define an indicator $\hijipjp_{t,s}: (\bbr^d)^{i + \ip} \to \{ 0,1 \}$ by
$$
\hijipjp_{t,s} (\Y, \Yp) = \hij_t(\Y)\, \hipjp_s(\Yp), \ \ \ \Y \in (\bbr^d)^i, \ \Yp \in (\bbr^d)^{\ip},
$$
and, we set, for $i, \ip \geq k+2$, $t,s\geq0$,
$$
\Diip (t,s) := \bigl\{ (x_1,\dots,x_{i+\ip}) \in (\bbr^d)^{i+\ip}: \B(x_1,\dots,x_i; t) \cap \B (x_{i+1},\dots, x_{i+\ip}; s) \neq \emptyset \bigr\}.
$$
In the special case $t=s$, we denote $\Diip (t) := \Diip (t,t)$.

Now, we define stochastic processes $\BZ_k^{(i,j)} = \bigl(\Zij_k(t), \, t\geq 0  \bigr)$ for $i \geq k+2$ and $j\geq 1$, which function as the building blocks for the limiting process in the FCLT. First, define, for $i, \ip \geq k+2$, $j, \jp \geq 1$, $t,s \geq 0$, and $\lambda > 0$,
\begin{align}
\muijjp_k (t,s,\lambda) &:= s_{d-1} \int_1^\infty \rho^{d-1-\alpha i} \int_{(\bbr^d)^{i-1}} \hspace{-5pt} \hij_t(0,\by)\, \hijp_s(0,\by)  \label{e:def.muijjp}  \\
& \qquad \quad \times e^{-\lambda \rho^{-\alpha} (s \vee t)^d \text{vol} \bigl( \B (0,\by; 1) \bigr)} d\by d\rho, \notag
\end{align}
and
\begin{align}
\xiijipjp_k(t,s,\lambda) &:= s_{d-1} \int_1^\infty \rho^{d-1-\alpha (i+\ip)} \int_{(\bbr^d)^{i+\ip-1}} \hspace{-5pt} \hijipjp_{t,s}(0,\by) \label{e:def.xiijipjp} \\
&  \qquad \times \biggl[ \Bigl( \one_{\Diip (t,s)}(0,\by) - \one_{\Diip( (t\vee s) /2)} (0,\by) \Bigr)\, \notag \\
&\qquad \times e^{-\lambda \rho^{-\alpha} \text{vol} \bigl( \B(0,y_1,\dots y_{i-1}; t) \cup \B (y_i, \dots, y_{i+\ip-1}; s) \bigr)}- \one_{\Diip (t,s)}(0,\by)   \notag \\
& \qquad  \times e^{-\lambda \rho^{-\alpha} \bigl[ \text{vol}\bigl(\B(0,y_1,\dots y_{i-1}; t)\bigr) +  \text{vol}\bigl(\B(y_i,\dots y_{i+\ip-1}; s)\bigr) \bigr]}   \biggr] d\by d\rho, \notag
\end{align}
where $a \vee b = \max \{ a, b \}$ for $a, b \in \bbr$, and $h_t^{(i,j)}(0,\by) = h_t^{(i,j)}(0,y_1,\dots,y_{i-1})$ with $0, y_1,\dots,y_{i-1}\in \bbr^d$ etc.
These functions are used to formulate the covariance functions of $\BZ^{(i,j)}_k$'s.
More specifically, for $i \geq k+2$ and $j \geq 1$, we define $\BZ_k^{(i,j)}$ as a zero-mean Gaussian process with the covariance function given by
\begin{equation}  \label{e:cov.Zijk}
\text{Cov} \bigl( \Zij_k(t), \Zij_k(s) \bigr)
= \frac{\lambda^i}{i!}\, \muijj_k(t,s,\lambda) + \frac{\lambda^{2i}}{(i!)^2}\, \xi^{(i,j,i,j)}_k(t,s,\lambda), \ \ t,s \geq 0.
\end{equation}
For every $i \geq k+2$, there exists $j_0  \geq 1$, which depends on $i$, such that for all $j \geq j_0$ and $t\geq0$, $\hij_t$ is identically zero, in which case, \eqref{e:cov.Zijk} allows us to take $\BZ_k^{(i,j)}$ as a zero process, i.e., $\Zij_k(t) \equiv 0$ for all $t\geq0$. For example, $\BZ_k^{(k+2,j)}$ is a zero process for all $j \geq 2$.

In addition, we assume that the processes $\bigl( \BZ_k^{(i,j)}, \, i \geq k+2, \, j\geq1 \bigr)$ are dependent on each other in such a way that for $i, \ip \geq k+2$, $j, \jp \geq 1$,
\begin{align*}
&\text{Cov} \bigl( \Zij_k(t), \Zipjp_k(s) \bigr)
= \frac{\lambda^i}{i!}\, \muijjp_k(t,s,\lambda)\, \delta_{i,\ip} + \frac{\lambda^{i+\ip}}{i!\, \ip !}\, \xiijipjp_k(t,s,\lambda), \ \ \ t,s \geq 0,
\end{align*}
where $\delta_{i,\ip}$ is the Kronecker delta.

We now  define a zero-mean Gaussian process by
\begin{equation}  \label{e:def.Zkt}
Z_k(t) := \sum_{i=k+2}^\infty \sum_{j\geq1} j \Zij_k(t), \ \ \ t\geq0,
\end{equation}
which appears in the limiting process in the FCLT.
It is shown in the proof of Theorem \ref{t:giant.fclt} below that the right hand side of \eqref{e:def.Zkt}  almost surely converges for each $t\geq0$.

We can rewrite $Z_k(t)$ as
$$  
Z_k(t) = Z_k^{(k+2,1)}(t) + \sum_{i=k+3}^\infty \sum_{j\geq1} j \Zij_k(t).
$$
Since the covariance function of $\BZ_k^{(i,j)}$ involves the indicator function $\hij_\cdot$, we can consider the process $\BZ_k^{(i,j)}$ as representing the connected components that are on $i$ vertices and possess $j$ holes. In particular, the process $\BZ_k^{(k+2,1)}$ represents the connected components on $k+2$ vertices with a single $k$-dimensional hole. This implies that $\BZ_k^{(k+2,1)}$ may share the same property as $\BY_k$ in the last regime in the sense that both processes represent connected components only of the smallest size.
In the present regime, however, we cannot ignore the effect of larger components emerging near the weak core, and therefore, many other Gaussian processes, except for $\BZ_k^{(k+2,1)}$, will contribute to the limit in the FCLT.

Before presenting the main limit theorem, we add a technical assumption that a constant $\lambda$ in \eqref{e:Rn.regime3.no} is less than $(e\omega_d)^{-1}$, where $\omega_d$ is the volume of a unit ball in $\bbr^d$. It seems that the FCLT below still holds without any upper bound condition for $\lambda$, but this is needed for technical reasons during the proof. Similarly, the domain of functions in the space $C$ must be restricted to the unit interval $[0,1]$.
The proof of the theorem is deferred to the Appendix.
\begin{theorem}  \label{t:giant.fclt}
Suppose that $(R_n)$ satisfies
\begin{equation}  \label{e:Rn.regime3}
nf(R_ne_1) \to \lambda \in \bigl(0, (e\omega_d)^{-1} \bigr) \ \ \text{as } n\to\infty.
\end{equation}
Then,
$$
R_n^{-d/2} \Bigl( L_{k,n}(t) - \E \bigl\{ L_{k,n}(t) \bigr\}\Bigr) \Rightarrow \int_0^t Z_k(s)ds \ \ \text{in } C[0,1].
$$
\end{theorem}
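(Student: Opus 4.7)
Since \eqref{e:sum.barcode} yields $L_{k,n}(t) - \E\{L_{k,n}(t)\} = \int_0^t\bigl(\beta_{k,n}(s) - \E\{\beta_{k,n}(s)\}\bigr)ds$, the plan is first to establish a finite-dimensional CLT for the normalized, centered Betti process $\tilde\beta_{k,n}(t) := R_n^{-d/2}\bigl(\beta_{k,n}(t) - \E\{\beta_{k,n}(t)\}\bigr)$ with limit $\BZ_k$, and then to upgrade this to functional convergence of the integrated process in $C[0,1]$ by tightness. Integration is what allows a $C$-valued (rather than only $D$-valued) limit and in fact provides tightness essentially for free out of a variance bound; this also explains why the argument succeeds here where the FCLT for $\tilde\beta_{k,n}$ itself is only available in the f.d.d.\ sense (cf.\ Remark \ref{r:betti.conv.second}).

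For the f.d.d.\ convergence, I would exploit the additivity of $k$-th homology ($k\geq 1$) across connected components to write
\begin{equation*}
\beta_{k,n}(t) = \sum_{i=k+2}^{\infty} \sum_{j\geq 1} j\, J^{(i,j)}_n(t),
\end{equation*}
where $J^{(i,j)}_n(t)$ counts the connected components of $\check{C}(\Pn\cap B(0;R_n)^c;t)$ with exactly $i$ vertices and $\beta_k=j$. Concretely, $J^{(i,j)}_n(t)$ is the Poisson $U$-statistic obtained by summing $\hij_t$ over unordered $i$-tuples in $\Pn\cap B(0;R_n)^c$ and multiplying by the indicator that those $i$ points are at distance $>t$ from the remaining points of $\Pn$. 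Applying the Slivnyak--Mecke formula to the first two factorial moments, then using the radial substitution $x=R_n\rho\theta$ together with the regular variation \eqref{e:RV.tail} and $nf(R_n\rho\theta)\to\lambda\rho^{-\alpha}$ from \eqref{e:Rn.regime3}, the void probabilities produce exactly the exponentials appearing in \eqref{e:def.muijjp}--\eqref{e:def.xiijipjp}, and a direct computation gives
\begin{equation*}
R_n^{-d}\,\text{Cov}\bigl(\beta_{k,n}(t),\beta_{k,n}(s)\bigr) \longrightarrow \text{Cov}\bigl(Z_k(t),Z_k(s)\bigr).
\end{equation*}
Joint asymptotic normality of the truncated sum $\sum_{i=k+2}^{I}\sum_{j\geq 1} jR_n^{-d/2}(J^{(i,j)}_n-\E J^{(i,j)}_n)$ at finitely many $t$-values then follows from a standard CLT for Poisson $U$-statistics (via Malliavin--Stein, or the method of moments applied to the cumulant computation).

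The principal obstacle is controlling the infinite sum over $i$, and this is where the hypothesis $\lambda<(e\omega_d)^{-1}$ is crucial. A Poisson branching/coloring estimate shows that the expected contribution of components on $i$ vertices decays roughly like $R_n^d(\lambda e\omega_d)^i$ up to polynomial factors, so the sub-criticality assumption guarantees
\begin{equation*}
\sup_{n,\,s\in[0,1]} R_n^{-d}\,\text{Var}\Bigl(\sum_{i>I}\sum_{j\geq 1} j\, J^{(i,j)}_n(s)\Bigr) \longrightarrow 0 \ \ \text{as } I\to\infty,
\end{equation*}
and the analogous bound for the limit ensures \eqref{e:def.Zkt} converges in $L^2$. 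Combining the truncated CLT with this uniform $L^2$ smallness gives the f.d.d.\ convergence $\tilde\beta_{k,n}\stackrel{fidi}{\Rightarrow}\BZ_k$.

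Finally, to promote this to functional convergence of $M_n(t):=\int_0^t\tilde\beta_{k,n}(s)ds$ in $C[0,1]$, I would verify tightness via the uniform variance bound $\sup_n\sup_{s\in[0,1]}\E\tilde\beta_{k,n}(s)^2\leq C$ obtained above. Cauchy--Schwarz gives
\begin{equation*}
\E\bigl(M_n(t)-M_n(s)\bigr)^2 \leq (t-s)\int_s^t \E\tilde\beta_{k,n}(u)^2\, du \leq C(t-s)^2,
\end{equation*}
which yields tightness of $(M_n)$ in $C[0,1]$. The finite-dimensional distributions of $M_n$ are identified with those of $\int_0^\cdot Z_k$ by approximating each integral by a Riemann sum over a mesh $\delta\downarrow 0$, using the f.d.d.\ convergence of $\tilde\beta_{k,n}$ at the sample points and the uniform $L^2$ bound to control the approximation error. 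Together, these complete the proof.
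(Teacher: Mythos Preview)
Your proposal is correct and follows the same overall architecture as the paper: decompose $\beta_{k,n}(t)$ according to the size $i$ and Betti number $j$ of connected components, truncate to $i\leq M$, prove a CLT for the truncated version, control the tail in $L^2$ via the subcriticality $\lambda e\omega_d<1$ (the paper gets the same geometric decay $\sum_i i^{k+1}(\lambda(1+\delta)e\omega_d)^i$ from spanning-tree bounds and Stirling), and obtain tightness of the integrated process from the Cauchy--Schwarz estimate $\E(M_n(t)-M_n(s))^2\leq C(t-s)^2$.

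The one substantive difference is in the CLT step. The paper does not invoke Malliavin--Stein or cumulants; instead it introduces an additional \emph{spatial} truncation to an annulus $\text{Ann}(R_n,KR_n)$, tiles this bounded region by unit cubes, and applies Stein's method for normal approximation via a dependency graph (Theorem~2.4 in Penrose), with the annulus cutoff removed afterward by a second approximation argument. This is more elementary and entirely self-contained. Your route via Malliavin--Stein is legitimate too, but be careful with the phrasing ``standard CLT for Poisson $U$-statistics'': because of the isolation indicator, $J^{(i,j)}_n$ is not a $U$-statistic of fixed order but a local (stabilizing) Poisson functional, so one needs the second-order Poincar\'e inequality or a stabilization-based CLT rather than the $U$-statistic version. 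A second minor difference is the order of operations: the paper proves the f.d.d.\ CLT \emph{directly} for $\int_0^{t_p}\beta_{k,n}^{(M)}(s)\,ds$ (applying Cram\'er--Wold to the integrals), whereas you first prove f.d.d.\ for $\tilde\beta_{k,n}$ and then pass to the integral via Riemann sums; both are fine, and your uniform $L^2$ bound is exactly what makes the Riemann-sum passage rigorous.
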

\begin{remark}
As in Remark \ref{r:betti.conv.second}, we can also obtain finite-dimensional convergence of $\beta_{k,n}(t)$. That is, under the conditions of Theorem \ref{t:giant.fclt},
$$
\beta_{k,n}(t) \stackrel{fidi}{\Rightarrow} Z_k(t).
$$
\end{remark}

\section{Appendix}  \label{s:appendix}

In this Appendix, we provide the proofs of Theorems \ref{t:sparse.poisson}, \ref{t:sparse.fclt}, and \ref{t:giant.fclt}. We first introduce the results known as the ``Palm theory" in order to compute the expectations related to Poisson point processes. Indeed, the Palm theory applies many times hereafter in the Appendix. In Section \ref{s:proof.first.regime}, we prove Theorem \ref{t:sparse.poisson}, and, subsequently, in Section \ref{s:proof.third.regime} we verify Theorem \ref{t:giant.fclt}. We give the proof of  Theorem \ref{t:sparse.fclt} in Section \ref{s:proof.second.regime}, while exploiting many of the results established in the former Section \ref{s:proof.third.regime}.

Before proceeding to specific subsections, we introduce some useful shorthand notations to save space. For $\bx = (x_1,\dots,x_m) \in (\bbr^d)^m$, $x \in \bbr^d$, and $\by = (y_1,\dots,y_{i-1})
\in (\bbr^d)^{i-1}$,
\begin{align*}
f(\bx) &:= f(x_1)\cdots f(x_m)\,, \\
f(x+\by) &:= f(x + y_1)\cdots f(x + y_{m-1})\,, \\
\hij (0,\by) &:= \hij (0,y_1,\dots,y_{i-1}) \ \ \text{etc.}
\end{align*}
Denote also by $C^*$ a generic positive constant, which can vary between lines and is independent of $n$.

\subsection{Palm Theory}

\begin{lemma} (Palm theory for Poisson point processes, \cite{arratia:goldstein:gordon:1989}, see also Section 1.7 in \cite{penrose:2003})  \label{l:palm}
Let $(X_i)$ be iid $\bbr^d$-valued random variables with common density $f$. Let $\Pn$ be a Poisson point process on $\bbr^d$ with intensity $nf$. Let $u(\Y, \mathcal X)$ and $v(\Yp, \X)$ be measurable bounded functions defined for $\Y \in (\bbr^d)^\ell$, $\Yp \in (\bbr^d)^m$, and a finite subset $\X  \supset \Y, \Yp$ of $d$-dimensional real vectors. Then,
\begin{align*}
\E \biggl\{ \sum_{\Y \subset \Pn}  u(\Y, \Pn) \biggr\} &= \frac{n^\ell}{\ell !}\, \E \bigl\{ u(\Yp, \Yp \cup \Pn) \bigr\}\,,
\end{align*}
where $\Yp$ is a set of $\ell$ iid points in $\bbr^d$ with density $f$, independent of $\Pn$.
Furthermore,
\begin{align*}
\E \biggl\{ \sum_{\Y \subset \Pn} \sum_{\Yp \subset \Pn, \, |\Y \cap \Yp| = 0} \hspace{-5pt} u(\Y, \Pn)\, v(\Yp, \Pn)\, \biggr\}
= \frac{n^{\ell + m}}{\ell ! \, m !}\, \E \Bigl\{u(\Y_1, \Y_{12} \cup \Pn)\, v(\Y_2, \Y_{12} \cup \Pn)\, \Bigr\}\,,
\end{align*}
where $\Y_1$ is a set of $\ell$ iid points in $\bbr^d$ and $\Y_2$ is a set of $m$ iid points in $\bbr^d$, such that $\Y_{12} := \Y_1 \cup \Y_2$ is independent of $\Pn$, and $|\Y_1 \cap \Y_2| = 0$, that is, there are no common points between $\Y_1$ and $\Y_2$.
\vspace{5pt}

Moreover, let $w_i(\Y)$, $i=1,2$ be measurable bounded functions defined for $\Y \in (\bbr^d)^p$. Then, for every $q \in \{ 0,\dots, p \}$,
\begin{align*}
\E \biggl\{  \, \sum_{\Y \subset \Pn} \sum_{\Yp \subset \Pn, \, |\Y \cap \Yp| = q} \hspace{-5pt} w_1(\Y)\, w_2(\Yp)\, \biggr\} = \frac{n^{2p-q}}{q! \bigl( (p-q)! \bigr)^2}\, \E \bigl\{  w_1(\Y_1)\, w_2(\Y_2) \bigr\},
\end{align*}
where $\Y_1$ and $\Y_2$ are sets of $p$ iid points in $\bbr^d$ with $|\Y_1 \cap \Y_2| = q$.
\end{lemma}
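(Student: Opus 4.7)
My plan is to derive all three identities as consequences of the Slivnyak--Mecke (or Campbell--Mecke) formula for Poisson point processes: for a Poisson process $\Pn$ on $\bbr^d$ with intensity $nf(x)\, dx$ and any nonnegative measurable $g$,
\[
\E\biggl[\sum_{(x_1,\dots,x_\ell) \in \Pn^{\ell}_{\neq}} g(x_1,\dots,x_\ell;\,\Pn)\biggr] = \int_{(\bbr^d)^\ell} \E\bigl[g(x_1,\dots,x_\ell;\, \Pn \cup \{x_1,\dots,x_\ell\})\bigr] \prod_{i=1}^\ell n f(x_i)\, dx_i,
\]
where the sum is over ordered tuples of distinct atoms of $\Pn$. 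First I would establish this formula in the case $\ell = 1$ by conditioning on $N_n$: given $N_n = N$, the atoms $X_1,\dots,X_N$ are iid with density $f$, so the conditional sum equals $N \cdot \E[g(X_1;\{X_1,\dots,X_N\})]$, and the identity $\P(N_n = N)/\P(N_n = N - 1) = n/N$ (together with the fact that $\{X_2,\dots,X_N\}$ conditioned on $N_n = N$ behaves as a Poisson process with intensity $nf$) assembles the claim. The general $\ell$ follows inductively, and passing from ordered tuples to unordered subsets $\Y$ introduces a factor of $1/\ell!$ in the usual way, giving the first identity of the lemma.

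For the second identity I would apply the Slivnyak--Mecke formula to the $\ell + m$ distinct atoms of $\Y \cup \Yp$, split the ordered tuple into its first $\ell$ and last $m$ coordinates (assigning them to $\Y$ and $\Yp$ respectively), and then convert each block separately to unordered subsets to obtain the prefactor $n^{\ell+m}/(\ell!\, m!)$; note that disjointness $|\Y \cap \Yp| = 0$ is automatic since all points in question must be distinct. For the third identity, where $|\Y \cap \Yp| = q$, I would parametrize the pair $(\Y, \Yp)$ by the triple $(\Z, \Y \setminus \Z, \Yp \setminus \Z)$ with $\Z := \Y \cap \Yp$ of size $q$ and the two remainders disjoint of size $p - q$; the union then has $2p - q$ distinct points, and the number of such decompositions of a fixed $(2p-q)$-set is the multinomial coefficient $(2p-q)!/(q!\,((p-q)!)^2)$. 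Applying Slivnyak--Mecke for $2p-q$ distinct points gives the factor $n^{2p-q}/(2p-q)!$, which combined with the multinomial yields the required $n^{2p-q}/(q!\,((p-q)!)^2)$. Since $w_1$ and $w_2$ depend only on the coordinates of their arguments and not on $\Pn$ itself, no ``ghost-point'' augmentation $\Pn \mapsto \Pn \cup \{\text{inserted points}\}$ is needed inside the expectation on the right-hand side.

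The main obstacle, if any, is purely combinatorial bookkeeping: getting the prefactors right in the overlapping-subset case and checking that the three multinomial coefficients balance correctly against the factor $1/(2p-q)!$ coming from the ordered-to-unordered conversion. The probabilistic input is the same in all three identities, so once the $\ell = 1$ Slivnyak--Mecke formula is in hand, the remainder reduces to careful counting plus a routine induction.
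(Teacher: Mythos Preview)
The paper does not supply its own proof of this lemma; it is quoted as a known result with citations to Arratia--Goldstein--Gordon and to Section~1.7 of Penrose. Your approach via the Slivnyak--Mecke formula is correct and is precisely the standard route (and is essentially what Penrose does in the cited section): condition on the Poisson count, use exchangeability to extract the factor $N(N-1)\cdots(N-\ell+1)$, and then shift the Poisson mass by the identity $\P(N_n=N)\,N!/(N-\ell)! = n^\ell\,\P(N_n=N-\ell)$ to produce an independent Poisson process plus $\ell$ fresh iid points. Your combinatorial bookkeeping for the second and third identities is also right; in particular, the block-permutation count $q!\,\bigl((p-q)!\bigr)^2$ in the overlapping case is exactly what converts the ordered Slivnyak--Mecke sum into the unordered double sum in the statement.
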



\subsection{Proof of Theorem \ref{t:sparse.poisson}}  \label{s:proof.first.regime}

Since \eqref{e:limit.barcode.first} immediately follows from \eqref{e:limit.betti.first} by the continuous mapping theorem, we may prove only \eqref{e:limit.betti.first}.
The proof of \eqref{e:limit.betti.first} is divided into two parts. In the first, we show that
\begin{align}
G_{k,n}(t) := \sum_{\Y \subset \Pn} h_t(\Y)\, \one \bigl\{ m(\Y) \geq R_n \bigr\} \Rightarrow V_k(t) \ \ \text{in } D[0,\infty),  \label{e:first.sparse.poisson}
\end{align}
where $m(x_1,\dots,x_{k+2}) = \min_{1 \leq i \leq k+2} \| x_i \|$, $x_i \in \bbr^d$, and, in the second, we prove that the difference between $G_{k,n}(t)$ and $\beta_{k,n}(t)$ vanishes in probability in the space $D[0,\infty)$.
\begin{proof}
\noindent \underline{\textit{Part I}} We begin with the finite-dimensional weak convergence of \eqref{e:first.sparse.poisson}, for which we need to verify
\begin{equation}  \label{e:CWdevice}
\sum_{j=1}^m a_j G_{k,n}(t_j) \Rightarrow \sum_{j=1}^m a_j V_k(t_j)
\end{equation}
for every $a_1, \dots, a_m \in \bbr$, $t_1, \dots, t_m \geq 0$, $m \geq 1$.

Let $\sum_\ell \epsilon_{v_\ell}$ denote a Poisson random measure on $\bbr$ with finite mean measure
$$
C_k \int_{(\bbr^d)^{k+1}} \one \Bigl\{ \, \sum_{j=1}^m a_j h_{t_j}(0,\by) \in \cdot \setminus \{ 0 \}\, \Bigr\}d\by
$$
(``$\epsilon$" represents the usual Dirac measure). It is then elementary to verify that
$$
\sum_{\ell} v_\ell \stackrel{d}{=} \sum_{j=1}^m a_j V_k(t_j).
$$
Writing $M_p(\bbr)$ for the space of point measures on $\bbr$, \eqref{e:CWdevice} will be complete, provided that we can show the point process convergence
\begin{align}
\xi_n &:= \sum_{\Y \subset \Pn} \one \Bigl\{ \, \sum_{j=1}^m a_j h_{t_j}(\Y) \neq 0, \ m(\Y) \geq R_n\, \Bigr\}\, \epsilon_{\Bigl(\, \sum_{j=1}^m a_j h_{t_j}(\Y)\, \Bigr)} \label{e:point.proc.conv} \\
&\Rightarrow \sum_\ell \epsilon_{v_\ell} \ \ \text{in } M_p(\bbr). \notag
\end{align}
Indeed, since the functional $\widehat T: M_p(\bbr) \to \bbr$ defined by $\widehat T \bigl( \sum_\ell \epsilon_{z_\ell} \bigr) = \sum_\ell z_\ell$ is continuous on a set of \textit{finite} point measures, \eqref{e:point.proc.conv} implies \eqref{e:CWdevice} by the continuous mapping theorem.

According to \cite{decreusefond:schulte:thaele:2015} (or use Theorem 2.1 in \cite{owada:adler:2016}), in order to establish \eqref{e:point.proc.conv}, it suffices to prove the following results: as $n\to\infty$,
\begin{equation}  \label{e:1st.cond.pp}
\E \bigl\{ \xi_n(A) \bigr\} \to \E \Bigl\{ \sum_\ell \epsilon_{v_\ell}(A) \Bigr\} \ \ \text{for every measurable } A \subset (\bbr^d)^{k+1},
\end{equation}
and
\begin{align}
r_n := \max_{1 \leq \ell \leq k+1} n^{2k+4-\ell} \P &\Bigl\{ \ \sum_{j=1}^m a_j h_{t_j}(X_1,\dots,X_{k+2}) \neq 0, \label{e:2nd.cond.pp} \\
&\quad \sum_{j=1}^m a_j h_{t_j}(X_1,\dots,X_\ell, X_{k+3}, \dots, X_{2k+4-\ell}) \neq 0, \notag \\
&\quad \|X_i \| \geq R_n, \, i=1,\dots,2k+4-\ell\, \Bigr\} \to 0. \notag
\end{align}
For the proof of \eqref{e:1st.cond.pp}, it follows from the Palm theory in Lemma \ref{l:palm} that
\begin{align*}
\E \bigl\{ \xi_n(A) \bigr\} &= \frac{n^{k+2}}{(k+2)!}\, \int_{(\bbr^d)^{k+2}} f(\bx)\, \one \bigl\{ m(\bx) \geq R_n \bigr\}\, \one \Bigl\{ \ \sum_{j=1}^m a_j h_{t_j}(\bx) \in A \setminus \{ 0 \}\, \Bigr\} d\bx.
\end{align*}
Changing the variables $x_1 \leftrightarrow x$, $x_\ell \leftrightarrow x+y_{\ell-1}$, $\ell = 2,\dots,k+2$, together with the location invariance of $h_{t_j}$'s,
\begin{align}
\E \bigl\{ \xi_n(A) \bigr\} = \frac{n^{k+2}}{(k+2)!}\, \int_{\bbr^d}\int_{(\bbr^d)^{k+1}} &f(x)\, f(x+\by)\, \one \bigl\{ \,m(x,x+\by) \geq R_n\, \bigr\} \label{e:1st.change} \\
&\times \one \Bigl\{ \ \sum_{j=1}^m a_j h_{t_j}(0, \by) \in A \setminus \{ 0 \}\, \Bigr\} d\by dx. \notag
\end{align}
The polar coordinate transform $x \leftrightarrow (r,\theta)$, followed by an additional change of variable $r \leftrightarrow R_n\rho$, yields
\begin{align}
\E \bigl\{ \xi_n(A) \bigr\} &= \frac{n^{k+2}R_{k,n}^df(R_{k,n}e_1)^{k+2}}{(k+2)!} \int_{S_{d-1}} J(\theta) d\theta \int_1^{\infty} d\rho \int_{(\bbr^d)^{k+1}} \hspace{-5pt} d\by \,  \label{e:polar1} \\
&\qquad \times \rho^{d-1} \frac{f(R_n\rho e_1)}{f(R_ne_1)}\, \prod_{\ell = 1}^{k+1}\, \frac{f \bigl( R_n \| \rho \theta + y_\ell /R_n \|e_1 \bigr)}{f(R_ne_1)}\, \one \bigl\{ \| \rho \theta + y_\ell /R_n \| \geq 1 \bigr\} \notag \\
&\qquad \times \one \Bigl\{ \ \sum_{j=1}^m a_j h_{t_j}(0, \by) \in A \setminus \{ 0 \}\, \Bigr\}, \notag
\end{align}
where $S_{d-1}$ is the $(d-1)$-dimensional unit sphere in $\bbr^d$ and $J(\theta)$ is the usual Jacobian, that is,
$$
J(\theta)  =   \sin^{k-2}(\theta_1) \, \sin^{k-3}(\theta_2) \, \cdots\, \sin (\theta_{k-2}).
$$
By the regular variation assumption \eqref{e:RV.tail} of $f$, we have that for every $\rho \geq 1$, $\theta \in S_{d-1}$, and $y_1,\dots, y_{k+1} \in \bbr^d$,
$$
\frac{f(R_n\rho e_1)}{f(R_ne_1)}\, \prod_{\ell = 1}^{k+1}\, \frac{f \bigl( R_n \| \rho \theta + y_\ell /R_n \|e_1 \bigr)}{f(R_ne_1)} \to \rho^{-\alpha (k+2)}, \ \ \ n\to\infty.
$$
Therefore, supposing the dominated convergence theorem is applicable, we can obtain
\begin{align*}
\E \bigl\{ \xi_n(A) \bigr\} &\to C_k \int_{(\bbr^d)^{k+1}} \one \Bigl\{ \, \sum_{j=1}^m a_j h_{t_j}(0,\by) \in A \setminus \{ 0 \}\, \Bigr\}d\by
\\
&= \E \Bigl\{ \sum_\ell \epsilon_{v_\ell} (A) \Bigr\}, \ \ \ n\to\infty.
\end{align*}

To establish an integrable upper bound, we use the so-called Potter's bound (e.g., Proposition 2.6 $(ii)$ in \cite{resnick:2007}); for every $0 < \xi < \alpha-d$, we have 
\begin{equation}  \label{e:P-Potter1}
\frac{f(R_n\rho e_1)}{f(R_ne_1)}\, \one \{  \rho \geq 1\} \leq (1+\xi) \rho^{-\alpha +\xi}  \one \{  \rho \geq 1\},
\end{equation}
\begin{equation}  \label{e:P-Potter2}
\prod_{\ell = 1}^{k+1}\, \frac{f \bigl( R_n \| \rho \theta + y_\ell /R_n \|e_1 \bigr)}{f(R_ne_1)}\, \one \bigl\{ \| \rho \theta + y_\ell /R_n \| \geq 1 \bigr\} \leq (1+\xi)^{k+1}
\end{equation}
for sufficiently large $n$. Since $\int_1^\infty \rho^{d-1-\alpha +\xi}d\rho < \infty$, the dominated convergence theorem applies as required.

As for the proof of \eqref{e:2nd.cond.pp}, proceeding by changing the variables in the same way as the previous argument, we see that as $n\to\infty$,
$$
r_n = \max_{1 \leq \ell \leq k+1} \mathcal O \bigl(n^{2k+4-\ell} R_{k,n}^d f(R_{k,n}e_1)^{2k+4-\ell} \bigr) = \max_{1 \leq \ell \leq k+1} \mathcal O \Bigl(\bigl(n  f(R_{k,n}e_1)\bigr)^{k+2-\ell} \Bigr) \to 0.
$$
Now, the claim is proved.

Next, we show the tightness of $\bigl( G_{k,n}(t), \, t\geq 0 \bigr)$ in the space $D[0,\infty)$ equipped with the Skorohod $J_1$-topology. By Theorem 13.4 in \cite{billingsley:1999}, it suffices to show that for every $L>0$, there exists $B>0$ such that
$$
\P \biggl\{ \min \Bigl\{ \bigl|  G_{k,n}(t) - G_{k,n}(s) \bigr|, \, \bigl|  G_{k,n}(s) - G_{k,n}(r) \bigr| \Bigr\} \geq \lambda \biggr\} \leq B \lambda^{-2} (t-r)^2
$$
for all $0 \leq r \leq s \leq t \leq L$, $n \geq 1$, and $\lambda > 0$. For typographical ease, define for $n \geq 1$ and $0 \leq s \leq t$,
\begin{align}
h_{n,t}(\Y) := h_t (\Y)\, \one \bigl\{  m(\Y) \geq R_{k,n} \bigr\}, \ \ \Y \in (\bbr^d)^{k+2}, \notag \\
h_{t,s}(\Y) := h_t (\Y) - h_s(\Y), \ \ \Y \in (\bbr^d)^{k+2}, \notag \\
h_{t,s}^{\pm}(\Y) := h_t^{\pm} (\Y) - h_s^{\pm}(\Y), \ \ \Y \in (\bbr^d)^{k+2}. \label{e:def.htspm}
\end{align}

By Markov's inequality, we only have to show that
\begin{equation}  \label{e:tightness.poisson}
\E \biggl\{ \, \sum_{\Y \subset \Pn} \sum_{\Yp \subset \Pn} \bigl| h_{n,t}(\Y) - h_{n,s}(\Y) \bigr|\, \bigl| h_{n,s}(\Yp) - h_{n,r}(\Yp) \bigr| \biggr\} \leq B(t-r)^2
\end{equation}
for all $0 \leq r \leq s \leq t \leq L$ and $n \geq 1$. The left hand side above is clearly equal to
\begin{align*}
&\sum_{\ell = 0}^{k+2} \E \biggl\{ \, \sum_{\Y \subset \Pn} \sum_{\Yp \subset \Pn, \, |\Y \cap \Yp| = \ell} \bigl| h_{n,t}(\Y) - h_{n,s}(\Y) \bigr|\, \bigl| h_{n,s}(\Yp) - h_{n,r}(\Yp) \bigr|\, \biggr\} := \sum_{\ell =0}^{k+2} \E \{ I_{n,\ell} \}.
\end{align*}
For $\ell = 1,\dots,k+2$, the Palm theory yields
$$
\E \{ I_{n,\ell} \} = \frac{n^{2k+4-\ell}}{\ell !\, \bigl( (k+2-\ell)! \bigr)^2}\, \E \Bigl\{ \bigl|  h_{n,t}(\Y_1) - h_{n,s}(\Y_1) \bigr|\,  \bigl|  h_{n,s}(\Y_2) - h_{n,r}(\Y_2) \bigr|\,  \Bigr\},
$$
where $\Y_1$ and $\Y_2$  are sets of $(k+2)$ iid points in $\bbr^d$ sharing $\ell$ common points, that is, $|  \Y_1 \cap \Y_2 | = \ell$. By the same change of variables as in \eqref{e:1st.change} and \eqref{e:polar1}, together with \eqref{e:Rn.regime1} and Potter's bound, we eventually have
\begin{align*}
\E \{  I_{n,\ell}\} &\leq C^* \int_{(\bbr^d)^{\ell -1}}d \by \int_{(\bbr^d)^{k+2-\ell}}\hspace{-10pt} d \bz_2 \int_{(\bbr^d)^{k+2-\ell}}\hspace{-10pt}d \bz_1 \bigl| h_{t,s}(0,\by,\bz_1)  \bigr|\, \bigl| h_{s,r}(0,\by,\bz_2)  \bigr|  \\
&\leq C^* \int_{(\bbr^d)^{\ell -1}}d \by \int_{(\bbr^d)^{k+2-\ell}}\hspace{-10pt} d \bz_2 \int_{(\bbr^d)^{k+2-\ell}}\hspace{-10pt}d \bz_1 \\
&\qquad \qquad \qquad \quad \bigl( \, h_{t,s}^+(0,\by,\bz_1)\, h_{s,r}^+(0,\by,\bz_2) + h_{t,s}^-(0,\by,\bz_1)\, h_{s,r}^-(0,\by,\bz_2) \\
&\qquad \qquad \qquad \qquad + h_{t,s}^+(0,\by,\bz_1)\, h_{s,r}^-(0,\by,\bz_2) + h_{t,s}^-(0,\by,\bz_1)\, h_{s,r}^+(0,\by,\bz_2) \, \bigr)
\end{align*}
Applying Lemma \ref{l:tightness.lemma} below, the rightmost term is bounded by $C^* (t-r)^2$, as required.

We need to establish a suitable upper bound for $\E \{ I_{n,0} \}$ as well. By the Palm theory,
$$
\E \{  I_{n,0}\} = \frac{n^{2k+4}}{\bigl( (k+2)! \bigr)^2}\, \E \Bigl\{  \bigl| h_{n,t}(\Y) - h_{n,s}(\Y)  \bigr| \Bigr\}\, \E \Bigl\{  \bigl| h_{n,s}(\Y) - h_{n,r}(\Y)  \bigr| \Bigr\},
$$
and the same argument as above can provide an upper bound of the form $C^*(t-r)^2$. Now, we can conclude \eqref{e:tightness.poisson}.
\vspace{5pt}

\noindent \underline{\textit{Part II}} To complete the proof, one needs to show that
\begin{equation}  \label{e:goal.part2}
 \beta_k \Bigl( \check{C}\bigl( \Pn \cap B(0;R_{k,n})^c; t \bigr) \Bigr) - G_{k,n}(t) \stackrel{p}{\to} 0 \ \ \text{in } D[0,\infty).
\end{equation}
To this end, we use obvious inequalities
$$
G_{k,n}(t) \leq \beta_k \Bigl( \check{C}\bigl( \Pn \cap B(0;R_{k,n})^c; t \bigr) \Bigr) \leq G_{k,n}(t) + L_{k,n}(t),
$$
where
$$
L_{k,n}(t) = \sum_{\Y \subset \Pn} \one \bigl\{  |\Y| = k+3, \ \check{C}(\Y; t) \text{ is connected}  \bigr\} \times \one \bigl\{  m(\Y) \geq R_{k,n} \bigr\}
$$
with $m(x_1,\dots, x_{k+3}) = \min_{1 \leq i \leq k+3} \| x_i \|$, $x_i \in \bbr^d$. \\
We have, for every $T>0$,
\begin{align*}
\E &\biggl\{  \, \sup_{0 \leq t \leq T} \Bigl[\,  \beta_k \Bigl( \check{C}\bigl( \Pn \cap B(0;R_{k,n})^c; t \bigr) \Bigr) - G_{k,n}(t) \Bigr]  \biggr\} \leq  \E \bigl\{ \, \sup_{0 \leq t \leq T} L_{k,n}(t) \bigr\}  \\
&\leq \frac{n^{k+3}}{(k+3)!}\, \P \Bigl\{  \check{C}(X_1,\dots,X_{k+3}; T) \text{ is connected}, \ \| X_i \| \geq R_{k,n}, \, i =1,\dots,k+3\,  \Bigr\}
\end{align*}
The same change of variables as in \eqref{e:1st.change} and \eqref{e:polar1}, together with Potter's bound, concludes that the rightmost term above turns out to be
$$
\mathcal O \bigl( n^{k+3} R_{k,n}^d f(R_{k,n}e_1)^{k+3} \bigr) = \mathcal O \bigl( n f(R_{k,n}e_1) \bigr) \to 0 \ \ \text{as } n \to \infty.
$$
Thus, \eqref{e:goal.part2} follows.
\end{proof}
\begin{lemma}  \label{l:tightness.lemma}
Let $h_t, h_t^{\pm} : (\bbr^d)^{k+2} \to \{ 0,1 \}$ be indicator functions given in \eqref{e:def.ht} and \eqref{e:decomp.ind}, and recall notation \eqref{e:def.htspm}. Fix $L>0$. Then, we have, for $\ell \in \{ 1,\dots,k+2 \}$,
\begin{align*}
\int_{(\bbr^d)^{\ell -1}}d \by &\int_{(\bbr^d)^{k+2-\ell}}\hspace{-10pt} d \bz_2 \int_{(\bbr^d)^{k+2-\ell}}\hspace{-10pt}d \bz_1 \Bigl( h_{t,s}^+(0,\by,\bz_1)\, h_{s,r}^+(0,\by,\bz_2)  + h_{t,s}^-(0,\by,\bz_1)\,h_{s,r}^-(0,\by,\bz_2)    \\
&\qquad \quad  + h_{t,s}^+(0,\by,\bz_1)\, h_{s,r}^-(0,\by,\bz_2) + h_{t,s}^-(0,\by,\bz_1)\, h_{s,r}^+(0,\by,\bz_2) \Bigr) \leq C^*(t-r)^2
\end{align*}
for all $0 \leq r \leq s \leq t \leq L$.
\end{lemma}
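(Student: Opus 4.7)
The plan is to decouple the $\bz_1$- and $\bz_2$-integrals by Fubini, reducing the lemma to bounding, for each $(\epsilon,\epsilon')\in\{+,-\}^2$,
\begin{equation*}
I_{\epsilon,\epsilon'} := \int_{(\bbr^d)^{\ell-1}} A_\epsilon(\by)\,B_{\epsilon'}(\by)\,d\by,
\end{equation*}
where $A_\epsilon(\by) := \int h_{t,s}^\epsilon(0,\by,\bz)\,d\bz$ and $B_{\epsilon'}(\by) := \int h_{s,r}^{\epsilon'}(0,\by,\bz)\,d\bz$. The endpoint cases are immediate. For $\ell = 1$, $\by$ is empty, and the scaling identity $\int h_u^\pm(0,\bz)\,d\bz = c_\pm\,u^{d(k+1)}$ together with AM--GM yields the quadratic bound directly. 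For $\ell = k+2$, both $\bz_i$ are empty and the monotonicity of $h_u^\pm$ in $u$, combined with the pointwise inequality $h_u^- \le h_u^+$, forces each of the four pointwise products to vanish identically (e.g., $h_{t,s}^+(0,\by)=1$ forces $h_s^+(0,\by)=0$, while $h_{s,r}^-(0,\by)=1$ forces $h_s^-(0,\by)=1$ and hence $h_s^+(0,\by)=1$, a contradiction).

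In the generic range $1 < \ell < k+2$, the key step is
\begin{equation*}
I_{\epsilon,\epsilon'} \le \|A_\epsilon\|_\infty \cdot \|B_{\epsilon'}\|_{L^1(d\by)} \le C(t-s)(s-r) \le \tfrac{C}{4}(t-r)^2,
\end{equation*}
so I need (a) $\|B_{\epsilon'}\|_{L^1(d\by)} \le C(s-r)$ and (b) $\|A_\epsilon\|_\infty \le C(t-s)$. Claim (a) is immediate from Fubini and the scaling identity, since $\int B_{\epsilon'}\,d\by = c_{\epsilon'}\,(s^{d(k+1)} - r^{d(k+1)}) \le C(s-r)$ on $[0,L]$ by the mean value theorem.

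Claim (b) is the heart of the argument. Using $h_u^\pm(\bx) = h_1^\pm(\bx/u)$ and the substitution $\bz \mapsto u\bz$ term-by-term,
\begin{equation*}
A_\epsilon(\by) = F_\epsilon(t) - F_\epsilon(s),\quad F_\epsilon(u) := u^{d(k+2-\ell)}\,\phi_\epsilon(\by/u),\quad \phi_\epsilon(\mathbf{v}) := \int h_1^\epsilon(0,\mathbf{v},\mathbf{w})\,d\mathbf{w},
\end{equation*}
so (b) reduces to Lipschitz continuity of $u \mapsto F_\epsilon(u)$ on $[0,L]$, uniformly in $\by$. A formal differentiation gives
\begin{equation*}
F_\epsilon'(u) = u^{d(k+2-\ell)-1}\,\bigl[d(k+2-\ell)\,\phi_\epsilon(\mathbf{v}) - \mathbf{v}\cdot\nabla\phi_\epsilon(\mathbf{v})\bigr]\big|_{\mathbf{v}=\by/u},
\end{equation*}
whose bracket depends only on $\mathbf{v}$ (so is uniformly bounded once $\phi_\epsilon$ is Lipschitz with compact support); the prefactor is bounded on $[0,L]$ since $\ell \le k+1$ yields $d(k+2-\ell) \ge d \ge 1$.

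The main obstacle is thus the Lipschitz continuity of $\phi_\epsilon$ on $(\bbr^d)^{\ell-1}$. I would argue this geometrically: the indicator $h_1^\epsilon(0,\mathbf{v},\mathbf{w})$ is cut out by finitely many smooth inequalities expressing common nonempty intersection of closed unit balls, so the slice $\{\mathbf{w}: h_1^\epsilon(0,\mathbf{v},\mathbf{w}) = 1\}$ has piecewise smooth boundary of bounded codimension-one Hausdorff measure; a perturbation $\mathbf{v} \mapsto \mathbf{v} + \delta$ translates this boundary by at most $O(|\delta|)$, so the volume $\phi_\epsilon(\mathbf{v})$ changes by at most $O(|\delta|)$. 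This yields the required Lipschitz bound, and together with the boundary cases completes the proof.
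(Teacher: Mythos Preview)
Your decoupling strategy for $1\le\ell\le k+1$ is sound, and the scaling/Lipschitz argument for $\phi_\epsilon$ can be made rigorous along the lines you sketch. The gap is at $\ell=k+2$.

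You claim that when both $\bz_i$ are absent the four pointwise products vanish identically, but this fails for the term $h_{t,s}^{-}(0,\by)\,h_{s,r}^{+}(0,\by)$. Writing $\rho_-(0,\by)$ for the radius at which $\bigcap_j B(x_j;\cdot)$ first becomes nonempty and $\rho_+(0,\by)$ for the radius at which every $(k+1)$-subfamily first has nonempty intersection, one has $\rho_+\le\rho_-$, and the product equals $\one\{\rho_+\in(r,s],\ \rho_-\in(s,t]\}$. This is perfectly consistent: take $d=2$, $k=1$, and an equilateral triangle of side $a$; then $\rho_+=a/2$ and $\rho_-=a/\sqrt3$, so any $a\in(\sqrt3\,s,2s]\cap(2r,\sqrt3\,t]$ (nonempty for, say, $r=0.9$, $s=1$, $t=1.1$) makes the product equal to $1$. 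Your monotonicity/ordering trick only rules out the other three products; it cannot handle this one, and your generic-range argument does not extend here either, since for $\ell=k+2$ the factor $A_\epsilon(\by)=h_{t,s}^\epsilon(0,\by)$ is an indicator, not a Lipschitz function of the scale.

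To close the gap you must show $\mathrm{Leb}\{\rho_+\in(r,s],\ \rho_-\in(s,t]\}\le C(t-r)^2$. One way is to prove that the pushforward of Lebesgue measure on $(\bbr^d)^{k+1}$ under $\by\mapsto(\rho_+,\rho_-)$ has a locally bounded density on $\{\rho_+\le\rho_-\}$ (a coarea/submersion argument for these semi-algebraic functions), after which the bound is immediate; alternatively, scale by $1/t$ and show $\mathrm{Leb}\{\rho_+>r',\ \rho_-\le1\}=O((1-r')^2)$ as $r'\uparrow1$. The paper itself does not give a self-contained argument here: it invokes Lemma~7.1 of \cite{owada:2016} for $I_1,I_2$ and asserts the same method handles $I_3,I_4$, so your direct route is genuinely different and more explicit---but it needs this additional piece.
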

\begin{proof}
Let $I_1 + I_2 + I_3 + I_4$ denote the triple integral on the left hand side. It follows from Lemma 7.1 in \cite{owada:2016} that $I_i \leq C^* (t-r)^2$ for $i=1,2$. The same argument can yield $I_i \leq C^* (t-r)^2$ for $i=3,4$ as well.
\end{proof}

\subsection{Proof of Theorem \ref{t:giant.fclt}}  \label{s:proof.third.regime}

The goal of this subsection is to complete the proof of Theorem \ref{t:giant.fclt}. The proof is, however, rather long, and therefore it is divided into several parts.

First, we define for $i \geq k+2$, $j \geq 1$, $t\geq0$, and $n\geq1$,
$$
\hij_{n,t}(\Y) := h_t^{(i,j)}(\Y)\, \one \bigl\{ m(\Y) \geq R_n \bigr\}, \ \ \ \Y \in (\bbr^d)^i,
$$
where $h_t^{(i,j)}$ is given in \eqref{e:def.hijt}, and $m(x_1,\dots,x_m) = \min_{1 \leq \ell \leq m} \| x_\ell \|$, $x_1,\dots, x_m \in \bbr^d$, $m \geq 1$.
Next, define for $i \geq k+2$, $j\geq1$, $t\geq 0$, $\Y \in (\bbr^d)^i$, and a finite subset of $d$-dimensional real vectors $\mathcal Z \supset \Y$
$$
\gij_t(\Y, \mathcal Z) := \hij_t(\Y)\, \one \bigl\{ \check{C}(\Y; t) \text{ is an isolated component of } \check{C}(\mathcal Z; t)  \bigr\}
$$
and
$$
\gij_{n,t}(\Y, \mathcal Z) := \gij_t(\Y, \mathcal Z)\, \one \bigl\{ m(\Y) \geq R_n \bigr\}.
$$
Throughout the proof, we rely on a useful representation for the $k$-th Betti number adopted in \cite{kahle:meckes:2015}
$$
\beta_{k,n}(t) = \sum_{i=k+2}^\infty \sum_{j\geq1} j \sum_{\Y \subset \Pn} \gij_{n,t}(\Y, \Pn).
$$
Let Ann$(K_1,K_2)$ be an annulus of inner radius $K_1$ and outer radius $K_2$. For $x_1,\dots,x_m \in \bbr^d$, $m \geq 1$, define Max$(x_1,\dots,x_m)$ as the function selecting an element with largest distance from the origin. That is, Max$(x_1,\dots,x_m) = x_\ell$ if $\| x_\ell \| = \max_{1 \leq j \leq m}\| x_j \|$. If multiple $x_j$'s achieve the maximum, we choose an element with the smallest subscript.
The following quantity is associated with the $k$-th Betti number and plays an important role in our proof. For $1 \leq K \leq \infty$,
$$
\beta_{k,n}(t; K) := \sum_{i=k+2}^\infty \sum_{j\geq1} j \sum_{\Y \subset \Pn} \gij_{n,t}(\Y, \Pn)\, \one \bigl\{ \text{Max}(\Y) \in \text{Ann}(R_n, KR_n) \bigr\}.
$$
Clearly, $\beta_{k,n}(t; \infty) = \beta_{k,n}(t)$. Furthermore, we sometimes need a truncated Betti number
\begin{equation}  \label{e:truncated.betti}
\beta_{k,n}^{(M)}(t) := \sum_{i=k+2}^M \sum_{j\geq1} j \sum_{\Y \subset \Pn} \gij_{n,t}(\Y, \Pn).
\end{equation}
Analogously, we can also define $\beta_{k,n}^{(M)}(t; K)$ by the truncation.

We start with revealing the asymptotics of the mean and the covariance of the Betti numbers.
\begin{lemma}  \label{l:giant.cov}
For every $0 \leq  t, s \leq 1$ and $1\leq K\leq \infty$, we have, as $n\to \infty$,
$$
R_n^{-d}\, \E \bigl\{ \beta_{k,n}(t; K) \bigr\} \to \sum_{i=k+2}^\infty \sum_{j\geq1}\, j\, \frac{\lambda^i}{i!}\, \muijj_k (t,t,\lambda; K) \in (0,\infty),
$$
\begin{align*}
R_n^{-d}\, &\text{Cov} \bigl\{ \beta_{k,n}(t; K), \beta_{k,n}(s; K) \bigr\} \\
&\to C_k (t,s; K) := \sum_{i=k+2}^\infty \sum_{j, \jp \geq 1}\, j \jp\, \frac{\lambda^i}{i!}\, \muijjp_k (t,s,\lambda; K)  \\
&\qquad \qquad \qquad \quad +\sum_{i, \ip = k+2}^\infty \sum_{j, \jp \geq1} \, j \jp\, \frac{\lambda^{i + \ip}}{i! \, \ip!} \, \xiijipjp_k(t,s,\lambda; K) \in (-\infty,\infty)
\end{align*}
with
\begin{align}
\muijjp_k (t,s,\lambda; K) &:= s_{d-1} \int_1^K \rho^{d-1-\alpha i} \int_{(\bbr^d)^{i-1}} \hspace{-5pt} \hij_t(0,\by)\, \hijp_s(0,\by)  \label{e:def.muijjp1} \\
& \qquad \quad \times e^{-\lambda \rho^{-\alpha} (s \vee t)^d \text{vol} \bigl( \B (0,\by; 1) \bigr)} d\by d\rho, \notag
\end{align}
\begin{align*}
\xiijipjp_k(t,s,\lambda; K) &:= s_{d-1} \int_1^K \rho^{d-1-\alpha (i+\ip)} \int_{(\bbr^d)^{i+\ip-1}} \hspace{-5pt} \hijipjp_{t,s}(0,\by) \label{e:def.xiijipjp1} \\
&  \qquad \times \biggl[ \Bigl( \one_{\Diip (t,s)}(0,\by) - \one_{\Diip( (t\vee s) /2)} (0,\by) \Bigr)\, \notag \\
&\qquad \times e^{-\lambda \rho^{-\alpha} \text{vol} \bigl( \B(0,y_1,\dots y_{i-1}; t) \cup \B (y_i, \dots, y_{i+\ip-1}; s) \bigr)}- \one_{\Diip (t,s)}(0,\by)  \notag \\
& \qquad  \times e^{-\lambda \rho^{-\alpha} \bigl[ \text{vol}\bigl(\B(0,y_1,\dots y_{i-1}; t)\bigr) +  \text{vol}\bigl(\B(y_i,\dots y_{i+\ip-1}; s)\bigr) \bigr]}   \biggr] d\by d\rho. \notag
\end{align*}
In terms of notations \eqref{e:def.muijjp} and \eqref{e:def.xiijipjp}, we have $\muijjp_k(t,s,\lambda) = \muijjp_k(t,s,\lambda; \infty)$ and $\xiijipjp_k(t,s,\lambda) = \xiijipjp_k(t,s,\lambda; \infty)$.
\end{lemma}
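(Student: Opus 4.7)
The plan is to apply Lemma \ref{l:palm} to the representation
\[
\beta_{k,n}(t;K) = \sum_{i \geq k+2}\sum_{j \geq 1} j \sum_{\Y \subset \Pn} \gij_{n,t}(\Y,\Pn)\,\one\{\text{Max}(\Y) \in \text{Ann}(R_n,KR_n)\},
\]
then perform a polar change of variables ($x = R_n\rho\theta$ for the Max-vertex, with the remaining $i-1$ vertices translated to it), and pass to the limit using the regular variation of $f$ together with Potter's bound \eqref{e:P-Potter1}--\eqref{e:P-Potter2}. The isolation factor built into $\gij_{n,t}$ produces the Poisson void probability $e^{-n\int_{\B(\Y;t)}f}$, which converges to $e^{-\lambda\rho^{-\alpha}t^d\,\text{vol}(\B(0,\by;1))}$, since $nf(R_n\rho\theta)\to \lambda\rho^{-\alpha}$ and $f$ is asymptotically constant on the radius-$t$ neighborhood of $\Y$ at scale $R_n$.

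For the mean, first-moment Palm gives, for each $(i,j)$,
\[
\E\biggl\{\sum_{\Y\subset\Pn}\gij_{n,t}(\Y,\Pn)\,\one\{\text{Max}(\Y)\in\text{Ann}\}\biggr\} = \frac{n^i}{i!}\,\E\bigl\{\hij_t(\Y_i)\,\one\{\text{Max}(\Y_i)\in\text{Ann}\}\,e^{-n\int_{\B(\Y_i;t)}f}\bigr\},
\]
with $\Y_i$ an iid sample of density $f$. Since connectedness of $\check{C}(\Y;t)$ forces diameter at most $(i-1)t$, the centered configuration $\by$ lies in a bounded region, and the change of variables produces the Jacobian $R_n^d\rho^{d-1}J(\theta)$. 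After dividing by $R_n^d$ and applying dominated convergence (with Potter's bound supplying the integrable majorant), the integrand converges to the one defining $(\lambda^i/i!)\,\muijj_k(t,t,\lambda;K)$; monotone convergence, valid since all terms are non-negative, handles the outer sum over $(i,j)$.

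For the covariance I would decompose $\sum_\Y\sum_\Yp$ according to the overlap $q = |\Y\cap\Yp|$. The key combinatorial observation is that when $q\ge 1$, two isolated $\check{C}$-components sharing a vertex must coincide, so $\Y=\Yp$, $i=\ip$ and $q=i$; this contribution collapses to a first-moment calculation with joint isolation exponential $e^{-n\int_{\B(\Y;t\vee s)}f}$, yielding the $\delta_{i,\ip}(\lambda^i/i!)\muijjp_k(t,s,\lambda;K)$ term. For $q=0$, the disjoint Palm formula converts the double sum into a two-sample expectation; subtracting $\E[\sum g_t]\,\E[\sum g_s]$ produces a difference that vanishes identically on the complement of $\Diip(t,s)$ (since there the two void exponentials factor exactly) and yields the integrand of $\xiijipjp_k$ on $\Diip(t,s)$. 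The subtraction of $\one_{\Diip((t\vee s)/2)}$ removes the sub-configurations in which $\Y_1,\Y_2$ lie within each other's isolation radii, for then $\gij_t(\Y_1,\cdot)\,\gipjp_s(\Yp,\cdot)\equiv 0$ and only the product-of-means piece survives.

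The main obstacle will be justifying the interchange of the $n\to\infty$ limit with the infinite sums over $(i,\ip)$. A Cayley-type bound on the number of connected radius-$t$ configurations on $i$ points (spanning trees with a volume bound per edge) of order $i^{i-2}(\omega_d t^d)^{i-1}$, combined with the Poisson weight $\lambda^i/i!$, produces by Stirling a tail $\sum_i(e\lambda\omega_d t^d)^i/i^{5/2}$ that converges precisely when $e\lambda\omega_d t^d < 1$; for $t\in[0,1]$ this is secured by the hypothesis $\lambda\in(0,(e\omega_d)^{-1})$ of Theorem \ref{t:giant.fclt}. This summability supplies the integrable majorant required for both dominated convergence and the finiteness of $C_k(t,s;K)$, while positivity in the mean case is immediate from termwise non-negativity of $\muijj_k(t,t,\lambda;K)$.
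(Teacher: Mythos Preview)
Your plan is essentially the paper's own: Palm theory, the polar change of variables with Potter's bound for the dominated-convergence majorant, the overlap decomposition for the covariance (with the key observation that shared vertices force $\Y=\Yp$), and the Cayley/Stirling tail bound for summability over $i$. Two points need tightening.

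First, monotone convergence does \emph{not} handle passing the limit $n\to\infty$ through the outer sum over $(i,j)$: the individual terms are not monotone in $n$. Non-negativity lets you interchange $\sum_{i,j}$ with the expectation (Tonelli), but to interchange $\lim_n$ with $\sum_{i,j}$ you need a summable bound that is \emph{uniform in $n$}. This is exactly why the paper extracts from Potter's bound a majorant of the form $C^*\bigl(\lambda(1+\delta)\bigr)^i\int h_t^{(i,j)}(0,\by)\,d\by$ valid for all $n\ge N$ with $N$ independent of $i$, and then applies dominated convergence to the sum.

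Second, your tail estimate $\sum_i (e\lambda\omega_d t^d)^i/i^{5/2}$ omits two contributions. You dropped the weight $j$ in $\sum_j j\,h_t^{(i,j)}$; after summing over $j$ this contributes a factor $\binom{i}{k+2}$ (the maximal $k$-th Betti number on $i$ vertices), so the polynomial prefactor is $i^{k}$ rather than $i^{-5/2}$. And Potter's bound gives $(1+\xi)^i$, not $1$, so the base of the exponential is $\lambda(1+\delta)e\omega_d$ for some small $\delta>0$; convergence therefore requires $\lambda(1+\delta)e\omega_d<1$, which is why the standing hypothesis is the strict inequality $\lambda<(e\omega_d)^{-1}$ rather than $\le$. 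Neither issue changes the strategy, but both are needed to make the dominated-convergence step rigorous.
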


To prove Lemma \ref{l:giant.cov}, we require the results for Lemmas \ref{l:conv.upper.lemma} and \ref{l:geo.lemma} below, for which we refine the ideas and techniques used in \cite{kahle:meckes:2015} and \cite{kahle:meckes:2013}. Without any loss of generality, we may prove only the case $K=\infty$.
\begin{proof}
By the monotone convergence theorem, together with the Palm theory in Lemma \ref{l:palm}, we have
$$
R_n^{-d}\, \E \bigl\{ \beta_{k,n}(t) \bigr\} = \sum_{i=k+2}^\infty \sum_{j\geq1} \, j\, R_n^{-d}\, \frac{ n^i}{i!}\, \E \bigl\{ \gij_{n,t}(\Yp, \Yp \cup \Pn) \bigr\},
$$
where $\Yp$ is a set of iid points in $\bbr^d$ with density $f$, independent of $\Pn$. \\
It follows from Lemma \ref{l:conv.upper.lemma} $(i)$ that
$$
R_n^{-d} n^i \E \bigl\{ \gij_{n,t}(\Yp, \Yp \cup \Pn) \bigr\} \to \lambda^i \muijj_k (t,t,\lambda).
$$
We need to justify the application of the dominated convergence theorem, for which we apply Lemma \ref{l:conv.upper.lemma} $(ii)$,  stating that there exists a positive integer $N \in \bbn_+$ so that for all $i \geq k+2$, $j \geq 1$, and $t\geq 0$,
$$
R_n^{-d} n^i \E \bigl\{ \gij_{n,t}(\Yp, \Yp \cup \Pn) \bigr\} \leq C^* \bigl( \lambda(1+\delta) \bigr)^i \int_{(\bbr^d)^{i-1}} \hij_t(0,\by) d\by \ \ \text{for all } n \geq N,
$$
where $\delta$ is a positive constant satisfying $\lambda (1+\delta) e \omega_d < 1$. \\
Appealing to Lemma \ref{l:geo.lemma} $(i)$, together with Stirling's formula $i! \geq (i/e)^i$ for sufficiently large $i$, we have
\begin{align*}
\sum_{i=k+2}^\infty \sum_{j\geq1}\, j\, \frac{\bigl( \lambda(1+\delta) \bigr)^i}{i!}\,  \int_{(\bbr^d)^{i-1}} \hij_t(0,\by) d\by  &\leq  \sum_{i=k+2}^\infty \frac{\bigl( \lambda(1+\delta) \bigr)^i}{i!}\, \begin{pmatrix} i \\ k+2 \end{pmatrix} i^{i-2} (\omega_d)^{i-1} \\
&\quad \leq C^* \sum_{i=k+2}^\infty i^k \bigl( \lambda (1+\delta) e \omega_d \bigr)^i < \infty.
\end{align*}
Thus, we can apply the dominated convergence theorem.

Next, we address the computation of the covariance. By the monotone convergence theorem,
\begin{align*}
R_n^{-d}\, &\E \bigl\{ \beta_{k,n}(t) \beta_{k,n}(s)\bigr\} \\
&= R_n^{-d} \sum_{i=k+2}^\infty \sum_{j, \jp \geq1} \, j \jp\, \E \Bigl\{ \sum_{\Y \subset \Pn} \gij_{n,t} (\Y, \Pn)\, \gijp_{n,s} (\Y, \Pn) \Bigr\}  \\
&\quad +R_n^{-d} \sum_{i, \ip=k+2}^\infty \sum_{j, \jp \geq1}\, j \jp\, \E \Bigl\{ \sum_{\Y \subset \Pn} \sum_{\Yp \subset \Pn, \, \Yp \neq \Y} \hspace{-5pt}\gij_{n,t} (\Y, \Pn)\, \gipjp_{n,s} (\Yp, \Pn) \Bigr\} \\
&:= A_n + B_n.
\end{align*}
The argument similar to that for deriving the limit of $R_n^{-d} \E \bigl\{ \beta_{k,n}(t) \bigr\}$ yields
$$
A_n \to \sum_{i=k+2}^\infty \sum_{j, \jp \geq1}\, j \jp\, \frac{\lambda^i}{i!}\, \muijjp_k (t,s,\lambda), \ \ \text{as } n \to \infty.
$$

As for $B_n$, note first that if $\Y$ and $\Yp$ share at least one point,
$$
\gij_{n,t}(\Y, \Pn)\, \gipjp_{n,s}(\Yp, \Pn) = 0;
$$
Therefore, it must be that $|\Y \cap \Yp| = 0$ (i.e., no common points exist between $\Y$ and $\Yp$) whenever $\Y \neq \Yp$. It then follows from the Palm theory that
\begin{equation}  \label{e:Bn}
B_n =\sum_{i, \ip = k+2}^{\infty} \sum_{j, \jp \geq 1} \, j \jp\,  R_n^{-d}  \frac{n^{i+\ip}}{i!\, \ip!}\, \E \bigl\{ \gij_{n,t}(\Y_1, \Y_{12} \cup \Pn)\, \gipjp_{n,s} (\Y_2, \Y_{12} \cup \Pn) \bigr\},
\end{equation}
where $\Y_1$ and $\Y_2$ are sets of iid points in $\bbr^d$ with density $f$, such that $|\Y_1 \cap \Y_2| = 0$, and $\Y_{12} := \Y_1 \cup \Y_2$ is independent of $\Pn$. Let $\Pnp$ be an independent copy of $\Pn$, which itself is independent of  $\Y_{12}$. Then, one more application of the Palm theory yields
\begin{align*}
R_n^{-d} &\E \bigl\{ \beta_{k,n}(t) \bigr\}\,\E \bigl\{ \beta_{k,n}(s) \bigr\} \\
&= \sum_{i, \ip = k+2}^{\infty} \sum_{j, \jp \geq 1} \, j \jp\, R_n^{-d} \frac{n^{i+\ip}}{i!\, \ip!}\, \E \bigl\{ \gij_{n,t}(\Y_1, \Y_1 \cup \Pn)\, \gipjp_{n,s} (\Y_2, \Y_2 \cup \Pnp) \bigr\}.
\end{align*}
Combining this with \eqref{e:Bn},
\begin{align*}
B_n &- R_n^{-d} \E \bigl\{ \beta_{k,n}(t) \bigr\}\,\E \bigl\{ \beta_{k,n}(s) \bigr\}  \\
&= \sum_{i, \ip = k+2}^{\infty} \sum_{j, \jp \geq 1} \, j \jp\, R_n^{-d} \frac{n^{i+\ip}}{i!\, \ip!}\, \E \bigl\{ \gij_{n,t}(\Y_1, \Y_{12} \cup \Pn)\, \gipjp_{n,s} (\Y_2, \Y_{12} \cup \Pn) \\
&\qquad \qquad \qquad - \gij_{n,t}(\Y_1, \Y_1 \cup \Pn)\, \gipjp_{n,s} (\Y_2, \Y_2 \cup \Pnp) \bigr\}.
\end{align*}
By virtue of Lemma \ref{l:conv.upper.lemma} $(iii)$, while supposing temporarily that the dominated convergence theorem is applicable, the expression on the right hand side converges to
$$
\sum_{i,  \ip = k+2}^{\infty} \sum_{j, \jp \geq 1} \, j \jp\,  \frac{\lambda^{i+\ip}}{i!\, \ip !}\, \xiijipjp_k(t,s,\lambda),
$$
and thus, $R_n^{-d} \text{Cov} \bigl\{ \beta_{k,n}(t), \beta_{k,n}(s) \bigr\} \to C_k(t,s; \infty)$, $n\to\infty$ follows, as required.

To establish a summable upper bound, we use Lemma \ref{l:conv.upper.lemma} $(iv)$ and Lemma \ref{l:geo.lemma} $(ii)$. We have that
\begin{align*}
\sum_{i,  \ip = k+2}^{\infty} &\sum_{j, \jp \geq 1} \, j \jp\, R_n^{-d} \frac{n^{i+\ip}}{i!\, \ip!}\, \Bigl| \E \bigl\{ \gij_{n,t}(\Y_1, \Y_{12} \cup \Pn)\, \gipjp_{n,s} (\Y_2, \Y_{12} \cup \Pn) \\
&\qquad \qquad \qquad - \gij_{n,t}(\Y_1, \Y_1 \cup \Pn)\, \gipjp_{n,s} (\Y_2, \Y_2 \cup \Pnp) \bigr\} \Bigr|  \\
&\leq C^* \sum_{i,  \ip = k+2}^{\infty} \frac{\bigl( \lambda(1+\delta) \bigr)^{i+\ip}}{i!\, \ip!}\, \begin{pmatrix} i \\ k+2 \end{pmatrix} \begin{pmatrix} \ip \\ k+2 \end{pmatrix} i^{i-1} (\ip)^{\ip -1} (\omega_d)^{i+\ip-1} \\
&\leq C^* \left( \sum_{i=k+2}^\infty i^{k+1} \bigl( \lambda(1+\delta) e \omega_d \bigr)^i \right)^2 < \infty.
\end{align*}
At the last inequality, we used Stirling's formula, i.e., $i! \geq (i/e)^i$ for sufficiently large $i$.
\end{proof}
\begin{lemma} \label{l:conv.upper.lemma}
Throughout the statements $(i)$ and $(ii)$ below, $\Yp$ denotes a set of iid points in $\bbr^d$ with density $f$, independent of $\Pn$.

\noindent $(i)$ For $i \geq k+2$, $j, \jp \geq 1$, and $t,s \geq 0$,
$$
R_n^{-d} n^i \E \bigl\{ \gij_{n,t}(\Yp, \Yp \cup \Pn)\, \gijp_{n,s}(\Yp, \Yp \cup \Pn) \bigr\} \to \lambda^i \muijjp_k(t,s,\lambda), \ \ \ n \to\infty.
$$
\noindent $(ii)$ There exists a positive integer $N \in \bbn_+$ such that for all $i \geq k+2$, $j, \jp \geq 1$, and $t,s\geq 0$,
\begin{align*}
R_n^{-d} &n^i \E \bigl\{ \gij_{n,t}(\Yp, \Yp \cup \Pn)\, \gijp_{n,s}(\Yp, \Yp \cup \Pn) \bigr\} \\
&\leq C^* \bigl( \lambda(1+\delta) \bigr)^i \int_{(\bbr^d)^{i-1}} \hij_t(0,\by)\, \hijp_s(0,\by) d\by, \ \ \text{for all } n \geq N,
\end{align*}
where $\delta>0$ satisfies $\lambda (1+\delta) e\omega_d < 1$.
\vspace{5pt}

Moreover, throughout $(iii)$ and $(iv)$ below, $\Y_1$ and $\Y_2$ denote sets of iid points in $\bbr^d$ with density $f$ such that $|\Y_1 \cap \Y_2| = 0$ and $\Y_{12} := \Y_1 \cup \Y_2$ is independent of $\Pn$. Let $\Pnp$ be an independent copy of $\Pn$, which is independent of $\Y_{12}$.

\noindent $(iii)$ For $i, \ip \geq k+2$, $j,\jp \geq 1$, and $t,s\geq 0$,
\begin{align*}
R_n^{-d} n^{i+\ip} \E &\bigl\{  \gij_{n,t}(\Y_1, \Y_{12} \cup \Pn)\, \gipjp_{n,s}(\Y_2, \Y_{12} \cup \Pn) \\
&- \gij_{n,t}(\Y_1, \Y_1 \cup \Pn)\, \gipjp_{n,s} (\Y_2, \Y_2 \cup \Pnp)\bigr\} \to \lambda^{i+\ip} \xiijipjp_k(t,s,\lambda), \ \ \ n\to\infty.
\end{align*}
\noindent $(iv)$ There exists a positive integer $N \in \bbn_+$ such that for all $i, \ip \geq k+2$, $j, \jp \geq 1$, and $t,s \geq 0$,
\begin{align*}
\biggl| &R_n^{-d} n^{i+\ip} \E \bigl\{  \gij_{n,t}(\Y_1, \Y_{12} \cup \Pn)\, \gipjp_{n,s}(\Y_2, \Y_{12} \cup \Pn) \\
&\qquad \qquad - \gij_{n,t}(\Y_1, \Y_1 \cup \Pn)\, \gipjp_{n,s} (\Y_2, \Y_2 \cup \Pnp) \bigr\}  \biggr| \\
&\quad \leq C^* \bigl( \lambda (1+\delta) \bigr)^{i+\ip}\int_{(\bbr^d)^{i+\ip-1}} \hspace{-10pt} \hijipjp_{t,s} (0,\by)\,\one_{\Diip(t\vee s)}(0,\by)d\by, \ \ \text{for all } n \geq N,
\end{align*}
where $\delta$ is the same positive constant as in $(ii)$.
\end{lemma}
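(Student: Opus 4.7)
The four statements share a single engine that I would set up once. For each expectation, I first remove the isolation indicator inside $\gij$ using the Poisson void formula: conditionally on a finite point set $\Y$, the event that $\check C(\Y;t)$ is an isolated component of $\check C(\Y\cup\Pn;t)$ is exactly $\{\Pn\cap\B(\Y;t)=\emptyset\}$, of probability $\exp\bigl(-n\int_{\B(\Y;t)}f(y)\,dy\bigr)$. After extracting this factor, each expectation becomes a deterministic integral of products of $h$-indicators against iid $f$-densities, decorated by an exponential repulsion factor. I then change variables by placing the first point in polar coordinates, $x_1=R_n\rho\theta$, and writing $x_\ell=x_1+y_{\ell-1}$ for the rest. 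The Jacobian $R_n^d\rho^{d-1}J(\theta)$ cancels the $R_n^{-d}$ prefactor, the $\theta$-integral produces $s_{d-1}$, each density factor satisfies $nf(R_n\rho\theta+y)\to\lambda\rho^{-\alpha}$ by regular variation combined with \eqref{e:Rn.regime3}, and the exponent $n\int_{\B(\bx;t\vee s)}f$ tends to $\lambda\rho^{-\alpha}\mathrm{vol}(\B(0,\by;t\vee s))$ by the same reasoning applied on a bounded set.

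Part $(i)$ is then immediate: pointwise convergence of the integrand combined with dominated convergence, justified by Potter's inequalities \eqref{e:P-Potter1}--\eqref{e:P-Potter2}, gives the stated $\lambda^i\muijjp_k(t,s,\lambda)$. For $(ii)$ I keep the same change of variables but replace limits with bounds: for $\rho\ge 1$, large $n$, and $0<\xi<\alpha-d$ chosen so that $\lambda(1+\delta)e\omega_d<1$, Potter yields $\prod_{\ell=1}^{i}nf(R_n\rho\theta+y_{\ell-1})\le(\lambda(1+\delta))^{i}\rho^{-(\alpha-\xi)i}$, while the void exponential is at most $1$. The $\rho$-integral $\int_1^\infty\rho^{d-1-(\alpha-\xi)i}\,d\rho$ is finite for every $i\ge k+2$ (and in fact decreases in $i$), so it is absorbed into $C^*$, producing the announced bound after dropping $\hijp_s\le 1$.

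For $(iii)$ and $(iv)$ I repeat this for two disjoint clusters $\Y_1,\Y_2$. Writing $\B_1=\B(\Y_1;t)$ and $\B_2=\B(\Y_2;s)$, the joint isolation condition inside $\gij_{n,t}(\Y_1,\Y_{12}\cup\Pn)\gipjp_{n,s}(\Y_2,\Y_{12}\cup\Pn)$ combines the Poisson void $\{\Pn\cap(\B_1\cup\B_2)=\emptyset\}$ with the requirement that no point of $\Y_2$ lies in $\B_1$ nor of $\Y_1$ in $\B_2$, i.e.\ $1-\one_{\Diip((t\vee s)/2)}$. The factored product $\E\{\gij_{n,t}(\Y_1,\Y_1\cup\Pn)\}\,\E\{\gipjp_{n,s}(\Y_2,\Y_2\cup\Pnp)\}$ instead carries the void $\exp(-n\int_{\B_1}f-n\int_{\B_2}f)$ with no cross term. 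Subtracting and observing that on $\Diip(t,s)^c$ the sets $\B_1,\B_2$ are disjoint, so the two void exponentials agree and $\one_{\Diip((t\vee s)/2)}$ vanishes, leaves exactly the bracketed integrand of \eqref{e:def.xiijipjp}; the same polar change of variables plus regular variation then delivers $(iii)$. For $(iv)$ the bracket is bounded in absolute value by $\one_{\Diip(t,s)}\le\one_{\Diip(t\vee s)}$ (using $t+s\le 2(t\vee s)$), and Potter on $\prod_\ell nf$ gives the stated uniform estimate. The main obstacle is the algebraic bookkeeping in this last step: the term $-\one_{\Diip(t,s)}\exp(\cdots)$ has no direct probabilistic meaning on its own, and the indicator difference $\one_{\Diip(t,s)}-\one_{\Diip((t\vee s)/2)}$ only surfaces after tracking the $\B_1\cup\B_2$-versus-$\B_1+\B_2$ dichotomy in the Poisson void simultaneously with the $\Y_1$-$\Y_2$ cross-isolation; once this is sorted, what remains is a mechanical application of Potter and dominated convergence, with the only further check being that the $i$-dependence in $(ii)$ and $(iv)$ is benign so the sums in Lemma \ref{l:giant.cov} converge.
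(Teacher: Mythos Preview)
Your proposal is correct and follows essentially the same route as the paper: Poisson void formula for the isolation indicator, translation plus polar change of variables $x_1=R_n\rho\theta$, regular variation for pointwise convergence, and Potter's bound for domination; for $(iii)$ the paper inserts the intermediate term $\gij_{n,t}(\Y_1,\Y_1\cup\Pn)\gipjp_{n,s}(\Y_2,\Y_2\cup\Pn)$ and invokes spatial independence of $\Pn$ on disjoint sets, which is exactly your observation that the two void exponentials agree on $\Diip(t,s)^c$. One small correction in $(ii)$: the uniform-in-$i$ Potter estimate you write, $\prod_{\ell} nf(R_n\rho\theta+y_{\ell-1})\le(\lambda(1+\delta))^{i}\rho^{-(\alpha-\xi)i}$, is not available, because for the shifted factors the indicator only guarantees $\|\rho\theta+y_\ell/R_n\|\ge1$, so Potter gives $f(R_n\|\rho\theta+y_\ell/R_n\|e_1)/f(R_ne_1)\le(1+\xi)$ without an extra $\rho^{-(\alpha-\xi)}$; only the unshifted factor contributes $\rho^{-(\alpha-\xi)}$, yielding the $i$-independent $\rho$-integral $\int_1^\infty\rho^{d-1-\alpha+\xi}d\rho$, which is precisely what makes $C^*$ uniform in $i$ (and you should not drop $\hijp_s$, since it appears in the stated bound).
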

\begin{proof}[Proof of $(i)$]
Conditioning on $\Yp$, we have that
\begin{align*}
&R_n^{-d} n^i \E \bigl\{ \gij_{n,t}(\Yp, \Yp \cup \Pn)\, \gijp_{n,s}(\Yp, \Yp \cup \Pn) \bigr\} \\
&\quad = R_n^{-d} n^i \E \biggl\{ \hij_{n,t}(\Yp)\, \hijp_{n,s}(\Yp) \P \Bigl\{ \Pn \bigl( \B (\Yp; s \vee t) \bigr) = \emptyset \, \Bigl|\,  \Yp \Bigr\} \biggr\}    \\
&\quad = R_n^{-d} n^i \int_{(\bbr^d)^i} f(\bx)\, \one \bigl\{ m(\bx) \geq R_n \bigr\} \hij_t(\bx)\, \hijp_s(\bx) \exp \bigl\{-n\int_{\B(\bx; s \vee t)} \hspace{-5pt} f(z)dz\bigr\}   d\bx.
\end{align*}
Let $J_n$ denote the last integral. Changing the variables in the same way as in \eqref{e:1st.change} and \eqref{e:polar1} yields
\begin{align}
J_n &= \bigl(nf(R_ne_1)  \bigr)^i \int_{S_{d-1}} J(\theta) d\theta \int_1^{\infty} d\rho \int_{(\bbr^d)^{i-1}} \hspace{-5pt} d\by \, \rho^{d-1} \frac{f(R_n\rho e_1)}{f(R_ne_1)}  \label{e:polar2} \\
&\qquad \qquad \qquad \times \prod_{\ell = 1}^{i-1}\, \frac{f \bigl( R_n \| \rho \theta + y_\ell /R_n \|e_1 \bigr)}{f(R_ne_1)}\, \one \bigl\{ \| \rho \theta + y_\ell /R_n \| \geq 1 \bigr\} \notag \\
&\qquad \qquad \qquad \times \hij_t(0,\by)\, \hijp_s(0,\by)\, \exp \Bigl\{ -n \int_{\B(R_n\rho \theta,R_n\rho \theta + \by; s\vee t )} \hspace{-15pt} f(z)dz \Bigr\}, \notag
\end{align}
where $S_{d-1}$ is the $(d-1)$-dimensional unit sphere  in $\bbr^d$ and $J(\theta)$ is the Jacobian. \\
By the regular variation assumption \eqref{e:RV.tail} of $f$, we have that for every $\rho \geq 1$, $\theta \in S_{d-1}$, and $y_1,\dots, y_{i-1} \in \bbr^d$,
$$
\frac{f(R_n\rho e_1)}{f(R_ne_1)}\, \prod_{\ell = 1}^{i-1}\, \frac{f \bigl( R_n \| \rho \theta + y_\ell /R_n \|e_1 \bigr)}{f(R_ne_1)} \to \rho^{-\alpha i}, \ \ \ n\to\infty.
$$
Appealing to Potter's bound as in \eqref{e:P-Potter1} and \eqref{e:P-Potter2}, for every $\rho \geq 1$, $\theta \in S_{d-1}$, and $y_1,\dots,y_{i-1} \in \bbr^d$,
\begin{align*}
n &\int_{\B(R_n\rho \theta,R_n\rho \theta + \by; s\vee t )} \hspace{-10pt} f(z)dz\\
&= nf(R_ne_1) \int_{\B(0,\by; s\vee t)} f \bigl( R_n \| \rho \theta + z /R_n \|e_1 \bigr)/f(R_ne_1) dz \\
&\to \lambda \rho^{-\alpha} (s\vee t)^d \text{vol}\bigl( \B(0,\by; 1) \bigr), \ \ \ n \to\infty.
\end{align*}

For an application of the dominated convergence theorem, we employ Potter's bound once again. First, we choose $\delta$, as in the statement of the lemma, so that $\lambda (1+\delta) e\omega_d < 1$, and then, fix $\xi \in \bigl( 0,\min \{ \alpha-d,  \delta \} \bigr)$. Then, there exists a positive integer $N_1 \in \bbn_+$, which is independent of $i$, such that
\begin{equation}  \label{e:potter1}
\frac{f(R_n\rho e_1)}{f(R_ne_1)}\, \one \{  \rho \geq 1\} \leq (1+\xi) \rho^{-\alpha +\xi}  \one \{  \rho \geq 1\}
\end{equation}
and 
\begin{equation}  \label{e:potter2}
\prod_{\ell = 1}^{i-1}\, \frac{f \bigl( R_n \| \rho \theta + y_\ell /R_n \|e_1 \bigr)}{f(R_ne_1)}\, \one \bigl\{ \| \rho \theta + y_\ell /R_n \| \geq 1 \bigr\} \leq (1+\xi)^{i-1}
\end{equation}
for all $n \geq N_1$. The integrand in \eqref{e:polar2} is now bounded above by $C^* (1+\xi)^i \rho^{d-1-\alpha+\xi}\, \hij_t(0,\by)\, \hijp_s(0,\by)$, and,
$$
\int_1^{\infty} \int_{(\bbr^d)^{i-1}} \rho^{d-1-\alpha +\xi}\, \hij_t(0,\by)\, \hijp_s(0,\by) d\by d\rho <\infty.
$$
Therefore, the dominated convergence theorem concludes that $J_n \to \lambda^i \muijjp(t,s,\lambda)$, $n\to\infty$, as required. \\
\textit{Proof of $(ii)$}:
Note first that there exists a positive integer $N_2 \in \bbn_+$ so that
\begin{equation}  \label{e:upper.lambda}
(1+\xi) nf(R_ne_1) \leq \lambda (1+\delta) \ \ \text{for all } n\geq N_2.
\end{equation}
Because of \eqref{e:potter1} and \eqref{e:potter2}, we have, for all $n \geq N := N_1 \vee N_2$,
\begin{align*}
J_n &\leq \bigl(  (1+\xi) nf(R_ne_1) \bigr)^i s_{d-1} \int_1^\infty \rho^{d-1-\alpha+\xi} d\rho \int_{(\bbr^d)^{i-1}} \hij_t(0,\by)\, \hijp_s(0,\by) d\by \\
&\leq C^* \bigl( \lambda (1+\delta) \bigr)^i \int_{(\bbr^d)^{i-1}} \hij_t(0,\by)\, \hijp_s(0,\by) d\by.
\end{align*}
\textit{Proof of $(iii)$}:
First, we write
\begin{align*}
&\E \bigl\{  \gij_{n,t}(\Y_1, \Y_{12} \cup \Pn)\, \gipjp_{n,s}(\Y_2, \Y_{12} \cup \Pn) - \gij_{n,t}(\Y_1, \Y_1 \cup \Pn)\, \gipjp_{n,s} (\Y_2, \Y_2 \cup \Pnp)\bigr\} \\
&\ = \E \bigl\{  \gij_{n,t}(\Y_1, \Y_{12} \cup \Pn)\, \gipjp_{n,s}(\Y_2, \Y_{12} \cup \Pn) - \gij_{n,t}(\Y_1, \Y_1 \cup \Pn)\, \gipjp_{n,s} (\Y_2, \Y_2 \cup \Pn)\bigr\} \\
&\ + \E \Bigl\{ \gij_{n,t}(\Y_1, \Y_1 \cup \Pn) \Bigl( \gipjp_{n,s}(\Y_2, \Y_2 \cup \Pn) - \gipjp_{n,s}(\Y_2, \Y_2 \cup \Pnp) \Bigr) \Bigr\} \\
&\ := \E \{ A_n \} + \E \{ B_n \}.
\end{align*}
Observing that
\begin{align*}
\gij_{n,t}(\Y_1, \Y_{12} \cup \Pn) &= \gij_{n,t}(\Y_1, \Y_1 \cup \Pn)\, \one \bigl\{  \, \B (\Y_1; t/2) \cap \B(\Y_2; t/2) = \emptyset \, \bigr\}, \\
\gipjp_{n,s}(\Y_2, \Y_{12} \cup \Pn) &= \gipjp_{n,s}(\Y_2, \Y_2 \cup \Pn)\, \one \bigl\{  \, \B (\Y_1; s/2) \cap \B(\Y_2; s/2) = \emptyset \, \bigr\},
\end{align*}
one can rewrite $\E \{  A_n\}$ as
$$
\E \{ A_n \} = - \E \bigl\{ \gij_{n,t}(\Y_1, \Y_1 \cup \Pn)\, \gipjp_{n,s}(\Y_2, \Y_2 \cup \Pn)\, \one_{\Diip ((t\vee s)/2)} (\Y_1, \Y_2) \bigr\}.
$$
Next, we split $\E \{B_n\}$ into two parts.
\begin{equation}  \label{e:EBn}
\E \{B_n\} = \E \Bigl\{\Delta_n \one \bigl\{ \, \B (\Y_1; t) \cap \B (\Y_2; s) =\emptyset \, \bigr\} \Bigr\} + \E \bigl\{ \Delta_n \one_{\Diip (t,s)} (\Y_1, \Y_2)\bigr\},
\end{equation}
where
$$
\Delta_n = \gij_{n,t}(\Y_1, \Y_1 \cup \Pn) \Bigl( \gipjp_{n,s}(\Y_2, \Y_2 \cup \Pn) - \gipjp_{n,s}(\Y_2, \Y_2 \cup \Pnp) \Bigr).
$$
By the spacial independence of the Poisson point process, the first term on the right hand side of \eqref{e:EBn} equals zero.
Rearranging the terms in $\E\{A_n\}$ and $\E\{B_n\}$, we obtain
\begin{align*}
&\E \bigl\{  \gij_{n,t}(\Y_1, \Y_{12} \cup \Pn)\, \gipjp_{n,s}(\Y_2, \Y_{12} \cup \Pn) - \gij_{n,t}(\Y_1, \Y_1 \cup \Pn)\, \gipjp_{n,s} (\Y_2, \Y_2 \cup \Pnp) \bigr\} \\
&\ =\E\{C_n\} -\E\{D_n\},
\end{align*}
where
\begin{align*}
C_n &= \gij_{n,t}(\Y_1, \Y_1 \cup \Pn)\, \gipjp_{n,s}(\Y_2, \Y_2 \cup \Pn) \\
&\qquad \qquad \times \Bigl( \one_{\Diip(t,s)} (\Y_1,\Y_2) - \one_{\Diip((t\vee s)/2)} (\Y_1,\Y_2) \Bigr), \\
D_n &= \gij_{n,t}(\Y_1, \Y_1 \cup \Pn)\, \gipjp_{n,s}(\Y_2, \Y_2 \cup \Pnp)\, \one_{\Diip(t,s)} (\Y_1,\Y_2).
\end{align*}
Conditioning on $\Y_{12}$, we have
\begin{align*}
\E \{ C_n\} &= \int_{(\bbr^d)^i}\int_{(\bbr^d)^{\ip}} f(\bx_1)\,f(\bx_2)\, \one \bigl\{ m(\bx_1,\bx_2) \geq R_n \bigr\} \hij_t(\bx_1)\,\hipjp_s(\bx_2) \\
&\qquad \qquad \quad \times \Bigl( \one_{\Diip(t,s)}(\bx_1,\bx_2) - \one_{\Diip((t\vee s)/2)}(\bx_1,\bx_2) \Bigr) \\
&\qquad \qquad \quad \times  \exp \Bigl\{ -n\int_{\B(\bx_1; t) \cup \B(\bx_2; s)} \hspace{-10pt}f(z)dz \Bigr\}\, d\bx_2 d\bx_1.
\end{align*}
Proceeding as in the proof of $(i)$, while suitably applying Potter's bound, we can obtain, as $n\to\infty$,
\begin{align*}
R_n^{-d} n^{i + \ip} \E \{ C_n\}  &\to \lambda^{i+\ip}  s_{d-1} \int_1^\infty \rho^{d-1-\alpha(i+\ip)} \int_{(\bbr^d)^{i+\ip-1}} \hijipjp_{t,s}(0,\by) \\
&\qquad \qquad \qquad \times \Bigl( \one_{\Diip(t,s)}(0,\by) - \one_{\Diip((t\vee s)/2)}(0,\by) \Bigr) \\
&\qquad \qquad \qquad \times e^{-\lambda \rho^{-\alpha} \text{vol} \bigl( \B(0,y_1,\dots y_{i-1}; t) \cup \B (y_i, \dots, y_{i+\ip-1}; s) \bigr)} d\by d\rho.
\end{align*}
Similarly, we have
\begin{align*}
R_n^{-d} n^{i + \ip} \E \{ D_n\}  &\to  \lambda^{i+\ip} s_{d-1} \int_1^\infty \rho^{d-1-\alpha(i+\ip)} \int_{(\bbr^d)^{i+\ip-1}} \hspace{-10pt} \hijipjp_{t,s}(0,\by)\, \one_{\Diip(t,s)}(0,\by) \\
&\qquad \qquad \qquad \times e^{-\lambda \rho^{-\alpha} \bigl[ \text{vol}\bigl(\B(0,y_1,\dots y_{i-1}; t)\bigr) +  \text{vol}\bigl(\B(y_i,\dots y_{i+\ip-1}; s)\bigr) \bigr]  } d\by d\rho,
\end{align*}
and, therefore,
$$
R_n^{-d} n^{i + \ip} \bigl( \E \{ C_n\}-\E\{D_n \}\bigr) \to \lambda^{i+\ip} \xiijipjp_k(t,s,\lambda), \ \ \ n\to\infty.
$$
\textit{Proof of $(iv)$}: Note first that
\begin{align*}
\biggl| R_n^{-d} &n^{i+\ip} \E \bigl\{  \gij_{n,t}(\Y_1, \Y_{12} \cup \Pn)\, \gipjp_{n,s}(\Y_2, \Y_{12} \cup \Pn) \\
&\qquad \qquad - \gij_{n,t}(\Y_1, \Y_1 \cup \Pn)\, \gipjp_{n,s} (\Y_2, \Y_2 \cup \Pnp) \bigr\}  \biggr| \\
&= \Bigl| R_n^{-d} n^{i+\ip} \bigl( \E \{C_n\} - \E\{D_n \}\bigr) \Bigr|  \\
&\leq 2R_n^{-d} n^{i+\ip} \E \bigl\{ \hij_{n,t}(\Y_1)\, \hipjp_{n,s}(\Y_2)\, \one_{\Diip(t\vee s)}(\Y_1, \Y_2) \bigr\}.
\end{align*}
Changing the variables in the same manner as in $(i)$, the last expression above equals
\begin{align*}
2 \bigl( nf(R_ne_1) \bigr)^{i+\ip} &\int_{S_{d-1}} J(\theta)d\theta \int_1^{\infty} d\rho \int_{(\bbr^d)^{i+\ip-1}} \hspace{-10pt} d\by\, \rho^{d-1} \frac{f(R_n\rho e_1)}{f(R_ne_1)} \\
&\times \prod_{\ell = 1}^{i+\ip-1} \frac{f \bigl(R_n \| \rho \theta + y_\ell/R_n \|e_1 \bigr)}{f(R_ne_1)}\, \one \bigl\{ \| \rho \theta + y_\ell /R_n  \| \geq 1 \bigr\} \\
&\times  \hijipjp_{t,s}(0,\by)\, \one_{\Diip(t\vee s)}(0,\by)
\end{align*}
Using the upper bound \eqref{e:potter1} and
$$
\prod_{\ell = 1}^{i+\ip-1}\, \frac{f \bigl( R_n \| \rho \theta + y_\ell /R_n \|e_1 \bigr)}{f(R_ne_1)}\, \one \bigl\{ \| \rho \theta + y_\ell /R_n \| \geq 1 \bigr\} \leq (1+\xi)^{i+\ip-1},
$$
and applying \eqref{e:upper.lambda}, we can complete the proof.
\end{proof}
\begin{lemma} \label{l:geo.lemma}
Fix a positive constant $L>0$. \\
\noindent $(i)$ For $i \geq k+2$, $0 \leq t,s \leq L$,
\begin{align*}
\sum_{j, \jp \geq1} j \jp \int_{(\bbr^d)^{i-1}} \hij_t(0,\by)\, \hijp_s(0,\by) d\by \leq \begin{pmatrix} i \\ k+2 \end{pmatrix}^2 i^{i-2} (L^d \omega_d)^{i-1},
\end{align*}
where $\omega_d$ is a volume of the unit ball in $\bbr^d$.
\vspace{5pt}

\noindent $(ii)$ For $i, \ip \geq k+2$, $0 \leq t,s \leq L$,
\begin{align*}
&\sum_{j, \jp \geq1} j \jp \int_{(\bbr^d)^{i+\ip-1}} \hspace{-10pt} \hijipjp_{t,s} (0,\by)\,  \one_{\Diip(t\vee s)}(0,\by) d\by \\
&\quad \leq 2^{d} \begin{pmatrix} i \\ k+2 \end{pmatrix} \begin{pmatrix} \ip \\ k+2 \end{pmatrix} i^{i-1} (\ip)^{\ip-1} (L^d \omega_d)^{i+\ip-1}.
\end{align*}
\end{lemma}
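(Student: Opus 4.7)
The plan is to combine a topological bound on the $k$-th Betti number of a \v{C}ech complex on $i$ vertices with a Cayley-type enumeration of spanning trees to estimate the volume of connected configurations. The first ingredient is the bound
$$
\sum_{j \ge 1} j\, \hij_t(\Y) \;=\; \beta_k \bigl( \check{C}(\Y;t) \bigr)\, \one \bigl\{ \check{C}(\Y;t) \text{ is connected} \bigr\} \;\le\; \binom{i}{k+2}\, \one \bigl\{ \check{C}(\Y;t) \text{ is connected} \bigr\},
$$
which follows from a standard combinatorial estimate for the $k$-th Betti number of a simplicial complex on $i$ vertices. The second ingredient is a Cayley tree bound: with the first vertex fixed at $0$,
$$
\int_{(\bbr^d)^{i-1}} \one \bigl\{ \check{C}(0,y_1,\dots,y_{i-1};L) \text{ is connected} \bigr\} d\by \;\le\; i^{i-2} (L^d \omega_d)^{i-1},
$$
obtained by union-bounding over the $i^{i-2}$ labeled spanning trees on $\{0, y_1, \dots, y_{i-1}\}$ (Cayley's formula) and, for each tree, traversing from the root $0$ so that each child vertex contributes a ball of volume at most $L^d \omega_d$.

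For part $(i)$, since both $t, s \le L$, applying the topological bound to each of the sums $\sum_j j\, \hij_t(0,\by)$ and $\sum_{\jp} \jp\, \hijp_s(0,\by)$ yields
$$
\sum_{j, \jp \ge 1} j\jp\, \hij_t(0,\by)\, \hijp_s(0,\by) \;\le\; \binom{i}{k+2}^2\, \one \bigl\{ \check{C}(0,\by;L) \text{ is connected} \bigr\},
$$
after which the Cayley tree bound completes the estimate.

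For part $(ii)$, apply the topological bound to each cluster, so that the left-hand side is dominated by $\binom{i}{k+2} \binom{\ip}{k+2}$ times $\one\{\text{both clusters connected at } L\}\, \one_{\Diip(L)}(0,\by)$, using $\one_{\Diip(t \vee s)} \le \one_{\Diip(L)}$. Membership in $\Diip(L)$ is equivalent to the existence of some vertex $a$ of cluster $1$ and some vertex $b$ of cluster $2$ with $\|a - b\| \le 2L$; union-bound this existential over the $i \cdot \ip$ candidate pairs. For a fixed pair $(a,b)$, integrate in three stages: first, integrate the $i-1$ non-origin vertices of cluster $1$ along a spanning tree rooted at $0$, producing $i^{i-2}(L^d\omega_d)^{i-1}$; next, the position of $b$ is constrained to $B(a; 2L)$, contributing $2^d L^d \omega_d$; finally, integrate the remaining $\ip - 1$ vertices of cluster $2$ along a spanning tree rooted at $b$, producing $\ip^{\ip-2}(L^d\omega_d)^{\ip-1}$. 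Multiplying these by the $i\cdot\ip$ pair choices and the $\binom{i}{k+2}\binom{\ip}{k+2}$ topological factor recovers exactly the stated bound. The main technical point is coordinating the two spanning trees with the touching pair $(a,b)$ so that the $2L$-ball constraint on $b$'s position is spent exactly once and the two Cayley factors separate cleanly; the remaining topological and tree-counting steps are routine.
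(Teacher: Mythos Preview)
Your proof is correct and follows essentially the same approach as the paper: both use the Betti number bound $\beta_k\bigl(\check{C}(\Y;t)\bigr)\le\binom{i}{k+2}$ to replace the weighted sums $\sum_j j\,\hij_t$ by connectedness indicators, then Cayley's formula to enumerate spanning trees, and for part $(ii)$ a union bound over the $i\cdot\ip$ candidate touching pairs together with a $2L$-ball for the cross link. The only difference is cosmetic: the paper first rescales by $L$ to work with unit-radius complexes and pulls out the factor $L^{d(i+\ip-1)}$, whereas you keep $L$ inside the ball volumes throughout; the arithmetic and the underlying combinatorics are identical.
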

\begin{proof}[Proof of $(i)$]
Since every connected component built on a set of $i$ points can contribute to the $k$-th Betti number at most $\begin{pmatrix} i \\ k+2 \end{pmatrix}$ times, we have that
\begin{align*}
&\sum_{j, \jp \geq1} j \jp \int_{(\bbr^d)^{i-1}} \hij_t(0,\by)\, \hijp_s(0,\by) d\by \\
&\quad \leq \begin{pmatrix} i \\ k+2 \end{pmatrix}^2 \sum_{j,\jp\geq1} \int_{(\bbr^d)^{i-1}} \hij_t(0,\by)\, \hijp_s(0,\by) d\by  \\
&\quad \leq \begin{pmatrix} i \\ k+2 \end{pmatrix}^2 L^{d(i-1)} \int_{(\bbr^d)^{i-1}} \hspace{-5pt}\one \bigl\{ \, \check{C}(0,\by; 1) \text{ is connected} \, \bigr\} d\by.
\end{align*}
It is well known that there exist $i^{i-2}$ spanning trees on a set of $i$ points, and thus,
$$
 \int_{(\bbr^d)^{i-1}} \hspace{-5pt}\one \bigl\{ \, \check{C}(0,\by; 1) \text{ is connected} \, \bigr\} d\by \leq i^{i-2} (\omega_d)^{i-1}.
$$
Now, the claim is proved. \\
\textit{Proof of $(ii)$}:
\begin{align*}
&\sum_{j, \jp \geq1} j \jp \int_{(\bbr^d)^{i+\ip-1}} \hspace{-10pt} \hijipjp_{t,s} (0,\by)\,  \one_{\Diip(t\vee s)}(0,\by) d\by \\
&\quad \leq \begin{pmatrix} i \\ k+2 \end{pmatrix} \begin{pmatrix} \ip \\ k+2 \end{pmatrix} L^{d(i+\ip-1)}\int_{(\bbr^d)^{i+\ip-1}} \hspace{-5pt} \one \bigl\{ \, \check{C}(0,y_1,\dots,y_{i-1}; 1) \text{ is connected},  \\
&\qquad \qquad \qquad \quad \check{C}(y_i, \dots, y_{i+\ip-1}; 1) \text{ is connected}, \ \check{C}(0,\by; 2) \text{ is connected} \bigr\} d\by.
\end{align*}
If $\check{C}(0,y_1,\dots,y_{i-1}; 1)$ is connected, there exist $i^{i-2}$ spanning trees constructed from $\{ 0,y_1,\dots,y_{i-1} \}$. Similarly, there are $(\ip)^{\ip -2}$ spanning trees built on the points $\{ y_i, \dots, y_{i+\ip-1}\}$ whenever $\check{C}(y_i,\dots,y_{i+\ip-1}; 1)$ is connected. In addition, if $\check{C}(0,\by; 2)$ is connected, two sets of points $\{ 0,y_1,\dots,y_{i-1}\}$ and $\{ y_i,\dots,y_{i+\ip-1} \}$ must be at a distance of at most $2$, implying that $\| y_p - y_q \| \leq 2$ for some $p \in \{ 0,\dots,i-1 \}$ and $q\in \{ i,\dots,i+\ip-1 \}$ (take $y_0 \equiv 0$). Therefore,
\begin{align*}
&\int_{(\bbr^d)^{i+\ip-1}} \hspace{-5pt} \one \bigl\{ \, \check{C}(0,y_1,\dots,y_{i-1}; 1) \text{ is connected}, \ \check{C}(y_i, \dots, y_{i+\ip-1}; 1) \text{ is connected}, \\
&\quad \qquad \qquad \qquad \qquad  \check{C}(0,\by; 2) \text{ is connected} \bigr\} d\by  \\
&\quad \leq i^{i-2} (\ip)^{\ip-2} i \ip (\omega_d)^{i+\ip-2} 2^d \omega_d = 2^d i^{i-1} (\ip)^{\ip-1} (\omega_d)^{i+\ip-1}.
\end{align*}
\end{proof}

Subsequently, we establish the FCLT for the truncated Betti number \eqref{e:truncated.betti}, for which, as its limit, we need to define a ``truncated" limiting Gaussian process. For $M \geq k+2$, we define
$$
Z_k^{(M)}(t) := \sum_{i=k+2}^M \sum_{j\geq1} j Z_k^{(i,j)}(t),  \ \ \ t\geq0.
$$
It is worthwhile noting that there is no need to restrict the range of $\lambda$ as in \eqref{e:Rn.regime3}. Further, we do not need to restrict the domain of functions in the space $C$.
\begin{lemma}  \label{l:giant.truncated.clt}
Suppose that
$$
nf(R_ne_1) \to \lambda \in (0,\infty), \ \ \ n\to\infty.
$$
Then, for every $M\geq k+2$,
$$
R_n^{-d/2} \int_0^t \Bigl( \beta_{k,n}^{(M)}(s) - \E \bigl\{ \beta_{k,n}^{(M)}(s) \bigr\} \Bigr)ds \Rightarrow \int_0^t Z_k^{(M)}(s)ds \ \ \text{in } C[0,\infty).
$$
\end{lemma}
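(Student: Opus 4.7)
The plan is to combine finite-dimensional convergence of the centered, rescaled integrated Betti numbers with tightness in $C[0,\infty)$; the truncation at $M$ turns an infinite sum into a finite one, so all moment estimates reduce to finitely many bounded integrals controlled by Lemmas \ref{l:giant.cov}, \ref{l:conv.upper.lemma}, and \ref{l:geo.lemma}.

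First I would establish the finite-dimensional convergence. By the Cram\'er--Wold device, it suffices to show that, for any $a_1,\dots,a_m \in \bbr$ and $0 \leq t_1 < \dots < t_m$,
$$
R_n^{-d/2} \sum_{\ell=1}^m a_\ell \int_0^{t_\ell}\bigl( \beta_{k,n}^{(M)}(s) - \E\{ \beta_{k,n}^{(M)}(s)\}\bigr) ds
$$
converges to a centered normal with variance $\sum_{\ell,\ellp} a_\ell a_{\ellp}\int_0^{t_\ell}\!\int_0^{t_{\ellp}} C_k(u,v;\infty)\, du\, dv$. Using Fubini, the prelimit is a centered Poisson functional of the form $\sum_{i=k+2}^M \sum_{j\geq 1} j \sum_{\Y\subset\Pn} G^{(i,j)}_{n}(\Y,\Pn)$, with $G^{(i,j)}_n(\Y,\Pn) := \sum_{\ell}a_\ell \int_0^{t_\ell} \gij_{n,s}(\Y,\Pn)\,ds$ a bounded, locally determined statistic. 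I would apply a Poisson CLT for such sums (method of cumulants, or equivalently, the Wiener--It\^o chaos bounds of Peccati--Sol\'e--Taqqu--Utzet / Schulte--Th\"ale), combined with the covariance asymptotics already established in Lemma \ref{l:giant.cov}. The truncation at $M$ ensures the inner sums over $i,j$ are finite, so the requisite cumulant/variance bounds follow from termwise use of Lemma \ref{l:conv.upper.lemma}.

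Next, for tightness in $C[0,\infty)$, set $F_n(t) := R_n^{-d/2}\int_0^t\bigl(\beta_{k,n}^{(M)}(s)-\E\{\beta_{k,n}^{(M)}(s)\}\bigr)ds$. The sample paths of $F_n$ are automatically continuous, so only a moment bound on increments is needed. Cauchy--Schwarz yields
$$
\E\bigl\{\bigl(F_n(t) - F_n(s)\bigr)^2\bigr\} \leq (t-s) \int_s^t R_n^{-d}\, \text{Var}\bigl(\beta_{k,n}^{(M)}(u)\bigr)\, du,
$$
and Lemma \ref{l:giant.cov} (applied at truncation level $M$, using the uniform bounds from Lemmas \ref{l:conv.upper.lemma} and \ref{l:geo.lemma}) gives $R_n^{-d}\text{Var}(\beta_{k,n}^{(M)}(u)) \leq C^*$ uniformly in $u$ on any compact window and in $n$ large. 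Hence $\E\{(F_n(t)-F_n(s))^2\} \leq C^*(t-s)^2$, which by Kolmogorov's tightness criterion forces tightness of $\{F_n\}$ in $C[0,L]$ for every $L>0$, and therefore in $C[0,\infty)$.

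The main obstacle will be the fidi CLT for the Poisson functional $\sum_{\Y\subset\Pn}G^{(i,j)}_n(\Y,\Pn)$, because the isolation indicator inside $\gij_{n,s}$ depends on the \emph{entire} configuration $\Pn$, not just on $\Y$; the object is a stabilizing Poisson statistic rather than a pure $U$-statistic. The natural remedy is to separate the ``local'' role of $\Y$ from the ``void'' contribution of $\Pn \setminus \Y$ exactly as in the Palm-theoretic decomposition driving Lemma \ref{l:conv.upper.lemma}: write the centered sum as its first-order chaos (a compensated Poisson integral whose variance matches the claimed Gaussian variance) plus higher-order chaoses whose contribution vanishes after the $R_n^{-d/2}$ normalization. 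The factorial-moment bounds from Lemma \ref{l:geo.lemma}, combined with the Potter-bound argument already used in the proof of Lemma \ref{l:conv.upper.lemma}, are precisely what is needed to dominate these higher-chaos contributions and verify the fourth-cumulant (or Lindeberg-type) condition driving the Gaussian limit. The restriction to the truncated sum is essential at this step because it turns the uniform control in $i,\ip$ into a trivial finite sum.
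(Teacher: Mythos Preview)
Your tightness argument is essentially identical to the paper's: both reduce to bounding $\E\{(F_n(T)-F_n(S))^2\}$ by $C^*(T-S)^2$ via the covariance estimates in Lemmas \ref{l:conv.upper.lemma} and \ref{l:geo.lemma}, then invoke the Kolmogorov--Chentsov criterion.

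For the finite-dimensional CLT, however, the paper takes a different and more elementary route. Rather than appeal to Wiener--It\^o chaos expansions or Malliavin--Stein bounds, it first performs a \emph{spatial} truncation: it restricts Max$(\Y)$ to a bounded annulus Ann$(R_n,KR_n)$, defines $T_n^{(M)}(K)$ accordingly, and uses a standard approximation argument (Penrose, p.~64) to reduce the problem to a CLT for $T_n^{(M)}(K)$ at fixed $K$. With the configuration now confined to a region of volume $O(R_n^d)$, the paper partitions the annulus into unit cubes $(Q_\ell)$, writes $T_n^{(M)}(K)=\sum_{\ell\in V_n}\xi_{\ell,n}$, and builds a dependency graph on $V_n$ by declaring $\ell\sim\ellp$ when the cubes are within distance $2Mt_m$; since the isolation indicator in $\gij_{n,s}$ has interaction range at most $Mt_m$, this is a genuine dependency graph of bounded degree. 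Stein's method for normal approximation (Theorem~2.4 in Penrose) then reduces the CLT to the moment conditions $R_n^d\max_\ell\E|\xi_{\ell,n}-\E\xi_{\ell,n}|^p/(\text{Var}\,T_n^{(M)}(K))^{p/2}\to0$ for $p=3,4$, which follow because each $\xi_{\ell,n}$ is dominated by a polynomial in a Poisson variable of bounded mean.

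Your chaos/cumulant approach is a legitimate alternative and would sidestep the $K$-truncation, but your description of it is slightly off: the higher-order chaoses do \emph{not} vanish under the $R_n^{-d/2}$ scaling---they contribute to the limiting variance (the $\xiijipjp_k$ terms in $C_k^{(M)}$ come precisely from second-order interactions). What one actually needs is that a Malliavin--Stein distance (e.g.\ via the second-order Poincar\'e inequality of Last--Peccati--Schulte) tends to zero, which requires bounding $L^4$-norms of first and second add-one cost operators applied to the functional. That is feasible here but heavier than the dependency-graph argument, whose payoff is that the truncation at $M$ makes the interaction range finite and the locally-dependent structure completely explicit. The paper's route is self-contained from Penrose's book; yours imports more machinery but is in principle sound once the chaos heuristic is corrected.
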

\begin{proof}
Our proof is closely related to that in Theorem 3.9 in \cite{penrose:2003}.
To prove finite-dimensional weak convergence, we apply the Cram\'er-Wold device, for which we need to establish the central limit theorem for
$$
R_n^{-d/2} \sum_{p=1}^m a_p \int_0^{t_p} \Bigl( \beta_{k,n}^{(M)}(s) - \E \bigl\{ \beta_{k,n}^{(M)}(s) \bigr\} \Bigr) ds
$$
for every $a_1,\dots,a_m \in \bbr$, $0 \leq t_1 < \dots < t_m <\infty$, and $m\geq 1$. \\
We first decompose this term into two parts in the following manner. For $K\geq1$, we write
\begin{align*}
\sum_{p=1}^m a_p \int_0^{t_p} \beta_{k,n}^{(M)}(s) ds &= \sum_{p=1}^m a_p \int_0^{t_p} \beta_{k,n}^{(M)}(s; K) ds  \\
&+ \sum_{p=1}^m a_p \int_0^{t_p} \sum_{i=k+2}^M \sum_{j\geq1} j \sum_{\Y \subset \Pn} \gij_{n,s}(\Y, \Pn)\, \\
& \qquad \qquad \times \one \bigl\{ \text{Max}(\Y) \in \text{Ann}(KR_n,\infty) \bigr\}ds  \\
&:= T_n^{(M)}(K) + U_n^{(M)}(K).
\end{align*}
Define
$$
\gamma^{(M)}(K) = \sum_{p=1}^m\sum_{q=1}^m a_p a_q \int_0^{t_p}\int_0^{t_q} C_k^{(M)}(u,v; K)\, du dv,
$$
where $C_k^{(M)} (u,v; K)$ is a truncated version of $C_k (u,v; K)$ given by
\begin{align*}
C_k^{(M)} (u,v; K) &:= \sum_{i=k+2}^M \sum_{j, \jp \geq1}\, j \jp\, \frac{\lambda^i}{i!}\, \muijjp_k (u,v,\lambda; K)  \\
&\qquad +\sum_{i, \ip = k+2}^M \sum_{j, \jp \geq1} \, j \jp\, \frac{\lambda^{i + \ip}}{i! \, \ip!} \, \xiijipjp_k(u,v,\lambda; K).
\end{align*}
Moreover, $\gamma^{(M)} := \lim_{K\to\infty}\gamma^{(M)}(K)$. It then follows from Lemma \ref{l:giant.cov} that
$$
\gamma^{(M)}(K) = \lim_{n\to\infty} R_n^{-d}\text{Var} \{ T_n^{(M)}(K) \}.
$$
For the required finite-dimensional weak convergence, we need to show that for every $M \geq k+2$,
$$
R_n^{-d/2} \sum_{p=1}^m a_p \int_0^{t_p} \Bigl( \beta_{k,n}^{(M)}(s) - \E \bigl\{ \beta_{k,n}^{(M)}(s) \bigr\} \Bigr) ds \Rightarrow N(0,\gamma^{(M)}), \ \ \ n\to\infty.
$$
By the standard approximation argument given on p. $\hspace{-5pt}$ 64 in \cite{penrose:2003}, it suffices to show that for every $K\geq1$,
$$
R_n^{-d/2} \bigl( T_n^{(M)}(K) - \E \{ T_n^{(M)}(K) \} \bigr) \Rightarrow N\bigl(0,\gamma^{(M)}(K)\bigr),  \ \ \ n\to \infty;
$$
equivalently, as $n\to\infty$,
$$
\frac{T_n^{(M)}(K) - \E \{ T_n^{(M)}(K) \}}{\sqrt{\text{Var}\{ T_n^{(M)}(K) \}}} \Rightarrow N(0,1) \ \ \text{for every } K\geq1.
$$

Let $(Q_\ell, \, \ell \in \bbn)$ be unit cubes covering $\bbr^d$. Let
$$
V_n := \bigl\{ \ell \in \bbn: Q_\ell \cap \text{Ann}(R_n,KR_n) \neq \emptyset \, \bigr\}.
$$
Then, we see that $| V_n  | \leq C^* R_n^d$. \\
Subsequently, we partition $T_n^{(M)}(K)$ as follows.
\begin{align*}
T_n^{(M)}(K) &= \sum_{\ell \in V_n} \sum_{p=1}^m a_p \int_0^{t_p} \sum_{i=k+2}^M \sum_{j\geq1} j \sum_{\Y \subset \Pn} \gij_{n,s}(\Y, \Pn)\,\\
&\qquad \qquad \qquad \times  \one \bigl\{ \text{Max}(\Y) \in \text{Ann}(R_n,KR_n) \cap Q_\ell \bigr\}ds \\
&:= \sum_{\ell \in V_n} \xi_{\ell,n}.
\end{align*}
We define a relation $\sim$ on a vertex set $V_n$ by $\ell \sim \ellp$ if and only if the distance between $Q_\ell$ and $Q_\ellp$ is less than $2Mt_m$. In this case, $(V_n, \sim)$ constitutes a \textit{dependency graph}, that is, for any two vertex sets $I_1, I_2 \subset V_n$ with no edges connecting them, $(\xi_{\ell, n}, \, \ell \in I_1)$ and $(\xi_{\ellp, n}, \, \ellp \in I_2)$ are independent.
By virtue of Stein's method for normal approximation (see Theorem 2.4 in \cite{penrose:2003}), the proof will be complete, provided that for $p=3,4$,
$$
R_n^d\, \max_{\ell \in V_n}\, \frac{E \bigl| \xi_{\ell,n} - \E\{ \xi_{\ell,n} \} \bigr|^p}{\bigl( \text{Var}\{ T_n^{(M)}(K) \} \bigr)^{p/2}} \to 0, \ \ \ n\to \infty.
$$
For $\ell \in V_n$, we denote by $Z_{\ell,n}$ the number of points in $\Pn$ lying in
$$
\text{Tube}(Q_\ell; Mt_m) := \bigl\{ x\in\bbr^d: \inf_{y \in Q_\ell}\|x-y \| \leq Mt_m \bigr\}.
$$
Clearly, $Z_{\ell,n}$ possesses a Poisson law with mean $n\int_{\text{Tube}(Q_\ell; Mt_m)}f(z)dz$. Using Potter's bound, we see that $Z_{\ell,n}$ is stochastically dominated by another Poisson random variable with a constant mean $C^*$. \\
Observe that
$$
|\xi_{\ell,n} |\leq \sum_{p=1}^m |a_p| t_m \sum_{i=k+2}^M \begin{pmatrix} i \\ k+2 \end{pmatrix}^2 \begin{pmatrix} z_{\ell,n} \\ i \end{pmatrix},
$$
and, accordingly, we have
$$
\max_{\ell \in V_n} \E \bigl| \xi_{\ell,n} - \E \{ \xi_{\ell,n} \} \bigr|^p \leq C^* \ \ \text{for } p=3,4.
$$
Therefore, for $p=3,4$,
$$
R_n^d\, \max_{\ell \in V_n}\, \frac{E \bigl| \xi_{\ell,n} - \E\{ \xi_{\ell,n} \} \bigr|^p}{\bigl( \text{Var}\{ T_n^{(M)}(K) \} \bigr)^{p/2}} \leq C^* R_n^d (R_n^d)^{-p/2} \to 0, \ \ \ n\to\infty,
$$
which completes the proof of the finite-dimensional weak convergence.

Next, we turn to verifying the tightness of
$$
X_n(t) := R_n^{-d/2} \int_0^t \Bigl( \beta_{k,n}^{(M)}(s) - \E \bigl\{ \beta_{k,n}^{(M)}(s) \bigr\} \Bigr)ds, \ \ t \geq 0,
$$
in the space $C[0,\infty)$. According to Theorem 12.3 in \cite{billingsley:1968}, we only have to show that, for any $L>0$, there exists $B>0$ such that
$$
\E \Bigl\{\bigl( X_n(T) - X_n(S)\bigr)^2 \Bigr\} \leq B (T-S)^2
$$
for all $0\leq S \leq T\leq L$ and $n\geq1$.

We see that
\begin{align*}
\E &\Bigl\{\bigl( X_n(T) - X_n(S)\bigr)^2 \Bigr\}  \\
&=R_n^{-d} \int_S^T\int_S^T \text{Cov} \bigl\{ \beta_{k,n}^{(M)}(t), \beta_{k,n}^{(M)}(s) \bigr\} ds dt \\
&= \int_S^T\int_S^T \sum_{i = k+2}^M \sum_{j, \jp \geq 1} \, j \jp\,  R_n^{-d}  \frac{n^{i}}{i!}\, \E \bigl\{ \gij_{n,t}(\Yp, \Yp \cup \Pn)\, \gijp_{n,s} (\Yp, \Yp \cup \Pn) \bigr\} ds dt \\
&\quad + \int_S^T\int_S^T \sum_{i, \ip = k+2}^M \sum_{j, \jp \geq1} \, j \jp\,  R_n^{-d}  \frac{n^{i+\ip}}{i!\, \ip!}\, \E \bigl\{ \gij_{n,t}(\Y_1, \Y_{12} \cup \Pn)\, \gipjp_{n,s} (\Y_2, \Y_{12} \cup \Pn) \\
&\qquad \qquad \qquad \qquad \qquad \qquad \qquad  - \gij_{n,t}(\Y_1, \Y_1 \cup \Pn)\, \gipjp_{n,s} (\Y_2, \Y_2 \cup \Pnp) \bigr\}dsdt
\end{align*}
($\Yp$ and $\Pnp$ are defined in the statement of Lemma \ref{l:conv.upper.lemma}). \\
Combining Lemma \ref{l:conv.upper.lemma} $(ii)$, $(iv)$ and Lemma \ref{l:geo.lemma} $(i)$, $(ii)$, the integrands in the last expression can be bounded above by a positive and finite constant, which does not depend on $s$, $t$, and $n$. We now conclude that
$$
\E \Bigl\{\bigl( X_n(T) - X_n(S)\bigr)^2 \Bigr\} \leq C^* (T-S)^2,
$$
and, thus, the tightness follows.
\end{proof}
\begin{proof}[Proof of Theorem \ref{t:giant.fclt}]
By Lemma \ref{l:giant.truncated.clt} and Theorem 3.2 in \cite{billingsley:1999}, it suffices to verify that for every $\epsilon > 0$,
\begin{align}
\lim_{M\to\infty} \limsup_{n\to\infty}\, \P \biggl\{\, \sup_{0\leq t \leq 1}\, \Bigl| \, \int_0^t &\Bigl( \beta_{k,n}(s)-\beta_{k,n}^{(M)}(s) \label{e:theorem3.2-1}  \\
&- \E \bigl\{ \beta_{k,n}(s)-\beta_{k,n}^{(M)}(s) \bigr\} \Bigr)ds\, \Bigr| >\epsilon R_n^{d/2} \biggr\} = 0, \notag
\end{align}
and
\begin{equation} \label{e:theorem3.2-2}
\lim_{M\to\infty}\P \biggl\{\, \sup_{0\leq t \leq 1}\, \Bigl| \, \int_0^t \Bigl(Z_k(s) - Z_k^{(M)}(s)  \Bigr) ds\,  \Bigr| > \epsilon \biggr\} = 0.
\end{equation}
By Chebyshev's inequality, \eqref{e:theorem3.2-1} immediately follows, provided that
\begin{align*}
\lim_{M\to\infty} \limsup_{n\to\infty}\, R_n^{-d} \E \biggl\{\, \sup_{0\leq t \leq 1}\, \Bigl| \, \int_0^t &\Bigl( \beta_{k,n}(s)-\beta_{k,n}^{(M)}(s) \\
&- \E \bigl\{ \beta_{k,n}(s)-\beta_{k,n}^{(M)}(s) \bigr\} \Bigr)ds\, \Bigr|^2 \biggr\} = 0
\end{align*}
By Cauchy-Schwarz inequality, we only have to show that
$$
\lim_{M\to\infty} \limsup_{n\to\infty}\, \int_0^1 \Bigl( R_n^{-d} \text{Var} \bigl\{ \beta_{k,n}(t)- \beta_{k,n}^{(M)}(t) \bigr\} \Bigr)^{1/2} ds = 0.
$$
One can decompose the integrand as follows.
\begin{align*}
&R_n^{-d} \text{Var} \bigl\{ \beta_{k,n}(t)- \beta_{k,n}^{(M)}(t) \bigr\}  \\
&\quad = \sum_{i=M+1}^\infty \sum_{j\geq1} j^2 R_n^{-d}\, \frac{n^i}{i!}\, \E \bigl\{ \gij_{n,t}(\Yp, \Yp \cup \Pn) \bigr\}  \\
&\qquad + \sum_{i,\ip=M+1}^\infty \sum_{j,\jp \geq1} j\jp\, R_n^{-d}\, \frac{n^{i+\ip}}{i!\,\ip !}\, \E\bigl\{ \gij_{n,t}(\Y_1, \Y_{12} \cup \Pn)\, \gipjp_{n,t}(\Y_2,\Y_{12} \cup \Pn) \\
&\qquad \qquad \qquad \qquad \qquad \qquad \qquad \quad - \gij_{n,t}(\Y_1, \Y_{1} \cup \Pn)\, \gipjp_{n,t}(\Y_2,\Y_{2} \cup \Pnp) \bigr\}
\end{align*}
($\Yp$ and $\Pnp$ are defined in the statement of Lemma \ref{l:conv.upper.lemma}). \\
Combining Lemma \ref{l:conv.upper.lemma} $(ii)$, $(iv)$ and Lemma \ref{l:geo.lemma} $(i)$, $(ii)$ proves that this is bounded by
\begin{align*}
C^* &\sum_{i=M+1}^\infty \frac{\bigl( \lambda (1+\delta) \bigr)^i}{i!}\, \begin{pmatrix} i \\ k+2 \end{pmatrix}^2 i^{i-2} (\omega_d)^{i-1}  \\
&+ C^* \sum_{i, \ip =M+1}^\infty \frac{\bigl( \lambda (1+\delta) \bigr)^{i+\ip}}{i!\, \ip !}\, \begin{pmatrix} i \\ k+2 \end{pmatrix} \begin{pmatrix} \ip \\ k+2 \end{pmatrix} i^{i-1} (\ip)^{\ip-1} (\omega_d)^{i+\ip-1} \\
&\leq C^* \sum_{i=M+1}^\infty i^{2k+2} \bigl( \lambda (1+\delta) e\omega_d \bigr)^i  + C^* \left( \sum_{i=M+1}^\infty i^{k+1} \bigl( \lambda (1+\delta) e\omega_d \bigr)^i \right)^2.
\end{align*}
Since $0<\lambda (1+\delta) e\omega_d < 1$, the claim has been proved.
Since the proof of \eqref{e:theorem3.2-2} is almost the same as that of \eqref{e:theorem3.2-1}, we omit it.
\end{proof}

\subsection{Proof of Theorem \ref{t:sparse.fclt}}  \label{s:proof.second.regime}

The proof of Theorem \ref{t:sparse.fclt} somewhat parallels that of Theorem \ref{t:giant.fclt}, for which we need to recall the notations of several indicator functions and variants of the Betti numbers defined at the beginning of Section \ref{s:proof.third.regime}. As in Lemma \ref{l:giant.cov}, we begin with computing the asymptotic mean and covariance of the scaled $k$-th Betti numbers. In the following, let $\rho_n := n^{k+2}R_n^d f(R_ne_1)^{k+2}$.
\begin{lemma}  \label{l:sparse.cov}
For every $t, s \leq 0$ and $1\leq K\leq \infty$, we have, as $n\to \infty$,
$$
\rho_n^{-1}\, \E \bigl\{ \beta_{k,n}(t; K) \bigr\} \to \mukt_k (t,t,0; K) / (k+2)! \in (0,\infty),
$$
and
\begin{align*}
\rho_n^{-1}\, \text{Cov} \bigl\{ \beta_{k,n}(t; K), \beta_{k,n}(s; K) \bigr\} \to \mukt_k (t,s,0; K) / (k+2)! \in (0,\infty),
\end{align*}
where the definition of the limit is given in \eqref{e:def.muijjp1}.
\end{lemma}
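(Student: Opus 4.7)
The plan is to adapt the proof of Lemma \ref{l:giant.cov} to the sparser second regime, relying on two simplifications driven by the conditions $\rho_n := n^{k+2}R_n^d f(R_ne_1)^{k+2} \to \infty$ and $nf(R_ne_1) \to 0$. First, since $nf(R_ne_1)\to 0$, the exponential factors $\exp\bigl(-n\int f\bigr)$ arising in the Palm computations tend to $1$, which is precisely the $\lambda=0$ case of the $\mukt_k$ and $\xiijipjp_k$ quantities already studied. Second, the scaling $\rho_n^{-1}$ is strictly weaker than $R_n^{-d}$: the ratio $R_n^d/\rho_n = (nf(R_ne_1))^{-(k+2)}$ diverges. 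Consequently, in the Palm expansion, the $i$-th term carries a factor of order $(nf(R_ne_1))^{i-k-2}$, which vanishes for $i>k+2$ and equals $1$ in the critical case $i=k+2$. Thus only components on the smallest possible number of vertices $k+2$ survive in the limit; among those only $j=1$ is active, since a connected component on $k+2$ vertices carries at most one $k$-dimensional hole.

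For the mean, I would use the representation $\beta_{k,n}(t;K)=\sum_{i,j} j\sum_{\Y\subset\Pn} \gij_{n,t}(\Y,\Pn)\,\one\{\text{Max}(\Y)\in\text{Ann}(R_n,KR_n)\}$ and apply Palm theory (Lemma \ref{l:palm}). Each Palm expectation becomes $\tfrac{n^i}{i!}\E\{\gij_{n,t}(\Yp,\Yp\cup\Pn)\,\one\{\cdot\}\}$, which after the polar change of variables $r\leftrightarrow R_n\rho$ of Lemma \ref{l:conv.upper.lemma}(i) factors as $R_n^d f(R_ne_1)^i$ times a bounded integral. Multiplying by $\rho_n^{-1}$ produces the decisive factor $(nf(R_ne_1))^{i-k-2}$. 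Potter's bound then furnishes a dominating integrand that is summable in $i$, so dominated convergence applies. The surviving $(i,j)=(k+2,1)$ term yields the limit $\mukt_k(t,t,0;K)/(k+2)!$, and every other term is killed by the factor $(nf(R_ne_1))^{i-k-2}\to 0$.

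For the covariance, I would decompose $\E\{\beta_{k,n}(t;K)\beta_{k,n}(s;K)\} = A_n + B_n$ exactly as in the proof of Lemma \ref{l:giant.cov}, where $A_n$ collects diagonal contributions $\Y=\Yp$ and $B_n$ collects disjoint pairs $|\Y\cap\Yp|=0$. The analysis of $\rho_n^{-1}A_n$ parallels the mean computation and produces $\mukt_k(t,s,0;K)/(k+2)!$ in the limit at $(i,j,\jp)=(k+2,1,1)$. For the centered cross-term $B_n - \E\{\beta_{k,n}(t;K)\}\E\{\beta_{k,n}(s;K)\}$, Lemma \ref{l:conv.upper.lemma}(iv) (with $\lambda=0$ in place of $\lambda>0$) shows that its natural scale is $R_n^d$, so $\rho_n^{-1}$ times this quantity inherits the factor $(nf(R_ne_1))^{i+\ip-k-2}$. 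In the dominant case $i=\ip=k+2$ this equals $(nf(R_ne_1))^{k+2}\to 0$, so the entire $\xi$-contribution disappears in the second regime, leaving only the diagonal $\mu$-piece. Positivity and finiteness of the two limits follow from the non-negativity of $\hij_t(0,\by)\hijp_s(0,\by)$ on a set of positive Lebesgue measure together with integrability of $\rho^{d-1-\alpha(k+2)}$ on $[1,K]$ (guaranteed since $\alpha(k+2)>d$).

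The main obstacle is the uniform-in-$i$ domination needed to interchange limit and infinite sum over $i$; in Lemma \ref{l:giant.cov} this forced the genuine condition $\lambda(1+\delta)e\omega_d<1$. In the present regime, the corresponding geometric series has ratio $(1+\xi)\, nf(R_ne_1)$, which tends to $0$, so the summability step is substantially easier: a majorant of the form $\sum_i C^*((1+\xi)\, nf(R_ne_1))^{i-k-2}$ controls the tail uniformly for all large $n$. The remaining computations follow the blueprint of Section \ref{s:proof.third.regime}, combining Lemmas \ref{l:conv.upper.lemma} and \ref{l:geo.lemma} with the negligibility factor $(nf(R_ne_1))^{i-k-2}$ that kills every larger-size component.
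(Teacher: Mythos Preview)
Your proposal is correct and follows essentially the same route as the paper. The paper packages the adapted Palm-computation bounds into a separate Lemma~\ref{l:sparse.conv.upper.lemma} (the sparse-regime analogue of Lemma~\ref{l:conv.upper.lemma}), stating explicitly that $\rho_n^{-1}n^i\,\E\{\gij_{n,t}\gijp_{n,s}\}\leq C^*(2nf(R_ne_1))^{i-(k+2)}\int \hij_t\hijp_s$ and the analogous cross-term bound with exponent $i+\ip-(k+2)$; your plan to obtain these same bounds by rerunning the Potter-bound argument of Lemma~\ref{l:conv.upper.lemma} with the factor $(nf(R_ne_1))^{i-k-2}$ is exactly what that lemma encodes. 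One small caution: you cannot literally invoke Lemma~\ref{l:conv.upper.lemma}(iv) ``with $\lambda=0$'', since that lemma is stated under $nf(R_ne_1)\to\lambda>0$; you must rederive the bound in the present regime (as the paper does), but the computation is identical and your description of it is accurate.
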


Recall that, in the last subsection, Lemmas \ref{l:conv.upper.lemma} and \ref{l:geo.lemma} play a crucial role in proving Lemma \ref{l:giant.cov}. In the present subsection, however, one needs to replace Lemma \ref{l:conv.upper.lemma} with Lemma \ref{l:sparse.conv.upper.lemma} below in order to show Lemma \ref{l:sparse.cov}. Since the proof of Lemma \ref{l:sparse.conv.upper.lemma} is analogous to that of Lemma \ref{l:conv.upper.lemma}, we omit the proof.
\begin{lemma} \label{l:sparse.conv.upper.lemma}
Throughout the statements $(i)$ and $(ii)$ below, $\Yp$ denotes a set of iid points in $\bbr^d$ with density $f$, independent of $\Pn$.

\noindent $(i)$ For $t,s \geq 0$, we have, as $n\to\infty$,
$$
\rho_n^{-1} n^{k+2} \E \bigl\{ g^{(k+2,1)}_{n,t}(\Yp, \Yp \cup \Pn)\, g^{(k+2,1)}_{n,s}(\Yp, \Yp \cup \Pn) \bigr\} \to \mukt_k(t,s,0).
$$
\noindent $(ii)$ There exists a positive integer $N \in \bbn_+$ such that for all $i \geq k+2$, $j, \jp \geq 1$, and $t,s\geq 0$,
\begin{align*}
\rho_n^{-1} n^i &\E \bigl\{ \gij_{n,t}(\Yp, \Yp \cup \Pn)\, \gijp_{n,s}(\Yp, \Yp \cup \Pn) \bigr\} \\
&\leq C^* \bigl( 2nf(R_ne_1) \bigr)^{i-(k+2)} \int_{(\bbr^d)^{i-1}} \hij_t(0,\by)\, \hijp_s(0,\by) d\by
\end{align*}
for all $n \geq N$.
\vspace{5pt}

Moreover, $\Y_1$ and $\Y_2$ denote sets of iid points in $\bbr^d$ with density $f$ such that $|\Y_1 \cap \Y_2| = 0$ and $\Y_{12} := \Y_1 \cup \Y_2$ is independent of $\Pn$. Let $\Pnp$ be an independent copy of $\Pn$, which is independent of $\Y_{12}$.

\noindent $(iii)$ There exists a positive integer $N \in \bbn_+$ such that for all $i, \ip \geq k+2$, $j, \jp \geq 1$, and $t,s \geq 0$,
\begin{align*}
\biggl| &\rho_n^{-1} n^{i+\ip} \E \bigl\{  \gij_{n,t}(\Y_1, \Y_{12} \cup \Pn)\, \gipjp_{n,s}(\Y_2, \Y_{12} \cup \Pn) \\
&\qquad \qquad - \gij_{n,t}(\Y_1, \Y_1 \cup \Pn)\, \gipjp_{n,s} (\Y_2, \Y_2 \cup \Pnp) \bigr\}  \biggr| \\
&\quad \leq C^* \bigl( 2nf(R_ne_1) \bigr)^{i+\ip-(k+2)} \int_{(\bbr^d)^{i+\ip-1}} \hspace{-10pt} \hijipjp_{t,s} (0,\by)\,\one_{\Diip(t\vee s)}(0,\by)d\by
\end{align*}
for all $n \geq N$.
\end{lemma}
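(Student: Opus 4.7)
\textbf{Proof proposal for Lemma \ref{l:sparse.conv.upper.lemma}.} The three statements can be obtained by essentially repeating the bookkeeping carried out in the proof of Lemma \ref{l:conv.upper.lemma}, with the single structural change that the normalising sequence is now $\rho_n = n^{k+2}R_n^d f(R_ne_1)^{k+2}$ and that $nf(R_ne_1) \to 0$ rather than $\to \lambda$. The reduction is identical in all three parts: condition on the iid point sets $\Yp$ (or on $\Y_{12}$) to rewrite the expectations as integrals against $f^{\otimes i}$ (or $f^{\otimes(i+\ip)}$) of $\hij_{n,t}\hijp_{n,s}$ (or its two-cluster analogue) multiplied by the Poisson void probability $\exp\{-n\int_{\B(\cdot;\,s\vee t)} f(z)\,dz\}$; then perform the linear change of variables $x_1=x$, $x_\ell = x+y_{\ell-1}$, followed by polar coordinates $x=r\theta$ and the radial scaling $r = R_n\rho$, exactly as in \eqref{e:1st.change}--\eqref{e:polar2}. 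The prefactor that emerges is
\[
\rho_n^{-1} n^i\, R_n^d f(R_ne_1)^i \;=\; \bigl(nf(R_ne_1)\bigr)^{i-(k+2)},
\]
which equals $1$ precisely in the case $i=k+2$ relevant to part $(i)$.

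For part $(i)$, with $i=k+2$ and $j=\jp=1$, regular variation of $f$ yields the pointwise convergence of the $(k+2)$-fold product of density ratios to $\rho^{-\alpha(k+2)}$; furthermore, under the assumption $nf(R_ne_1)\to 0$, the Poisson void exponent is of order $nf(R_ne_1)\,\rho^{-\alpha}(s\vee t)^d\,\mathrm{vol}\bigl(\B(0,\by;1)\bigr)$, and therefore the exponential converges to $1$. Hence the integrand tends pointwise to $\rho^{d-1-\alpha(k+2)}\, h_t^{(k+2,1)}(0,\by)\, h_s^{(k+2,1)}(0,\by)$, whose integral against $s_{d-1}\,d\theta\,d\rho\,d\by$ on $S_{d-1}\times[1,\infty)\times(\bbr^d)^{k+1}$ is exactly $\mukt_k(t,s,0)$. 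The dominated convergence theorem applies by the usual Potter bound argument: choose $\xi\in(0,\alpha-d)$, so that \eqref{e:potter1}--\eqref{e:potter2} give a majorant of the form $C^{*}\rho^{d-1-\alpha+\xi}\,h_t^{(k+2,1)}(0,\by)\,h_s^{(k+2,1)}(0,\by)$, which is integrable because $\int_1^\infty \rho^{d-1-\alpha+\xi}d\rho<\infty$ and $h_{t}^{(k+2,1)}$ is supported in a bounded region.

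For part $(ii)$, one keeps the prefactor $(nf(R_ne_1))^{i-(k+2)}$ explicit. The Potter estimates \eqref{e:potter1}--\eqref{e:potter2} with $\xi\in(0,1)\cap(0,\alpha-d)$ give
\[
\frac{f(R_n\rho e_1)}{f(R_ne_1)}\prod_{\ell=1}^{i-1}\frac{f\bigl(R_n\|\rho\theta+y_\ell/R_n\|e_1\bigr)}{f(R_ne_1)}\,\one\{\|\rho\theta+y_\ell/R_n\|\geq 1\} \le (1+\xi)^i\,\rho^{-\alpha+\xi},
\]
and the void probability is bounded by $1$. Splitting $(1+\xi)^i = (1+\xi)^{k+2}(1+\xi)^{i-(k+2)}$ and using $1+\xi < 2$ absorbs the factor $(1+\xi)^{i-(k+2)}$ into $2^{\,i-(k+2)}$, yielding the claimed bound with a constant $C^* = C^{*}(1+\xi)^{k+2}\, s_{d-1}\int_1^\infty\rho^{d-1-\alpha+\xi}d\rho$ independent of $i,j,\jp$.

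For part $(iii)$, I would carry out verbatim the decomposition performed in the proof of Lemma \ref{l:conv.upper.lemma}$(iv)$: the quantity inside the absolute value is first rewritten as $\E\{C_n\}-\E\{D_n\}$, after which the crude bound $|\E\{C_n\}-\E\{D_n\}|\le 2\,\E\bigl\{\hij_{n,t}(\Y_1)\,\hipjp_{n,s}(\Y_2)\,\one_{\Diip(t\vee s)}(\Y_1,\Y_2)\bigr\}$ holds because the indicator and exponential factors appearing in $C_n$ and $D_n$ are each bounded by $\hij_{n,t}\hipjp_{n,s}\cdot\one_{\Diip(t\vee s)}$. Applying the same change of variables and Potter bound as in part $(ii)$ to $(i+\ip)$-fold density products yields the claim with the explicit factor $(2nf(R_ne_1))^{i+\ip-(k+2)}$. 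The only genuine obstacle in all three parts is the standard one of producing a Potter-type majorant whose exponential tail in $\rho$ compensates the divergent prefactor $\rho^{d-1}$; once $\xi$ is fixed with $\xi<\alpha-d$ and $\xi<1$, everything else is a direct transcription of the computations already done in Section \ref{s:proof.third.regime}.
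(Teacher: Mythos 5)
Your proposal is correct and follows exactly the route the paper intends: the paper omits this proof, stating only that it is analogous to Lemma \ref{l:conv.upper.lemma}, and your argument is precisely that analogue, with the correct identification of the prefactor $\rho_n^{-1}n^iR_n^df(R_ne_1)^i=(nf(R_ne_1))^{i-(k+2)}$, the exponent $\lambda=0$ arising from $nf(R_ne_1)\to0$ in the void probability, and the absorption of $(1+\xi)^{i-(k+2)}$ into the factor $2^{\,i-(k+2)}$. No gaps.
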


\begin{proof}[Proof of Lemma \ref{l:sparse.cov}]
As in the proof of Lemma \ref{l:giant.cov}, we may prove only the case $K=\infty$. Moreover, we compute only the limit of scaled covariance by $\rho_n$. Proceeding as in the proof of Lemma \ref{l:giant.cov}, one can write
\begin{align*}
\rho_n^{-1}\, &\text{Cov} \bigl\{ \beta_{k,n}(t), \beta_{k,n}(s) \bigr\}  \\
&= \sum_{i=k+2}^\infty \sum_{j, \jp \geq 1} j \jp\, \rho_n^{-1}\, \frac{ n^i}{i!}\, \E \bigl\{ \gij_{n,t}(\Yp, \Yp \cup \Pn)\, \gijp_{n,s}(\Yp, \Yp \cup \Pn) \bigr\} \\
&\qquad + \sum_{i, \ip=k+2}^\infty \sum_{j, \jp \geq 1} j \jp\, \rho_n^{-1}\, \frac{ n^{i+\ip}}{i!\, \ip !}\, \E \bigl\{  \gij_{n,t}(\Y_1, \Y_{12} \cup \Pn)\, \gipjp_{n,s}(\Y_2, \Y_{12} \cup \Pn)  \\
&\qquad \qquad \qquad \qquad \qquad \qquad \qquad  - \gij_{n,t}(\Y_1, \Y_1 \cup \Pn)\, \gipjp_{n,s} (\Y_2, \Y_2 \cup \Pnp) \bigr\}.
\end{align*}
By Lemma \ref{l:sparse.conv.upper.lemma} $(i) - (iii)$, it now suffices to show that, as $n\to \infty$,
$$
A_n := \sum_{i=k+3}^\infty \sum_{j, \jp \geq 1} j \jp\, \frac{\bigl(  2nf(R_ne_1) \bigr)^{i-(k+2)}}{i!}\, \int_{(\bbr^d)^{i-1}} \hij_t(0,\by)\, \hijp_s(0,\by) d\by \to 0,
$$
and
\begin{align*}
B_n := \sum_{i, \ip =k+2}^\infty \sum_{j, \jp \geq 1} j \jp\, &\frac{\bigl(  2nf(R_ne_1) \bigr)^{i+\ip-(k+2)}}{i!\, \ip !}\, \\
&\times \int_{(\bbr^d)^{i+\ip-1}} \hspace{-10pt} \hijipjp_{t,s} (0,\by)\,\one_{\Diip(t\vee s)}(0,\by)d\by
\to 0.
\end{align*}
It follows from Lemma \ref{l:geo.lemma} $(i)$ that
\begin{align*}
A_n &\leq \sum_{i=k+3}^\infty \frac{\bigl(  2nf(R_ne_1) \bigr)^{i-(k+2)}}{i!}\, \begin{pmatrix} i \\k+2 \end{pmatrix}^2 i^{i-2}
 \bigl( (t\vee s)^d \omega_d \bigr)^{i-1} \\
&\leq C^* \sum_{i=k+3}^\infty i^{2k+2} \bigl(  2nf(R_ne_1) (t\vee s)^d e\omega_d \bigr)^{i-(k+2)}  \\
&\to 0 \ \ \text{as } n\to\infty,
\end{align*}
where the last convergence is obtained by $nf(R_ne_1) \to 0$, $n\to\infty$.  \\
Similarly, by Lemma \ref{l:geo.lemma} $(ii)$,
\begin{align*}
B_n &\leq \sum_{i, \ip =k+2}^\infty \frac{\bigl(  2nf(R_ne_1) \bigr)^{i+\ip-(k+2)}}{i!\, \ip !}\,  2^d \begin{pmatrix} i \\ k+2 \end{pmatrix} \begin{pmatrix} \ip \\ k+2 \end{pmatrix} i^{i-1} (\ip)^{\ip -1} \bigl( (t\vee s)^d \omega_d \bigr)^{i+\ip-1} \\
&\leq C^* \sum_{i, \ip =k+2}^\infty i^{k+1} (\ip)^{k+1} \bigl(  2nf(R_ne_1) (t\vee s)^d e\omega_d \bigr)^{i+\ip-(k+2)}  \\
&\to 0 \ \ \text{as } n\to \infty.
\end{align*}
\end{proof}
The next lemma claims the FCLT for the integral process associated with the truncated $k$-th Betti number  \eqref{e:truncated.betti}. The proof is almost the same as that of Lemma \ref{l:giant.truncated.clt}, and therefore, we do not state it here. It is then straightforward to complete the proof of Theorem \ref{t:sparse.fclt} by combining Lemma \ref{l:sparse.truncated.clt} and Theorem 3.2 in \cite{billingsley:1999}, as in the last subsection.
\begin{lemma}  \label{l:sparse.truncated.clt}
For every $M\geq k+2$, we have, as $n\to\infty$,
$$
\rho_n^{-1/2} \int_0^t \Bigl( \beta_{k,n}^{(M)}(s) - \E \bigl\{ \beta_{k,n}^{(M)}(s) \bigr\} \Bigr)ds \Rightarrow \int_0^t Y_k(s)\, ds \ \ \text{in } C[0,\infty).
$$
\end{lemma}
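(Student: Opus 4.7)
I propose to mirror the proof of Lemma \ref{l:giant.truncated.clt} almost step for step, with the key modification that the variance scaling is $\rho_n = n^{k+2}R_n^d f(R_n e_1)^{k+2}$ rather than $R_n^d$, and that the additional hypothesis $nf(R_n e_1)\to 0$ suppresses all contributions except those coming from connected components on exactly $k+2$ vertices. Throughout the plan, let $X_n(t) := \rho_n^{-1/2}\int_0^t\bigl( \beta_{k,n}^{(M)}(s) - \E \{\beta_{k,n}^{(M)}(s)\}\bigr)ds$.

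\textbf{Step 1 (Finite-dimensional convergence via Cram\'er-Wold and spatial truncation).} Fix $a_1,\dots,a_m\in\bbr$ and $0\leq t_1<\cdots<t_m$. For $K\geq 1$, decompose the linear combination $\sum_{p}a_p\int_0^{t_p}\beta_{k,n}^{(M)}(s)ds$ into $T_n^{(M)}(K)+U_n^{(M)}(K)$, where $T_n^{(M)}(K)$ keeps only clusters with $\mathrm{Max}(\Y)\in\mathrm{Ann}(R_n,KR_n)$. Lemma \ref{l:sparse.cov} gives $\rho_n^{-1}\mathrm{Var}\{T_n^{(M)}(K)\}\to \sum_{p,q}a_pa_q\int_0^{t_p}\int_0^{t_q}\mu_k^{(k+2,1,1)}(u,v,0;K)/(k+2)!\,du\,dv$, and letting $K\to\infty$ recovers the covariance of $\int_0^t Y_k(s)ds$ against the same linear combination. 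The standard approximation argument (cf.\ p.~64 of \cite{penrose:2003}) then reduces everything to showing, for each fixed $K$, a CLT for the centered and normalized $T_n^{(M)}(K)$.

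\textbf{Step 2 (Stein's method on a dependency graph).} Cover $\bbr^d$ by unit cubes $(Q_\ell)$, let $V_n=\{\ell:Q_\ell\cap\mathrm{Ann}(R_n,KR_n)\neq\emptyset\}$, so $|V_n|\leq C^* R_n^d$, and write $T_n^{(M)}(K)=\sum_{\ell\in V_n}\xi_{\ell,n}$ where $\xi_{\ell,n}$ collects the contribution of clusters with $\mathrm{Max}(\Y)\in Q_\ell$. Declare $\ell\sim\ell'$ iff $\mathrm{dist}(Q_\ell,Q_{\ell'})\leq 2Mt_m$; this yields a dependency graph for $(\xi_{\ell,n})$. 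By Theorem 2.4 in \cite{penrose:2003}, it suffices to prove
\[
R_n^d\,\max_{\ell\in V_n}\frac{\E|\xi_{\ell,n}-\E\xi_{\ell,n}|^p}{(\mathrm{Var}\, T_n^{(M)}(K))^{p/2}}\to 0,\qquad p=3,4.
\]
Let $Z_{\ell,n}$ be the Poisson count of $\Pn$ in $\mathrm{Tube}(Q_\ell;Mt_m)$; by Potter's bound its mean is $\leq C^* nf(R_n e_1)$. Since $\xi_{\ell,n}$ vanishes unless $Z_{\ell,n}\geq k+2$ and is crudely bounded by a polynomial of degree $\leq 2M$ in $Z_{\ell,n}$, a direct Poisson computation gives $\E|\xi_{\ell,n}|^p\leq C^*(nf(R_n e_1))^{k+2}=C^*\rho_n/R_n^d$. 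Combined with $\mathrm{Var}\,T_n^{(M)}(K)\asymp\rho_n$, the ratio is bounded by $R_n^d\cdot(\rho_n/R_n^d)/\rho_n^{p/2}=\rho_n^{1-p/2}\to 0$ since $\rho_n\to\infty$.

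\textbf{Step 3 (Tightness in $C[0,\infty)$).} For $0\leq S\leq T\leq L$, apply Cauchy--Schwarz to get
\[
\E\bigl\{(X_n(T)-X_n(S))^2\bigr\}\leq (T-S)\int_S^T \rho_n^{-1}\mathrm{Var}\bigl\{\beta_{k,n}^{(M)}(t)-\beta_{k,n}^{(M)}(S)\bigr\}dt.
\]
Decomposing this variance through the Palm theory as in the proof of Lemma \ref{l:sparse.cov}, Lemma \ref{l:sparse.conv.upper.lemma}$(ii),(iii)$ combined with Lemma \ref{l:geo.lemma} provides integrand bounds that are uniformly $O(1)$ in $n$ and $s,t$; a bit more care (restricting to $h_{t,S}^{(i,j)}$-type increments as in Lemma \ref{l:tightness.lemma}) yields the bound $C^*(T-S)^2$, from which tightness follows via Theorem 12.3 in \cite{billingsley:1968}.

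\textbf{Main obstacle.} The delicate point, and the one deserving the greatest care, is the moment bound in Step 2: one must use the hypothesis $nf(R_n e_1)\to 0$ of the second regime to show that the local variables $\xi_{\ell,n}$ are of size $O(\rho_n/R_n^d)$ (so that the CLT normalization works out), while simultaneously the spatial truncation/$K\to\infty$ exchange in Step 1 relies on the tail estimates already embedded in Lemma \ref{l:sparse.conv.upper.lemma}. Everything else is essentially a translation of the third-regime argument, with the simplification that only the $i=k+2$, $j=1$ Gaussian coordinate survives in the limit.
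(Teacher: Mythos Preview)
Your proposal is correct and follows exactly the route the paper has in mind: the paper's entire ``proof'' of Lemma~\ref{l:sparse.truncated.clt} is the sentence ``The proof is almost the same as that of Lemma~\ref{l:giant.truncated.clt}, and therefore, we do not state it here,'' and you have faithfully reproduced that argument with the necessary changes. In particular, your Step~2 pinpoints the one place where a genuine modification is required---the refined local moment bound $\E|\xi_{\ell,n}|^p \le C^*(nf(R_ne_1))^{k+2}=C^*\rho_n/R_n^d$ exploiting $nf(R_ne_1)\to 0$---which the paper does not spell out; the naive constant bound from the third-regime proof would \emph{not} suffice here since $R_n^d/\rho_n^{p/2}$ need not vanish. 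One minor slip: in Step~3 the Cauchy--Schwarz bound should read $(T-S)\int_S^T\rho_n^{-1}\mathrm{Var}\{\beta_{k,n}^{(M)}(s)\}\,ds$ (not the variance of an increment), but this is harmless and in fact the paper's version simply expands $\E\{(X_n(T)-X_n(S))^2\}$ as a double covariance integral and bounds the integrand uniformly via Lemma~\ref{l:sparse.conv.upper.lemma} and Lemma~\ref{l:geo.lemma}, giving $C^*(T-S)^2$ directly without any appeal to increment estimates of the $h_{t,S}$ type.
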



\end{document}